\newtheorem{thm}{Theorem}
\newtheorem{lem}{Lemma}
\newtheorem{prop}{Proposition}
\newtheorem{defin}{Definition}
\newtheorem*{prob14}{Problem 14}
\newcommand{\m}{\mathbf}
\newcommand{\bl}{\bullet}
\newcommand{\p}{\partial}
\def\d{\,{\rm{d}}}
\def\k{\mathbf{k}}
\def\v{\,{\varsigma}}
\title[Rational projective plane flows. II]
{The projective translation equation and rational plane flows. II. Corrections and additions}
\author[G. Alkauskas]{Giedrius Alkauskas}
\address{Vilnius University, Department of Mathematics and Informatics, Naugarduko 24, LT-03225 Vilnius, Lithuania}
\email{giedrius.alkauskas@mif.vu.lt}
\newcounter{exam}\setcounter{exam}{0}
\par\noindent{{\bf Note~\thenoteno}.}}%
\newenvironment{Example}%
	{\refstepcounter{exam}%
	\medbreak\par\noindent{{\bf Example~\theexam}.}}%
	{\hfill{$\Box$}
	\par\medbreak}
\begin{document}
\begin{abstract}In this second part of the work, we correct the flaw which was left in the proof of the main Theorem in the first part. This affects only a small part of the text in this first part and two consecutive papers. Yet, some additional arguments are needed to claim the validity of the classification results.\\
\indent With these new results in a disposition, algebraic and rational flows can be much more easily and transparently classified. It also turns out that the notion of an algebraic projective flow is a very natural one. For example, we give an inductive (on dimension) method to build algebraic projective flows with rational vector fields, and ask whether these account for all such flows.\\
\indent Further, we expand on results concerning rational flows in dimension $2$. Previously we found all such flows symmetric with respect to a linear involution $i_{0}(x,y)=(y,x)$. Here we find all rational flows symmetric with respect to a non-linear $1$-homogeneous involution $i(x,y)=(\frac{y^2}{x},y)$. We also find all solenoidal rational flows. Up to linear conjugation, there appears to be exactly two non-trivial examples.
\end{abstract}

\date{November 29, 2016}
\subjclass[2010]{Primary 39B12, 14E07. Secondary 35F05, 37E35.}
\keywords{Translation equation, flow, projective geometry, rational functions, rational vector fields, iterable functions, birational transformations, involutions, Cremona group, linear ODE, linear PDE}
\thanks{The research of the author was supported by the Research Council of Lithuania grant No. MIP-072/2015.}

\maketitle

\section{Rational vector fields} 
\subsection{Introduction}
\label{intro}
Let $\m{x}=(x,y)$, which is denoted by $x\bl y$. The main Theorem in \cite{alkauskas} claims the following.
\begin{thm}
Let $\phi(x,y)=u(x,y)\bl v(x,y)$ be a pair of rational functions in $\mathbb{R}(x,y)$ which satisfies the functional equation 
\begin{eqnarray}
(1-z)\phi(\m{x})=\phi\Big{(}\phi(\m{x}z)\frac{1-z}{z}\Big{)},\quad z\in\mathbb{R},\label{funk}
\end{eqnarray}
and the boundary conditions
\begin{eqnarray}
\lim\limits_{z\rightarrow 0}\frac{u(xz,yz)}{z}=x,\quad 
\lim\limits_{z\rightarrow 0}\frac{v(xz,yz)}{z}=y.
\label{bound}
\end{eqnarray}
 Assume that $\phi(\m{x})\neq \phi_{{\rm id}}(\m{x}):=x\bl y$.
Then there exists an integer $N\geq 0$, which  is the basic invariant of the flow, called \emph{the level}. Such a flow $\phi(x,y)$ can be given by
\begin{eqnarray*}
\phi(x,y)=\ell^{-1}\circ\phi_{N}\circ\ell(x,y),
\end{eqnarray*}
where $\ell$ is a $1-$BIR ($1$-homogeneous birational plane transformation), and $\phi_{N}$ is the canonical solution of level $N$ given by
\begin{eqnarray}
\phi_{N}(x,y)=x(y+1)^{N-1}\bl \frac{y}{y+1}.\label{can}
\end{eqnarray}
\label{mthm}
\end{thm}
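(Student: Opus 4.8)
The plan is to translate the problem into the language of vector fields, where the $1$-BIR conjugation becomes transparent. First I would introduce the one-parameter family $\phi^{[\lambda]}(\m{x}):=\frac{1}{\lambda}\phi(\lambda\m{x})$ and check directly that (\ref{funk}) is equivalent to the group law $\phi^{[\lambda_1]}\circ\phi^{[\lambda_2]}=\phi^{[\lambda_1+\lambda_2]}$, so that $\phi=\phi^{[1]}$ is the time-$1$ map of a genuine flow $\Phi_\lambda:=\phi^{[\lambda]}$ with $\Phi_0=\mathrm{id}$. By (\ref{bound}) the linear part of $\phi$ is the identity, whence its $2$-homogeneous (quadratic) part $\varpi=(P,Q)$ is exactly the generating field: $\dot\Phi_\lambda=\varpi(\Phi_\lambda)$. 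Since $\phi$ is rational, $\varpi$ is a pair of $2$-homogeneous rational functions, and conversely any such $\varpi$ whose time-$1$ map is rational returns a solution of (\ref{funk}). This reduces the theorem to a normal-form statement for $2$-homogeneous rational vector fields under $1$-BIR conjugation.

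Next I would record how a $1$-BIR acts. Because $\ell$ is $1$-homogeneous and birational, it carries lines through the origin bijectively to lines through the origin, so it descends to a \emph{M\"obius} self-map of the projective line $\mathbb{P}^1$ with coordinate $w=y/x$; correspondingly the push-forward $\ell_\ast\varpi=(D\ell\cdot\varpi)\circ\ell^{-1}$ is again $2$-homogeneous. Projecting out the radial direction, $\varpi$ induces the rational field $\frac{dw}{d\tau}=g(w)$ with $g$ essentially $(Qx-Py)/x^2$ rewritten in $w$. For the canonical field of $\phi_N$, namely $\varpi_N=((N-1)xy,\,-y^2)$, one computes $yP-xQ=Nxy^2$, hence $g(w)\propto -Nw^2$: a single double zero. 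I would then show that $N$ — an integer because rationality of $x(y+1)^{N-1}$ forces $N-1\in\mathbb{Z}$ — is invariant under the M\"obius action together with the time-$1$ normalization, and that it is the promised level.

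The heart of the argument is the reduction itself: from the $\mathbb{P}^1$-field $g$ of an arbitrary rational flow, produce a M\"obius map (lifted to a $1$-BIR $\ell$) straightening $g$ to $-Nw^2$, integrate the straightened field to recover $\phi_N(x,y)=x(y+1)^{N-1}\bl\frac{y}{y+1}$ explicitly, and conclude $\phi=\ell^{-1}\circ\phi_N\circ\ell$. The integration is routine once the vector field is in canonical form, so the weight of the theorem rests entirely on the straightening.

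I expect the decisive obstacle — and the likely location of the flaw being corrected — to be the claim that \emph{every} rational flow straightens to this single-double-zero model. A M\"obius change of coordinate preserves the number and multiplicities of the zeros of $g$, so one must argue that rationality of the full flow forbids the generic configuration of two distinct simple zeros. Yet the diagonal fields $\varpi=(ax^2,by^2)$ generate the perfectly rational flow $\bigl(\frac{x}{1-ax},\frac{y}{1-by}\bigr)$, whose induced field $g(w)\propto w(bw-a)$ has \emph{two} simple zeros at $w=0$ and $w=a/b$ and therefore cannot be M\"obius-conjugated to $-Nw^2$. Confirming that such configurations either genuinely cannot arise, or else must be adjoined as additional canonical forms, and verifying that the straightening map can always be chosen $1$-homogeneous \emph{and} birational rather than merely birational, is the delicate point on which the completeness of the classification — and hence the validity of the theorem as stated — ultimately depends.
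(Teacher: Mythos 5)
Your reduction to vector fields and the group law $\phi^{[\lambda_1]}\circ\phi^{[\lambda_2]}=\phi^{[\lambda_1+\lambda_2]}$ matches the paper's setup, but the proposal breaks down at exactly the step you yourself call ``the heart of the argument'', and the analysis you offer of that step rests on a false premise. A $1$-BIR of the form (\ref{bir}), $\ell_{P,Q}(x,y)=\frac{xP}{Q}\bl\frac{yP}{Q}$, sends each point to a scalar multiple of itself, so it fixes every line through the origin and acts as the \emph{identity} on the projective line; only the linear part of a general $1$-BIR acts by M\"obius maps. Consequently the zero configuration of your $g$ (equivalently, of $x\varrho-y\varpi$) is \emph{not} an invariant of $1$-BIR conjugation: by Proposition \ref{conjug}, conjugating by $\ell_{P,Q}$ multiplies $x\varrho-y\varpi$ by the $0$-homogeneous rational function $A=P/Q$, whose zeros and poles can merge, split, or cancel roots at will. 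Your purported obstruction is a case in point: for $\varpi\bl\varrho=x^2\bl y^2$ one has $x\varrho-y\varpi=xy(y-x)$, and multiplication by the admissible factor $A=-y/(y-x)$ produces $-xy^2$, the double-root configuration of the canonical level-$1$ field; the paper states explicitly that $\frac{x}{1-x}\bl\frac{y}{1-y}$ is a level-$1$ rational flow covered by the theorem. So there is no extra canonical form to adjoin --- but neither does your framework produce the straightening map, because the quantity you propose to match is not preserved by the transformations you are allowed to use.

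The paper's corrected proof finds the conjugating function $A$ by a mechanism your proposal does not touch. One first shows (Theorem \ref{thm1}) that the flow satisfies the implicit system (\ref{impl}) built from a solution $f$ of the fundamental ODE (\ref{f-eq}); for an algebraic (in particular rational) flow, the relation (\ref{mono}) together with (\ref{not-constant}) forces $f$ to be algebraic; Lemma \ref{bas-lem} then shows the general solution has the form $r(x)+\sigma q^{1/N}(x)$ with $r,q$ rational, so a \emph{rational} solution $f_{1}$ exists; taking $A(x,y)=f_{1}(x/y)$ in (\ref{vecconj}) transforms $\varrho$ into $-y^2$, i.e.\ puts the flow into univariate form $\mathcal{U}(x,y)\bl\frac{y}{y+1}$; finally the first integral $\mathscr{W}$ and equation (\ref{eq-w}) identify every rational flow in univariate form with a $1$-BIR conjugate of $\phi_{N}$. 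Without the ODE (\ref{f-eq}) and the rationality dichotomy of Lemma \ref{bas-lem}, the existence of the required $1$-BIR --- which is the entire content of the theorem beyond the routine integration of the canonical field --- is left unproved in your proposal.
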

To prove this and all other classification Theorems  in \cite{alkauskas, alkauskas-un, alkauskas-ab}, many ingredients are needed. One of the steps is the algorithm for reducing rational vector fields. Though all the rest ingredients are mathematically correct, this one contains a flaw.\\

Therefore, we accentuate the following:
\begin{itemize}
\item[i)]In \cite{alkauskas}, Section 4, all steps II. through VI. are valid, except for the step I. which contains a logical flaw. 
\item[ii)] All the special examples of projective flows in \cite{alkauskas, alkauskas-un, alkauskas-ab}, which make the majority of the text, are valid. This includes rational, algebraic, unramified, abelian flows, and one non-abelian flow with a vector field $x^2+xy+y^2\bl xy+y^2$; another class of non-abelian flows was named \emph{pseudo-flows of level $0$} and was described in \cite{alkauskas}, Subsection 5.2. Open directions in the investigations of the projective translation equation, indicated in (\cite{alkauskas-un}, Sect. 5.1.) and which were expanded in \cite{alkauskas-comm, alkauskas-super1, alkauskas-super2, alkauskas-super3}, and in this paper, too, remain valid. Note also that the flow with the vector field $x^2-xy\bl y^2-2xy$, as given by (\cite{alkauskas-un}, Theorem 1) is indeed unramified. This vector field is given an extensive treatment in the Appendix of \cite{alkauskas-ab}.
\item[iii)] Four classification Theorems (Theorem in \cite{alkauskas}, Theorem 1 in \cite{alkauskas-un}, Theorem 2 and Theorem 3 in \cite{alkauskas-ab}) remain valid as stated, if we additionally assume that with a help of a $1$-BIR a rational vector field can be reduced to a pair of quadratic forms, or to a vector field when one of the components vanishes. As it turns out now, the last happens exactly for abelian flows of level $1$; see Definition \ref{defin-ab}.     
\item[iv)] After corrections and additions of the current paper, we can say the following. Classification of rational plane flows, that is, Theorem \ref{mthm}, remains correct as stated. The variety of algebraic, abelian and (possibly) unramified flows is bigger, if the vector field cannot be reduced to a pair of quadratic forms or to a vector field when one of the components vanishes.
\item[v)] With much more tools at hand, some steps in proving classification theorems can be significantly simplified. 
\item[vi)] The flaw was a fortunate one, since it forced to work on Step III. (\cite{alkauskas}, Subsection 4.3). See Subsections \ref{cor-rat} and \ref{cor-alg} of this paper where it is shown that with the current approach, steps II. through IV. in \cite{alkauskas}, as far as rational flows are concerned, are superfluous.  However, during work on Step III. we discovered an amazing vector field $x^2-2xy\bl y^2-2xy$. It gave rise to two wide areas of research, both new in the framework of differential geometry - the theory of projective superflows, and the theory of unramified projective flows. The first one was already significantly andvanced in case of linear groups $O(2)$, $O(3)$ and $U(2)$, see \cite{alkauskas-super1,alkauskas-super2, alkauskas-super3}. For example, there exists exactly $5$ superflows over $\mathbb{R}$ in dimension $3$: tetrahedral (symmetry group is of order $24$), octahedral ($24$), icosahedral ($60$), $4$-antiprismal ($16$), and $3$-prismal ($12$). The theory of unramified flows was begun in \cite{alkauskas-un}, and is under development in \cite{alkauskas-un2}.
\item[vii)]Finally, now the case of rational flows in dimension $3$ looks much more tractable and optimistic, if we manage to find and analogue of Theorem \ref{thm1} (see Subection \ref{fundament}) and prove everything by the same ``bootstrapping" method; see Subsection \ref{induct} for an inductive construction of algebraic or rational flows in any dimension.  
\end{itemize} 
\subsection{Transformation of the vector field}
Let $\varpi\bl\varrho$  be a pair of $2-$homogeneous rational functions. Let $P(x,y)$ and $Q(x,y)$ be two homogeneous polynomials of degree $d\geq 0$. Then
\begin{eqnarray}
\quad\ell_{P,Q}(x,y)=\frac{xP(x,y)}{Q(x,y)}\bl\frac{yP(x,y)}{Q(x,y)}
\label{bir}
\end{eqnarray}
is a $1$-homogeneous birational plane transformation ($1$-BIR for short) whose inverse is $\ell_{Q,P}$. If $\phi$ is a projective flow, so is $\ell^{-1}\circ\phi\circ\ell$. Also, if $T$ is a non-degenerate linear map, then $T^{-1}\circ\phi\circ T$ is a projective flow. The transformation of the vector field under these two basic transformations are given by (\cite{alkauskas}, Proposition 8), which we reproduce here. Let $v(\phi)$ stand for a vector field of a flow $\phi$. 
\begin{prop}
\label{prop1}
Suppose, $L$ is a non-degenerate linear map, and $\phi(\m{x})$ is a flow. Then
\begin{eqnarray*}
v\Big{(}L^{-1}\circ\phi\circ L;\m{x}\Big{)}=L^{-1}\circ v(\phi;\m{x})\circ L.
\end{eqnarray*}
Further, let a birational map $\ell_{P,Q}$ be given by (\ref{bir}), $A(x,y)=P(x,y)Q^{-1}(x,y)$, which is a $0$-homogeneous function. Suppose that $v(\phi,\m{x})=\varpi(x,y)\bl \varrho(x,y)$. Then
\begin{eqnarray}
v(\ell^{-1}_{P,Q}\circ\phi\circ\ell_{P,Q};\m{x})&=&\nonumber\\
\varpi'(x,y)\bl\varrho'(x,y)&=&\nonumber\\
A(x,y)\varpi(x,y)-A_{y}[x\varrho(x,y)-y\varpi(x,y)]&\bl&\label{vecconj}\\
A(x,y)\varrho(x,y)+A_{x}[x\varrho(x,y)-y\varpi(x,y)].&&\nonumber
\end{eqnarray}
As a corollary,
\begin{eqnarray*}
x\varrho'(x,y)-y\varpi'(x,y)=A(x,y)[x\varrho(x,y)-y\varpi(x,y)].
\end{eqnarray*} 
 \label{conjug}
\end{prop}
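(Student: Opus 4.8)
The plan is to work directly from the characterization of the vector field as the second-order coefficient in the Taylor expansion of the one-parameter curve $z\mapsto\phi(\m{x}z)$. Concretely, the boundary conditions (\ref{bound}) amount to
\begin{eqnarray*}
\phi(\m{x}z)=\m{x}z+v(\phi;\m{x})\,z^{2}+O(z^{3}),\qquad v(\phi;\m{x})=\varpi(x,y)\bl\varrho(x,y),
\end{eqnarray*}
where $\varpi,\varrho$ are $2$-homogeneous; this is an identity in $\m{x}$, so I may freely substitute other points for $\m{x}$. Both asserted formulas will then be obtained by composing three such expansions and reading off the coefficient of $z^{2}$.

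For the linear statement, since $L$ commutes with the scaling $\m{x}\mapsto\m{x}z$, I would write $(L^{-1}\circ\phi\circ L)(\m{x}z)=L^{-1}\big(\phi((L\m{x})z)\big)$ and expand the inner flow at the point $L\m{x}$. Applying the linear map $L^{-1}$ term by term immediately yields $\m{x}z+L^{-1}v(\phi;L\m{x})\,z^{2}+O(z^{3})$, which is exactly $v(L^{-1}\circ\phi\circ L;\m{x})=L^{-1}\circ v(\phi;\m{x})\circ L$.

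The birational case is the substantive one. Writing $A=P/Q$ (a $0$-homogeneous function) and $B=Q/P=1/A$, the map $\ell=\ell_{P,Q}$ sends $\m{x}\mapsto(xA\bl yA)$ and, being $1$-homogeneous, satisfies $\ell(\m{x}z)=z\,\ell(\m{x})$. First I would set $\m{X}=\ell(\m{x})=(xA,yA)$ and expand $\phi(\m{X}z)=\m{X}z+v(\phi;\m{X})\,z^{2}+O(z^{3})$. Then I would apply $\ell^{-1}=\ell_{Q,P}$, i.e. multiply each coordinate of this curve by $B$ evaluated along the curve, and Taylor-expand $B$ to first order in $z$ about $\m{X}$. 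Reading off the coefficient of $z^{2}$ gives, for the first component,
\begin{eqnarray*}
\varpi'(x,y)=w_{1}B(\m{X})+X\big(w_{1}B_{X}(\m{X})+w_{2}B_{Y}(\m{X})\big),
\end{eqnarray*}
with $(X,Y)=(xA,yA)$ and $(w_{1},w_{2})=v(\phi;\m{X})$, and the analogous expression for $\varrho'$. The bookkeeping now rests entirely on homogeneity: $\varpi(\m{X})=A^{2}\varpi(x,y)$ and $\varrho(\m{X})=A^{2}\varrho(x,y)$ since $\varpi,\varrho$ are $2$-homogeneous; $B(\m{X})=1/A$; while the two first partials of $B$ evaluated at $\m{X}$ equal $-A_{x}/A^{3}$ and $-A_{y}/A^{3}$, because $B=1/A$ and the first partials of the $0$-homogeneous $A$ are $(-1)$-homogeneous. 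Substituting, the first component collapses to $A\varpi-x\varpi A_{x}-x\varrho A_{y}$.

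The step I expect to be the crux is reconciling this with the stated symmetric form $A\varpi-A_{y}[x\varrho-y\varpi]$. The two differ by $-x\varpi A_{x}$ versus $+y\varpi A_{y}$, and they agree precisely because Euler's identity for the $0$-homogeneous $A$ gives $xA_{x}+yA_{y}=0$; the same identity fixes the $\varrho'$ component. Finally, the corollary follows with no new work: forming $x\varrho'-y\varpi'$ from the two formulas, the derivative terms combine into $(x\varrho-y\varpi)(xA_{x}+yA_{y})$, which vanishes by Euler, leaving $A(x,y)[x\varrho-y\varpi]$. Thus the only genuinely delicate points are the correct two-fold composition of Taylor expansions and the systematic use of the homogeneity degrees together with Euler's relation.
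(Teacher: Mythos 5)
Your proposal is correct and follows essentially the same route as the paper's own verification: expand the conjugated flow $\ell^{-1}\circ\phi\circ\ell(\m{x}z)$ to second order in $z$, use the $0$-homogeneity of $A$ (the paper divides by $A(U,V)$ where you multiply by $B=1/A$ evaluated along the curve, which is the same computation), and invoke Euler's identity $xA_{x}+yA_{y}=0$ to pass to the symmetric form and to the corollary. No gaps.
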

\begin{proof}In \cite{alkauskas}, a direct proof was given. This result was used essentially in \cite{alkauskas,alkauskas-un,alkauskas-ab, alkauskas-comm}. We can double-verify this crucial formula in the following alternative way. We know that for $z$ sufficiently small,
\begin{eqnarray*}
\frac{u(xz,yz)}{z}&=&x+\sum\limits_{i=2}^{\infty}z^{i-1}\varpi^{(i)}(x,y),\\
\frac{v(xz,yz)}{z}&=&y+\sum\limits_{i=2}^{\infty}z^{i-1}\varrho^{(i)}(x,y),
\end{eqnarray*}
where $\varpi^{(2)}=\varpi$, $\varrho^{(2)}=\varpi$, and $\varpi^{(i)}$, $\varrho^{(i)}$, $i\geq 3$ are given recurrently (see \cite{alkauskas}, Subsection 3.2). The above expansion and the recurrsion itself are essentially equivalent to the system (\ref{PDE-u}). Now, if $A=A(x,y)$, we have (recall that $A$ is $0$-homogeneous):
\begin{eqnarray*}
U&:=&u\big{(}xzA(x,y),yzA(x,y)\big{)}=xzA+A^{2}\varpi z^2+\text{higher terms},\\
V&:=&v\big{(}xzA(x,y),yzA(x,y)\big{)}=yzA+A^{2}\varrho z^2+\text{higher terms},\\
A(U,V)&=&A+A(A_{x}\varpi+A_{y}\varrho)z+\text{higher terms}.
\end{eqnarray*} 
So,
\begin{eqnarray*}
\frac{U}{zA(U,V)}\text{ (mod }z^2)=\frac{x+A\varpi z}{1+(A_{x}\varpi+A_{y}\varrho)z}\text{ (mod }z^2)=x+(A\varpi-xA_{x}\varpi-xA_{y}\varrho)z,\\
\frac{V}{zA(U,V)}\text{ (mod }z^2)=\frac{y+A\varrho z}{1+(A_{x}\varpi+A_{y}\varrho)z}\text{ (mod }z^2)=y+(A\varrho-yA_{x}\varpi-yA_{y}\varrho)z,
\end{eqnarray*}
and Euler's identity for a $0$-homogeneous function $A$, that is, $xA_{x}+yA_{y}=0$, gives again the desired identities. 
\end{proof}
Note that the orbits of the flow are given by $\mathscr{W}(x,y)=\mathrm{const}.$ Here $\mathscr{W}$ is a $1$-homogeneous function, a solution to
\begin{eqnarray}
\mathscr{W}(x,y)\varrho(x,y)+\mathscr{W}_{x}(x,y)[y\varpi(x,y)-x\varrho(x,y)]=0.\label{w-eq}
\end{eqnarray}
\begin{defin}
If there exist a positive integer $N$ such  that $\mathscr{W}^{N}(x,y)$ is a rational function, such a flow is called \emph{an abelian flow}, and such minimal $N$ is called its \emph{level}.
\label{defin-ab} 
\end{defin}
Rational and algebraic flows are always abelian flows; this is obvious, but it will follow from Theorem \ref{thm1}.
\subsection{Fundamental ODE}In particular, suppose that the transformation $\ell(x,y)=xA(x,y)\bl yA(x,y)$ (where now $A$ is an arbitrary $0$-homogeneous function, not necessarily rational) transforms the second coordinate of the vector field into $\varpi'(x,y)=y^2$. If we put $A(x,1)=f(x)$, $A(x,y)=f(\frac{x}{y})$, $\varpi(x)=\varpi(x,1)$, $\varrho(x)=\varrho(x,1)$, this, minding (\ref{vecconj}), gives the differential equation for $f$ as follows:
\begin{eqnarray}
f(x)\varrho(x)+f'(x)(x\varrho(x)-\varpi(x))=1.\label{f-eq}
\end{eqnarray}
So, the general solution to (\ref{f-eq}) is given by $f_{1}(x)+\frac{\sigma}{\mathscr{W}(x,1)}$, where $f_{1}$ is any special solution; see Section \ref{sec-2}. 
Note that this equation appeared in \cite{alkauskas} as the equation (33), only with a ``$-1$" instead of ``$+1$", which corresponds to the fact that the second coordinate of a vector field is transformed into $-y^2$, and the second coordinate of the flow itself is transformed into $\frac{y}{y+1}$. This choice of a sign is of course, inessential. The existence of rational solution to this ODE for rational projective flows was the consequence of reduction algorithm, which, as we will shortly explain, contains a serious flaw. However, while working on paper \cite{alkauskas-ab} it emerged that (\ref{f-eq}) plays a much deeper role in describing the flow itself. For example, Theorem \ref{thm1} immediately implies that for algebraic flow (and for its special case, rational flow), all solutions to this ODE are algebraic. Moreover, exploring deeper we get that there exists \emph{at least one} rational solution. We can be more precise: \emph{a posteriori}, for algebraic flows there exists exactly one rational solution to the ODE (\ref{f-eq}) if and only if the rational or algebraic flow is of level $N\geq 2$. I would like the reader to pay attention to (\cite{alkauskas}, Sect. 2.3, p. 286). One of the sentences reads as follows. 
\begin{itemize}
\item[$\bigstar$]{\it ``For level $N\geq 2$ flows, the corresponding differential equation} (that is, (\ref{f-eq})) {\it has a solution of the form $f_{1}(x)+\sigma f_{0}(x)$, where $f_{1}$ is rational while $f_{0}$ is algebraic but not rational".}
\end{itemize}
 We will soon see that this can be proved without appealing to reduction of vector fields, so all the results about rational flows, except those in (\cite{alkauskas}, Sect. 4.1), remain valid.\\
 
 If $x\varrho-y\varpi=0$, then $\varpi\bl\varrho=xJ(x,y)\bl y J(x,y)$, where $J(x,y)$ is a $1$-homogeneous rational function. This gives rise to a rational flow of level $0$. This was explored in (\cite{alkauskas}, Sect. 2.2). The vector field can be easily integrated, and it gives rational projective flow $\frac{x}{1-J(x,y)}\bl \frac{y}{1-J(x,y)}$. Henceforth we assume $x\varrho-y\varpi\neq 0$.\\
 
 If this is the case, we will need one fact. Let $u\bl v$ is any projective flow with a rational vector field. Suppose that for every pair $(x,y)$, the function $\frac{u(xz,yz)}{v(xz,yz)}$, as a function in $z$, is constant. Then
\begin{eqnarray*}
\frac{u(xz,yz)}{v(xz,yz)}=\gamma(x,y),\quad z\in\mathbb{R}.
\end{eqnarray*} 
Now, take the limit $z\rightarrow 0$. This gives $\gamma(x,y)=\frac{x}{y}$, $yu(xz,yz)=xv(xz,yz)$, and this implies $x\varrho-y\varpi=0$ - a contradiction. Thus,
\begin{eqnarray}
\text{For some pair }(x,y), \frac{u(xz,yz)}{v(xz,yz)}
\text{ is not a constant function in }z.
\label{not-constant}
\end{eqnarray}
 
\subsection{The flaw}The logical mistake is contained in (\cite{alkauskas}, Subsection 4.1 Step I, p. 301):
\begin{itemize}
\item[$\bigstar$]{\it ``In fact, we do not need to worry about the linear change $L<...>$. With this trick in mind, we can, without loss of generality, consider
\begin{eqnarray}
\varpi(x,y)=\frac{\alpha P(x,y)+\beta Q(x,y)}{D(x,y)}\quad \varrho(x,y)=\frac{\gamma P(x,y)+\delta Q(x,y)}{D(x,y)},
\label{perein2}
\end{eqnarray}
and solve the problem of finding $A$ with an additional $4$ variables $\alpha$, $\beta$, $\gamma$, and $\delta$ in our disposition, with a single crucial restriction $\alpha\delta-\beta\gamma\neq 0$".}
\end{itemize}

In fact, if we are more careful and worry about the linear change, we arrive at the observation that a linear change does not add an additional freedom in the reduction: the equations (47), (48) and (49) in \cite{alkauskas} are also subject to a linear changes. After contemplating some time, this becomes obvious by the following two reasons.\\

\textbf{1.} Let $\alpha,\beta,\gamma,\delta\in\mathbb{R}$, $\alpha\delta-\beta\gamma\neq 0$. As special examples show, the vector fields $\varpi\bl\varrho$ and $\widehat{\varpi}\bl\widehat{\varrho}=\alpha\varpi+\beta\varrho\bl\gamma\varpi+\delta\varrho$ are not in any way related. For example, take $\begin{pmatrix}\alpha&\beta\\ \gamma&\delta\\ \end{pmatrix}=\begin{pmatrix} 0 & 1\\1 & 0 \\ \end{pmatrix}$, and $\varpi\bl\varrho=x^2\bl y^2$. This is a level $1$ rational flow, explicitly given by
\begin{eqnarray*}
\phi(\m{x})=\frac{x}{1-x}\bl\frac{y}{1-y}.
\end{eqnarray*}
On the other hand, the flow with the vector field $y^2\bl x^2$ is a superflow over $\mathbb{C}$ \cite{alkauskas-super1}, linearly conjugate (over complex numbers) to the superflow over $\mathbb{R}$ with the vector field $x^2-2xy\bl y^2-2xy$; the latter was explored in detail in \cite{alkauskas-un}. In particular, this flow can be integrated in terms of Dixonian elliptic functions. Thus, flows with vector fields $x^2\bl y^2$ and $y^2\bl x^2$ are not in any way related.\\  

\textbf{2.} The logic was that suitably conjugating a flow linearly in advance, we can afterwards find such a $1$-BIR $\ell$ that a flow $\ell^{-1}\circ\phi\circ\ell$ has a vector field whose denominator is of lower degree than that of $\phi$. However, according to (Proposition 15, \cite{alkauskas}),
\begin{eqnarray*}
\ell_{P,Q}^{-1}\circ L^{-1}\circ\phi\circ L\circ\ell_{P,Q}=L^{-1}\circ\ell^{-1}_{P',Q'}\circ\phi\circ\ell_{P',Q'}\circ L,
\end{eqnarray*}
where homogenic polynomials $P'(x,y)=P\circ L^{-1}(x,y)$, $Q'(x,y)=Q\circ L^{-1}(x,y)$. Suppose now that the flow on the left has a vector field whose common denominator has a lower degree than the vector field of $\phi$. However, the flow on the left is equal to the flow on the right. Now, it is obvious that linear conjugation does not alter the degree of the denominator, so already the flow $\ell^{-1}_{P',Q'}\circ\phi\circ\ell_{P',Q'}$ has a vector field with the lower degree denominator. So, denominators can be lowered with $1$-BIR transformations $\ell_{P,Q}$ only, and linear conjugation should not be considered at all while carrying on a reduction. Thus, reduction of vector fields are possible, as numerous examples in (\cite{alkauskas}, Sect. 2) show, but in much more limited cases.

\section{The proof}
\label{sec-2}
\subsection{Fundamental system}
\label{fundament}
The correct missing ingredient of the proof follows from techniques developed in \cite{alkauskas-ab}. We know that if $\phi=u\bl v$ is a projective flow, then $(u,v)$ satisfies the system of (in fact, two independent) PDEs
\begin{eqnarray}
\left\{\begin{array}{c}
u_{x}(\varpi-x)+u_{y}(\varrho-y)=-u,\\
v_{x}(\varpi-x)+v_{y}(\varrho-y)=-v.
\end{array}\label{PDE-u}
\right.
\end{eqnarray} 
These and the boundary conditions (\ref{bound}) uniquely determine the flow. \\

On the other hand, as the consequence of results in \cite{alkauskas-ab}, we get the following fundamental claim, which immediately lets to integrate the vector field $\varpi\bl\varrho$ in closed form in many cases, and replaces the differential system into the system of (generally, transcendental) non-differential equations. 
\begin{thm}Suppose, $x\varrho-y\varpi\neq 0$, and a pair of functions $(u,v)$ satisfies the system of PDE (\ref{PDE-u}) with the boundary conditions (\ref{bound}). Then the following system is satisfied: 
\begin{eqnarray}
\left\{\begin{array}{c}
\mathscr{W}(u,v)=\mathscr{W}(x,y),\\
\frac{1}{v}f\Big{(}\frac{u}{v}\Big{)}=\frac{1}{y}f\Big{(}\frac{x}{y}\Big{)}-1.
\end{array}\label{impl}
\right.
\end{eqnarray}
Here $f(x)$ and $\mathscr{W}(x,y)$ are the solutions of, respectively, (\ref{f-eq}) and (\ref{w-eq}). 
\label{thm1}
\end{thm}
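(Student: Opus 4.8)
The plan is to recognize that each of the two claimed identities asserts the equality of two functions solving one and the same first-order linear PDE, and then to fix the a priori free constant using the boundary behavior (\ref{bound}). Throughout I abbreviate $\mathcal{D}=(\varpi-x)\partial_{x}+(\varrho-y)\partial_{y}$, so that the system (\ref{PDE-u}) reads simply $\mathcal{D}u=-u$ and $\mathcal{D}v=-v$. The integral curves of $\mathcal{D}$ are the characteristics along which I will transport information from a generic point to the origin; the hypothesis $x\varrho-y\varpi\neq0$ guarantees both that $\mathscr{W}$ and $f$ are genuinely determined by (\ref{w-eq}) and (\ref{f-eq}) and that $\mathcal{D}$ is not purely radial.

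For the first identity set $G(x,y)=\mathscr{W}(u,v)$. By the chain rule and $\mathcal{D}u=-u$, $\mathcal{D}v=-v$, one gets $\mathcal{D}G=-\big(u\,\mathscr{W}_{x}(u,v)+v\,\mathscr{W}_{y}(u,v)\big)$, and since $\mathscr{W}$ is $1$-homogeneous, Euler's identity collapses the bracket to $\mathscr{W}(u,v)$, so $\mathcal{D}G=-G$. A direct computation using (\ref{w-eq}) together with $x\mathscr{W}_{x}+y\mathscr{W}_{y}=\mathscr{W}$ shows in the same way that $\mathcal{D}\mathscr{W}(x,y)=-\mathscr{W}(x,y)$. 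Hence the quotient $R=\mathscr{W}(u,v)/\mathscr{W}(x,y)$ obeys $\mathcal{D}R=0$, i.e. $R$ is a first integral of $\mathcal{D}$.

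For the second identity put $F(x,y)=\tfrac1y f(x/y)$, the $(-1)$-homogeneous function $A(x,y)/y$. As before $\mathcal{D}\big(F(u,v)\big)=-\big(uF_{x}(u,v)+vF_{y}(u,v)\big)=+F(u,v)$, where Euler's identity for a $(-1)$-homogeneous function now produces the opposite sign. A second computation, expanding $F_{x},F_{y}$ in terms of $f,f'$ and inserting $\varpi=y^{2}\varpi(x/y)$, $\varrho=y^{2}\varrho(x/y)$, reduces $\mathcal{D}F$ to $f'(\varpi-s\varrho)-f\varrho+f/y$ with $s=x/y$; the fundamental ODE (\ref{f-eq}) turns the first two terms into the constant $-1$, giving $\mathcal{D}F=F-1$. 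Since $\mathcal{D}$ annihilates constants, both $F(u,v)$ and $F-1$ solve $\mathcal{D}\Psi=\Psi$, so $\Phi:=F(u,v)-F+1$ satisfies $\mathcal{D}\Phi=\Phi$.

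It remains to show $R\equiv1$ and $\Phi\equiv0$. Parametrizing a characteristic so that $\mathcal{D}=d/dt$, one has $R\equiv\mathrm{const}$ and $\Phi(t)=\Phi(t_{0})e^{t-t_{0}}$; because $\varpi,\varrho$ are $2$-homogeneous, near the origin $\mathcal{D}\approx-(x\partial_{x}+y\partial_{y})$, so every characteristic tends to $\mathbf{0}$ as $t\to+\infty$. Evaluating the limits there via (\ref{bound}), I substitute $u(zx,zy)=zx+z^{2}\varpi+\cdots$, $v(zx,zy)=zy+z^{2}\varrho+\cdots$: to leading order $R(zx,zy)\to1$, and — keeping the $O(z^{2})$ correction to $u/v$ and invoking (\ref{f-eq}) once more — $F(u,v)-F\to-1$, i.e. $\Phi(zx,zy)\to0$. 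Since a solution of $\mathcal{D}\Phi=\Phi$ grows like $e^{t}$ along a characteristic entering the origin, a vanishing limit forces $\Phi(t_{0})=0$ at every base point, and the constant $R$ must equal its limit $1$; this yields both identities. I expect the genuine obstacle to be this last step: making the transport-to-the-origin argument rigorous (that each generic point lies on a characteristic whose forward limit is $\mathbf{0}$ and along which $F(u,v)-F$ extends continuously), and, computationally, correctly carrying the second-order term in the expansion of $u/v$ that alone is responsible for the additive $-1$.
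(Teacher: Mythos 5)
Your route is genuinely different from the paper's. The paper's primary proof simply imports the explicit closed-form parametrization $u\bl v=\frac{\k(a-\tilde{\v})\tilde{\v}}{a-\tilde{\v}}f(\k(a-\tilde{\v}))\bl\frac{\tilde{\v}}{a-\tilde{\v}}f(\k(a-\tilde{\v}))$ from the companion paper and reads off both identities of (\ref{impl}); its secondary ``direct but more tedious'' argument restricts to the ray, checks equality of (\ref{z}) at $z=0$ and of the first $z$-derivative via (\ref{f-eq}), and then inducts on higher derivatives using (\ref{PDE-u}). You instead observe that $\mathscr{W}(u,v)$, $\mathscr{W}(x,y)$ are eigenfunctions of $\mathcal{D}=(\varpi-x)\p_x+(\varrho-y)\p_y$ with eigenvalue $-1$, that $F(u,v)$ and $F-1$ (with $F=\tfrac1y f(x/y)$) are eigenfunctions with eigenvalue $+1$, and then kill the defects $R-1$ and $\Phi$ by transporting to the origin along characteristics. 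Your Euler-identity computations all check out (in particular $\mathcal{D}F=F-1$ is exactly (\ref{f-eq}) in disguise, and your limit $F(u,v)-F\to-1$ along rays reproduces the paper's derivative computation at $z=0$). What your packaging buys is a conceptual explanation of why matching the value and one derivative at the boundary suffices: a solution of $\mathcal{D}\Phi=\Phi$ with vanishing limit at the attracting equilibrium must vanish identically, so no induction on higher derivatives is needed.

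The genuine gap is the one you flag yourself, and it is not merely cosmetic: the assertion that \emph{every} characteristic of $\mathcal{D}$ tends to $\m{0}$ as $t\to+\infty$ is false. The field $(\varpi-x,\varrho-y)$ generally has equilibria away from the origin (for $\varpi\bl\varrho=x^2\bl y^2$ these are $(1,0)$, $(0,1)$, $(1,1)$), so the transport argument only applies on the basin of attraction of the origin; you must then extend the identities to the full domain by analytic continuation or by the group law, and say so. Second, your limits $R\to1$, $\Phi\to0$ are computed along rays $(xz,yz)$, whereas you need them along characteristics, which approach the origin tangentially to some limiting direction; this requires the ray-limits to hold locally uniformly in the direction and the limiting direction to avoid the singular rays of $f$ and $\mathscr{W}$ (e.g.\ $y=0$). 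Third, $f$ and $\mathscr{W}$ are in general multivalued, so $F(u,v)$ and $\mathscr{W}(u,v)$ are only defined after a choice of branch near $z=0$ --- the paper handles this by working with $(u^z,v^z)$ for small $z$, and your argument needs the same caveat. All three points are repairable, but as written the last step does not yet constitute a proof.
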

These, together with boundary conditions (\ref{bound}), yield the flow. As we will soon see, this Proposition is an example of ``bootstrapping" phenomenon. In fact, boundary conditions can also be accommodated into the system (\ref{impl}) as follows. Let $u^{z}(x,y)=z^{-1}u(xz,yz)$, $v^z=z^{-1}v(xz,yz)$. Then the second equation of (\ref{impl}) gives, after a substitution $(x,y)\mapsto (xz,yz)$,
\begin{eqnarray}
\frac{1}{v^{z}}f\Big{(}\frac{u^{z}}{v^{z}}\Big{)}=\frac{1}{y}f\Big{(}\frac{x}{y}\Big{)}-z.\label{z}
\end{eqnarray} 
So, we choose the solution $(u^{z},v^{z})$ of this and $\mathscr{W}(u^{z},v^{z})=\mathscr{W}(x,y)$ which is equal to $(x,y)$ at $z=0$. If fact, only when dealing with rational flows we have no question of ramification. In all other cases the second equation of the system (\ref{impl}) should be interpreted in the sense just described.\\
 
Though Theorem \ref{thm1} gives two equations, we can confine just to the second one due to the following fact. Note that for any integer $N$, the solution to the homogeneous differential equation
\begin{eqnarray*}
Ng(x)\varrho(x)+g'(x)(x\varrho(x)-\varpi(x))=0
\end{eqnarray*}
is given by $\mathscr{W}^{-N}(x,1)$.
So, if we take a look at the case $N=-1$, it implies that the general solution to (\ref{f-eq}) is given by $f(x)=f_{1}(x)+\frac{\sigma}{\mathscr{W}(x,1)}$, where $f_{1}$ is any special solution. Now, suppose that $f(x)$ satisfies the second equation of (\ref{impl}). Since $\sigma\in\mathbb{R}$ is arbitrary, this gives the second equation of (\ref{impl}) for $f_{1}(x)$, and the first equation of (\ref{impl}) for $\mathscr{W}$.  
 
\begin{proof} The formulas (\ref{impl}) were worked out in detail for special examples in (\cite{alkauskas-ab}, Sections 4.2, 6, 7, 8) and also in (\cite{alkauskas-super1}, Section 3), but they hold in general. Indeed, let us take a look at (\cite{alkauskas-ab}, p. 748, formula (24)). It claims that
\begin{eqnarray}
u(x,y)\bl v(x,y)=\frac{\k(a-\tilde{\v})\tilde{\v}}{a-\tilde{\v}}f\big{(}
\k(a-\tilde{\v})\big{)}\bl
\frac{\tilde{\v}}{a-\tilde{\v}}f\big{(}\k(a-\tilde{\v})\big{)},
\label{form}
\end{eqnarray}
where $\tilde{\v}=\mathscr W(x,y)$, $\alpha(x)=f(x)\mathscr{W}(x,1)$, $a=\alpha(\frac{x}{y})$, and $\mathbf{k}$ is the inverse of $\alpha$. 
Thus,
\begin{eqnarray*}
\frac{u}{v}=\k(a-\tilde{\v})\mathop{\Longrightarrow}^{(\ref{form})}
\frac{1}{v}f\Big{(}\frac{u}{v}\Big{)}=
\frac{a}{\tilde{\v}}-1.
\end{eqnarray*}
Finally, the last displayed line on (p. 748, \cite{alkauskas-ab}), or the above formulas, too, show that $\frac{a}{\tilde{\v}}-1=\frac{1}{y}f(\frac{x}{y})-1$. This gives (\ref{impl}).\\

 Note that the formulas (\ref{form}) do not depend which special solution $f(x)$ to (\ref{f-eq})  is being used.  Indeed, keeping $\tilde{\v}$ fixed, and taking different $f$ changes it by a summand $\frac{\sigma}{\mathscr{W}(x,1)}$. Let new values are denoted by adding $``\star"$:
\begin{eqnarray*}
f_{\star}=f+\frac{\sigma}{\mathscr{W}(x,1)},\quad
\alpha_{\star}=\alpha+\sigma,\quad\mathbf{k}_{\star}(x)=\mathbf{k}(x-\sigma),\quad a_{\star}=a+\sigma.
\end{eqnarray*}
Thus, $R:=\k(a-\tilde{\v})=\k_{\star}(a_{\star}-\tilde{\v})$ does not change, and so $\frac{u_{\star}}{v_{\star}}=\frac{u}{v}$ does not change. Further, we have:
\begin{eqnarray*}
v_{\star}=\frac{\tilde{\v}}{a_{\star}-\tilde{\v}}f_{\star}(R)&=&\frac{\tilde{\v}}{a+\sigma-\tilde{\v}}\Big{(}f(R)+\frac{\sigma}{\mathscr{W}(R,1)}\Big{)}
=\frac{\tilde{\v}}{a+\sigma-\tilde{\v}}\Big{(}f(R)+\frac{\sigma f(R)}{\alpha(R)}\Big{)}\\
&=&\frac{\tilde{\v}f(R)}{a+\sigma-\tilde{\v}}\Big{(}1+\frac{\sigma}{a-\tilde{\v}}\Big{)}=\frac{\tilde{\v}}{a-\tilde{\v}}f(R)=v.
\end{eqnarray*}
Also, we can choose $c\mathscr{W}$, $c\neq 0$, instead of $\mathscr{W}$. Similar calculation shows that this also does not affect the final formulas, and so $u,v$ are defined uniquely. Sure, we must be careful as far as ramification is concerned - for abelian flows, for example, $\alpha$ is an abelian integral, and generally it is multi-valued. To avoid these problems, we must consider a pair $(u^{z},v^{z})$ for $z$ small enough. This finishes the proof.\\

There exists the second, a direct but more tedious way to prove (\ref{z}). Indeed, the boundary conditions show that at $z=0$ both sides of (\ref{z}) coincide. Now, let us take the derivative with respect to $z$ of the left side. This gives
\begin{eqnarray*}
-\frac{(v^{z})'}{(v^{z})^2}f\Big{(}\frac{u^z}{v^z}\Big{)}+f'\Big{(}\frac{u^z}{v^z}\Big{)}\frac{(u^{z})'v^{z}-u^{z}(v^{z})'}{(v^{z})^3}.
\end{eqnarray*}
When $z=0$, this becomes
 \begin{eqnarray*}
-\frac{\varrho}{y^2}f\Big{(}\frac{x}{y}\Big{)}+f'\Big{(}\frac{x}{y}\Big{)}\frac{y\varpi-x\varrho}{y^3}.
\end{eqnarray*}
According to (\ref{f-eq}), this is equal to $-1$, exactly the value of the derivative at $z=0$ of the right hand side of (\ref{z}). We then act by an induction and verify that (\ref{PDE-u}) imply that higher derivatives at $z=0$ of the left hand side of (\ref{z}) vanish.
\end{proof}

\subsection{Algebraicity}
\label{algebraicity}
Now, suppose $u\bl v$ is an algebraic projective plane flow. Then from (\ref{z}) we infer that
\begin{eqnarray}
f\Big{(}\frac{u(xz,yz)}{v(xz,yz)}\Big{)}=\frac{v(xz,yz)}{yz}f\Big{(}\frac{x}{y}\Big{)}-v(xz,yz).\label{mono}
\end{eqnarray}
This is the same as the first displayed Eq. on p. 756 in \cite{alkauskas-ab}. We now repeat the same argument as the one given in \cite{alkauskas-ab}, p. 755. Indeed, according to the fact (\ref{not-constant}), there exists a pair $(x,y)$ such that the function $\frac{u(xz,yz)}{v(xz,yz)}$ is a non-constant algebraic function in $z$. The right hand side of (\ref{mono}), call it $T(z)$, is an algebraic function in $z$, so is the argument on the left hand side; call it $L(z)$. We know that $L(z)$ is not a constant. We can write this as $f(L(z))=T(z)$, or $f(z)=T(L^{-1}(z))$. The inverse of an algebraic function is an algebraic itself: if $G(\omega,z)\in\mathbb{R}[\omega,z]$ and $G(L(z),z)=0$, then $G(z,L^{-1}(z))=0$. This shows that $f$ is also algebraic! This is a crucial consequence of the Eq. (\ref{f-eq}) for algebraic (and rational) flows.

\subsection{Basic lemma}
As now is clear, the following result is crucial in describing algebraic and rational projective flows.
\begin{lem}
\label{bas-lem}
Let $A(x)$ and $B(x)\neq 0$ be rational functions. Suppose, all the solutions to the ODE
\begin{eqnarray}
f(x)A(x)+f'(x)B(x)=1
\label{g-eq}
\end{eqnarray}
are algebraic functions. Then the general solution to this ODE can be given by
\begin{eqnarray*}
f(x)=r(x)+\sigma q^{1/N}(x),
\end{eqnarray*}
where $\sigma\in\mathbb{R}$, $N\in\mathbb{N}$, and $r(x),q(x)$ are rational functions. 
\end{lem}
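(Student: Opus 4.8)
The plan is to exploit the linear structure of the first-order ODE~(\ref{g-eq}). Since $B(x)\neq 0$, the associated homogeneous equation $f(x)A(x)+f'(x)B(x)=0$ has a one-dimensional solution space, spanned by some function $f_0(x)$, and the general solution of~(\ref{g-eq}) is $f(x)=f_1(x)+\sigma f_0(x)$, where $f_1$ is any fixed particular solution and $\sigma\in\mathbb{R}$. By hypothesis \emph{every} solution is algebraic; taking $\sigma=0$ shows $f_1$ is algebraic, and comparing two distinct values of $\sigma$ shows $f_0$ is algebraic as well. Thus the entire problem reduces to understanding the algebraic solution $f_0$ of a \emph{first-order linear homogeneous} ODE with rational coefficients, namely $f_0'/f_0=-A/B\in\mathbb{R}(x)$.

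The heart of the matter is to show that such an algebraic $f_0$ must be of the form $c\,q^{1/N}$ for a rational function $q$ and an integer $N$. First I would record that $f_0'/f_0=-A(x)/B(x)$ is a rational function; formally $f_0=\exp\!\big(\!\int -A/B\,\d x\big)$. I would then analyze the logarithmic derivative: partial fractions give $-A/B=R'(x)+\sum_j \frac{m_j}{x-\rho_j}$ for some rational function $R$ whose own partial-fraction expansion has no simple poles, and constants $m_j$. If $R\neq 0$ then the exponential of a nonconstant antiderivative of $R'$ would be transcendental (an essential singularity or a genuine exponential factor), contradicting algebraicity; so $R$ is constant and $f_0=c\prod_j (x-\rho_j)^{m_j}$ for real exponents $m_j$. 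Algebraicity of $f_0$ forces each $m_j$ to be rational, say $m_j=a_j/N$ over a common denominator $N\in\mathbb{N}$, whence $f_0=c\,\big(\prod_j(x-\rho_j)^{a_j}\big)^{1/N}=c\,q^{1/N}$ with $q(x)=\prod_j(x-\rho_j)^{a_j}\in\mathbb{R}(x)$. Absorbing the constant $c$ into $\sigma$ and writing $r=f_1$ then yields the claimed form $f(x)=r(x)+\sigma\,q^{1/N}(x)$.

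The main obstacle I anticipate is rigorously ruling out transcendental behaviour when integrating the rational function $-A/B$ and pinning down that the exponents $m_j$ are rational rather than merely real. For the first point, the clean argument is local-analytic: an algebraic function can only have finitely many branch points, around each of which it behaves like a Puiseux series with a \emph{rational} fractional exponent, and it can have at worst polynomial growth at each singular point; a factor $\exp(R(x))$ with $R$ nonconstant rational violates both the finite-branching and the moderate-growth requirements (e.g. it has an essential singularity wherever $R$ has a pole, or at infinity). For the second point, the monodromy of $f_0$ around each $\rho_j$ multiplies $f_0$ by $e^{2\pi i m_j}$; since an algebraic function has a finite monodromy group, each $e^{2\pi i m_j}$ must be a root of unity, forcing $m_j\in\mathbb{Q}$. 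One must also check the behaviour at $x=\infty$ and confirm that the product $q(x)=\prod_j(x-\rho_j)^{a_j}$, with the $a_j$ now integers, is genuinely a rational function and that taking a single global $N$-th root captures all solutions simultaneously.

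A remark worth including is that this lemma is exactly what is needed to close the classification: combined with the algebraicity conclusion of Subsection~\ref{algebraicity}, it shows that for any algebraic (in particular rational) projective flow the distinguished solution $f$ of~(\ref{f-eq}) splits as a rational part plus an algebraic radical $\sigma\,q^{1/N}$, matching the starred quotation of (\cite{alkauskas}, Sect.~2.3) and thereby supplying the missing ingredient without recourse to the flawed reduction algorithm.
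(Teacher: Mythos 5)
Your decomposition into a particular solution plus the one-dimensional homogeneous solution space is a genuinely different route from the paper, and your analysis of the homogeneous solution $f_{0}$ (simple poles of $-A/B$ with rational residues via finite monodromy, exclusion of exponential factors via finite branching and moderate growth) is sound and parallels the paper's case \textbf{(i)}. But there is a genuine gap at the very last step: you write ``writing $r=f_{1}$'' where $f_{1}$ is an arbitrary particular solution, which your argument only shows to be \emph{algebraic}. The lemma asserts that $r(x)$ is a \emph{rational} function, and this is precisely the nontrivial content — the paper stresses repeatedly that the existence of at least one rational solution to (\ref{f-eq}) is what drives the classification of rational flows (it is what produces the $1$-BIR putting the flow in univariate form). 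Your proof as written establishes only that the general solution is (algebraic particular solution) $+\,\sigma q^{1/N}$, which is weaker than the statement and insufficient for the application.

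The paper closes this gap with a minimal-polynomial argument: for an algebraic solution $f$ of degree $N$ with minimal equation $f^{N}+\ell_{1}f^{N-1}+\cdots+\ell_{N}=0$, the shift $h=f+\ell_{1}/N$ kills the $h^{N-1}$ coefficient; differentiating the minimal equation of $h$ and substituting $h'=(1-hA)/B$ shows that $h$ cannot satisfy the inhomogeneous equation with a nonzero rational right-hand side (the derived degree-$N$ relation would have a nonzero $h^{N-1}$ coefficient, contradicting minimality), so $h$ solves the \emph{homogeneous} equation and $r=-\ell_{1}/N$ is the desired rational particular solution. In your framework the cleanest repair is a trace argument: the derivation of $\mathbb{R}(x)$ extends uniquely to the algebraic closure and commutes with conjugation over $\mathbb{R}(x)$, so every conjugate $f^{(i)}$ of an algebraic solution $f$ is again a solution of (\ref{g-eq}); since the equation is affine-linear, the average $\frac{1}{N}\sum_{i}f^{(i)}=-\ell_{1}/N\in\mathbb{R}(x)$ is a rational solution, which you may then take as $r$. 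Without some such step the lemma is not proved. (Two smaller points you should also nail down: that the exponents $a_{j}$ can be arranged so that $q\in\mathbb{R}(x)$ rather than $\mathbb{C}(x)$ — conjugate poles carry equal residues because $A,B$ are real — and the degenerate case $A=0$, where one needs that an algebraic antiderivative of $1/B$ is rational.)
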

\begin{proof}First, assume that $A\neq 0$. If every solution to (\ref{g-eq}) is rational, then the claim is obvious, and follows from general properties of non-homogeneous linear differential equations, $N=1$ in this case. Suppose, $f(x)$ is an algebraic solution to (\ref{g-eq}) of degree $N\geq 2$. Let its minimal monic equation over $\mathbb{R}(x)$ be given by
\begin{eqnarray*}
f^{N}+f^{N-1}\ell_{1}+\cdots+f\ell_{N-1}+\ell_{N}=0,\quad\ell_{i}\in\mathbb{R}(x).
\end{eqnarray*} 
Consider the function $h=f+\frac{\ell_{1}}{N}$. It satisfies the monic minimal equation
\begin{eqnarray}
h^{N}+h^{N-s}k_{s}+h^{N-s-1}k_{s+1}+\cdots+h k_{N-1}+k_{N}=0,
\quad k_{i}\in\mathbb{R}(x),\quad s\geq 2.
\label{h-alg}
\end{eqnarray}
The function $h$ satisfies the ODE (\ref{g-eq}) too, but with the right side now equal to some rational function $t(x)$. There are two cases to consider - \textbf{(i)} this rational function is equal to $0$, and \textbf{(ii)} it is not.\\

\textbf{(i)} We need to show that if all solutions to 
\begin{eqnarray*}
h(x)A(x)+h'(x)B(x)=0
\end{eqnarray*}
are algebraic, then $h(x)=\sigma q^{1/N}(x)$, where $q$ is a rational function, $N\in\mathbb{Z}$. This part follows easily from calculus routines. Indeed, we have:
\begin{eqnarray*}
\frac{h'}{h}=-\frac{A}{B}\Longrightarrow h=\sigma\exp\Big{(}-\int\frac{A}{B}\d x\Big{)}.
\end{eqnarray*}
Expressing a rational function $-\frac{A}{B}$ as a sum over integral part and primitive fractions, we see that the right hand side is algebraic only if
\begin{eqnarray*}
-\frac{A}{B}=\frac{1}{N}\sum\limits_{j=1}^{s}\frac{m_{j}}{x-\xi_{j}},\quad m_{j}\in\mathbb{Z},\quad \xi_{j}\in\mathbb{C}.
\end{eqnarray*}
Thus,
\begin{eqnarray*}
h^{N}=\sigma^{N}\prod\limits_{j=1}^{s}(x-\xi_{j})^{m_{j}}=\sigma^{N} q(x)\in\mathbb{C}(x).
\end{eqnarray*}
In fact, it belongs to $\mathbb{R}(x)$: if $\xi_{j}$ and $\xi_{k}$ are non-real complex conjugates, the integers $m_{j}$ and $m_{k}$ coincide, since $A$, $B$ are defined over $\mathbb{R}$. Thus, $f(x)=-\frac{\ell_{1}(x)}{N}+\sigma q^{1/N}(x)$, and the claim of the lemma is proved.\\

\textbf{(ii)} Now, suppose $t(x)\neq 0$. Dividing by $t(x)$, we obtain that $h(x)$ satisfies the ODE (\ref{g-eq}) with $\frac{A}{t}$ and $\frac{B}{t}$ instead of $A$ and $B$, which we, without loss of generality, rename again as $A$ and $B$. We will show that this leads to a contradiction. Indeed, let us differentiate (\ref{h-alg}) with respect to $x$. This gives
\begin{eqnarray}
\Big{(}Nh^{N-1}+(N-s)h^{N-s-1}k_{s}+\cdots+k_{N-1}\Big{)}h'+
h^{N-s}k'_{s}+h^{N-s-1}k'_{s+1}+\cdots+k'_{N}=0.
\label{beveik}
\end{eqnarray}
Now, due to (\ref{g-eq}), we know that $h'=\frac{1-hA}{B}$. Let us plug this into (\ref{beveik}). This gives
\begin{eqnarray*}
\Big{(}Nh^{N-1}+(N-s)h^{N-s-1}k_{s}+\cdots+k_{N-1}\Big{)}\frac{1-hA}{B}+
h^{N-s}k'_{s}+h^{N-s-1}k'_{s+1}+\cdots+k'_{N}=0.
\end{eqnarray*}
This is the equation for $h$ of degree $N$. Therefore, it is a rational multiple of the minimal equation (\ref{h-alg}). However, the coefficient at $h^{N}$ is equal to $-\frac{NA}{B}$, and since $s\geq 2$, the coefficient at $h^{N-1}$ is equal to $\frac{N}{B}$. This is not $0$, and we get a contradiction. \\

If $A=0$, the claim follows directly from the fact that if $\int\frac{\d x}{B}$ is algebraic, then it is rational.
 \end{proof}
\section{Corrections and alternative arguments}
In this section we list what changes in the main text of papers in question, and in next sections we prove some supplementary results. 
\subsection{The projective translation equation and rational plane flows. I }
\label{cor-rat}
Suppose, $\phi=u\bl v$ is a rational projective flow. Consider the ODE (\ref{f-eq}) (to be consequent, now we take $-1$ instead of $+1$). As we now know from Subsection \ref{algebraicity} and Lemma \ref{bas-lem}, it always has at least one rational solution. With its help we can find a $1$-BIR $\ell$ such that $\ell^{-1}\circ\phi\circ\ell(\m{x})$ is a flow in an univariate form $\mathcal{U}(x)\bl\frac{y}{y+1}$, using the terminology introduced in \cite{alkauskas}. Note that since  $\frac{y}{y+1}$ is a birational transformation of $P^{1}(\mathbb{R})$, $\mathcal{U}$ is necessary a Jonqui\`{e}res transformation of the form 
\begin{eqnarray*}
\mathcal{U}(x,y)=\frac{a(y)x+b(y)}{c(y)x+d(y)},
\end{eqnarray*}
$a,b,c,d\in\mathbb{R}[x]$, $ad-bc$ is not identically $0$.
The first coordinate of a vector field for $\phi$, given by $\frac{\d}{\d z}\frac{\mathcal{U}(xz,yz)}{z}|_{z=0}$, then must be a quadratic form; see \cite{alkauskas}, p. 307 for a calculation. Thus, in (\cite{alkauskas}, Section 4) we can immediately pass to Step IV. As mentioned Step I. is wrong. Indeed, Proposition 9, as stated, is not correct. Steps II. (apart from calculation of a vector field of $\mathcal{U}$) and III. turn out to be superfluous; in the next Subsection we will see that step IV. is also superfluous! However, as mentioned, there is a bright side of this: while working on Step III. we discovered 3 elliptic unramified flows, one of which turned out to be the simplest irreducible superflow whose group of symmetry is the dihedral group $\mathbb{D}_{3}$ of order $6$. The last subject has already ramified into a separate area of research; see \cite{alkauskas-super1,alkauskas-super2,alkauskas-super3}.  \\

Thus, all the results in \cite{alkauskas}, apart from Subsection 4.1, remain valid. To be more clear, the results in Subsection 4.1 are also correct, apart from this logical flaw and its consequences, and what is proven there can be formalised, but it has no immediate relevance to the problem of projective flows under consideration. 

\subsection{Algebraic and abelian solutions to the projective translation equation}
\label{cor-alg}
 This paper is devoted to special examples. So it remains valid, with few small corrections in formulation of classifying Theorems and one sentence in Section 3.\\

 Theorem 3 remains valid, if we change the first sentence as follows: ``Let $\phi(\m{x})$ be an algebraic flow, $\phi(\m{x})\neq \phi_{\mathrm{id}}(\m{x})$, whose vector field with the help of $1$-BIR can be transformed into a pair of quadratic forms". \\
 
 As we can now easily see, the variety of algebraic flows is much richer, where a vector field does not necessarily reduce to a pair of quadratic forms. There is a simple method to produce algebraic flows. Indeed, we can act as follows.\\
 
  Let $r(x)$, $q(x)$ be arbitrary rational functions over $\mathbb{R}$, and $N\in\mathbb{N}$. We can find an ODE (\ref{f-eq}) which is satisfied by the function $r(x)+\sigma q^{1/N}(x)$. For this we need to solve the system
\begin{eqnarray*}
\left\{\begin{array}{c}
r(x)\varrho(x)+r'(x)(x\varrho(x)-\varpi(x))=1,\\
Nq(x)\varrho(x)+q'(x)(x\varrho(x)-\varpi(x))=0,
\end{array}
\right.
\end{eqnarray*}
where now $\varpi$ and $\varrho$ are unknown rational functions. This has a unique rational solution $(\varpi,\varrho)$ if $rq'\neq N r'q$. In other words, if $r\neq \delta q^{1/N}$. If this is the case, we get an algebraic flow from the system (\ref{impl}), where $f(x)=r(x)$, and $\mathscr{W}(x,y)=y^{N}q^{-1}(\frac{x}{y})$. 
This algebraic flow has the vector field exactly equal to $y^{2}\varpi(\frac{x}{y})\bl y^{2}\varrho(\frac{x}{y})$.\\

Further, as noted in the abstract, we can now classify algebraic flows with respect to a $1$-BIR conjugacy, and this turns out to be even simpler than the classification of vector fields consisting of pairs of quadratic forms that produce algebraic flows (Theorem 3 in \cite{alkauskas-ab}).\\
    
  As a consequence of basic Lemma \ref{bas-lem}, we have the following. 
 \begin{prop}Suppose $x\varrho-y\varpi\neq 0$. The projective flow with the vector field $\varpi\bl\varrho$ is algebraic if and only if all the solutions of the ODE (\ref{f-eq}) are algebraic. If this is the case, then at least one of the solutions is rational.\\
 
  Let $\phi=u\bl v\neq\phi_{\mathrm{id}}$ be a projective algebraic plane flow, as before $x\varrho-y\varpi\neq 0$. Then there exists a $1$-BIR $\ell$, such that $\ell^{-1}\circ\phi\circ\ell$ is given by
\begin{eqnarray*}
\mathcal{U}(x,y)\bl\frac{y}{y+1},
\end{eqnarray*} 
where $\mathcal{U}(x,y)$ is an algebraic function. Thus, the vector field of this flow is $\ell$-conjugate to $\varpi(x,y)\bl (-y^{2})$, where $\varpi$ is a $2$-homogeneous rational function. 
 \end{prop}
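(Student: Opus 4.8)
The plan is to handle the two assertions separately, using Theorem~\ref{thm1} as a dictionary between the flow $\phi=u\bl v$ and the pair $(f,\mathscr{W})$ attached to its vector field, and Lemma~\ref{bas-lem} to control the solution space of (\ref{f-eq}). For the forward implication of the equivalence I would simply appeal to Subsection~\ref{algebraicity}: if $\phi$ is algebraic, then passing from (\ref{z}) to (\ref{mono}) and combining the non-degeneracy (\ref{not-constant}) with the fact that the functional inverse of an algebraic function is again algebraic shows that a fixed particular solution $f$ of (\ref{f-eq}) is algebraic. To promote this to the statement that \emph{all} solutions are algebraic, I would recall that the general solution of (\ref{f-eq}) is $f+\frac{\sigma}{\mathscr{W}(x,1)}$, so it suffices that $\mathscr{W}$ be algebraic; but an algebraic flow is abelian (as recorded just after Definition~\ref{defin-ab}), so $\mathscr{W}^{M}$ is rational for some $M$ and hence $\mathscr{W}$, and with it $\frac{1}{\mathscr{W}(x,1)}$, is algebraic.

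For the reverse implication I would invoke Lemma~\ref{bas-lem} directly: if every solution of (\ref{f-eq}) is algebraic, then the general solution has the shape $r(x)+\sigma q^{1/N}(x)$ with $r,q$ rational and $N\in\mathbb{N}$. In particular $r$ is a rational solution, which already settles the final clause ``at least one of the solutions is rational''. Matching the homogeneous part gives $\mathscr{W}(x,1)=c\,q^{-1/N}(x)$ for some constant $c$, so $\mathscr{W}$ is algebraic as well. With $f=r$ and $\mathscr{W}$ both algebraic, Theorem~\ref{thm1} presents $(u,v)$ as a solution of the two algebraic relations (\ref{impl}); fixing the branch through the identity by means of the $z$-deformation (\ref{z}) yields algebraic functions $u,v$, so the flow is algebraic.

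For the second part I would exhibit the reducing $1$-BIR explicitly. Take the rational solution $r$ of (\ref{f-eq}) in the sign normalisation that sends the second coordinate of the vector field to $-y^{2}$, put $A(x,y)=r(x/y)$, write this $0$-homogeneous rational function as $P/Q$ with $P,Q$ homogeneous of equal degree, and set $\ell=\ell_{P,Q}$ as in (\ref{bir}). By the derivation preceding (\ref{f-eq}) together with Proposition~\ref{prop1}, the conjugated field $v(\ell^{-1}\circ\phi\circ\ell)$ equals $\varpi'\bl(-y^{2})$, where $\varpi'$ is a $2$-homogeneous rational function since formula (\ref{vecconj}) preserves both rationality and homogeneity. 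Writing $\ell^{-1}\circ\phi\circ\ell=u'\bl v'$, the second coordinate $v'$ obeys the second PDE of (\ref{PDE-u}) with $\varrho'=-y^{2}$; as $\frac{y}{y+1}$ satisfies that PDE and the boundary condition (\ref{bound}), uniqueness forces $v'=\frac{y}{y+1}$, and I set $\mathcal{U}=u'$. Finally $\mathcal{U}$ is algebraic because $\phi$ is algebraic while $\ell,\ell^{-1}$ are rational.

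The step I expect to be the main obstacle is the forward direction of the equivalence, namely upgrading ``the distinguished solution $f$ is algebraic'' to ``every solution is algebraic'', which is precisely the assertion that $\mathscr{W}$ itself — not merely the inhomogeneous datum $f$ — is algebraic; here one genuinely needs the abelian (equivalently, algebraicity of $\mathscr{W}$) input rather than the argument of Subsection~\ref{algebraicity} alone. A secondary point demanding care, in the reverse direction, is that solving the algebraic system (\ref{impl}) really returns algebraic $u,v$: since inverting algebraic relations introduces branch ambiguity, one must pin down the correct sheet through $z=0$ using (\ref{z}) before concluding.
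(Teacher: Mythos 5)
Your argument is correct and is essentially the derivation the paper intends: the Proposition is stated there without a separate proof, precisely as a consequence of Subsection~\ref{algebraicity}, Lemma~\ref{bas-lem} and Theorem~\ref{thm1}, and your second part (building $\ell$ from the rational solution $r$ via $A(x,y)=r(x/y)$ and pinning down $v'=\frac{y}{y+1}$ from the PDE (\ref{PDE-u}) with $\varrho'=-y^2$) is exactly the reduction to univariate form used throughout \cite{alkauskas}.

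One caveat on the forward direction. You upgrade ``the distinguished solution is algebraic'' to ``all solutions are algebraic'' by invoking that algebraic flows are abelian, hence $\mathscr{W}$ is algebraic. The paper only asserts abelianness with the remark that ``it will follow from Theorem~\ref{thm1}'' --- i.e.\ from the very chain (all solutions of (\ref{f-eq}) algebraic, then Lemma~\ref{bas-lem} gives $\mathscr{W}^{N}$ rational) you are in the middle of establishing, so your route risks circularity. The detour is unnecessary: the paper records that equation (\ref{z}), and hence (\ref{mono}), holds for \emph{every} solution $f$ of (\ref{f-eq}) (the formulas do not depend on which special solution is used, since the $\sigma$-terms balance separately), so the inversion argument $f=T\circ L^{-1}$ of Subsection~\ref{algebraicity} applies verbatim to each solution and yields directly that all of them are algebraic; abelianness of level dividing $N$ then comes out of Lemma~\ref{bas-lem} as a conclusion rather than an input. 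With that repair the proof is complete and matches the paper's.
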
 
 So, suppose that $\varrho(x,y)=-y^2$. Similarly as with rational flows, we may call an algebraic flow of the form $\mathcal{U}(x,y)\bl\frac{y}{y+1}$ as being in an \emph{univariate form}. In this case $\varrho(x)=-1$, and one of the solutions of (\ref{f-eq}) is given by $f(x)=-1$. The second equation of (\ref{impl}) now reads as $\frac{1}{v}=\frac{1}{y}+1$, which does hold, since $v(x,y)=\frac{y}{y+1}$. Therefore, we get the following very easy characterization of algebraic flows. 
\begin{prop}
\label{prop-char}
Let $\mathscr{W}(x,y)$ be a homogeneous rational function of degree $N$, which cannot be written as $\mathscr{V}^{m}(x,y)$ for homogeneous rational $\mathscr{V}$ and an integer $m\geq 2$. Then the solution of the algebraic equation
\begin{eqnarray}
\mathscr{W}\Big{(}\mathcal{U}(x,y),\frac{y}{y+1}\Big{)}=
\mathscr{W}(x,y),
\label{eq-w}
\end{eqnarray}
which satisfies the boundary condition $\lim\limits_{z\rightarrow 0}\frac{\mathcal{U}(xz,yz)}{z}=x$, defines an algebraic  projective flow $\mathcal{U}(x,y)\bl\frac{y}{y+1}$ of level $N$ with a vector field
\begin{eqnarray*}
\frac{Ny\mathscr{W}}{\mathscr{W}_{x}}-xy\bl(-y)^2.
\end{eqnarray*}
\end{prop}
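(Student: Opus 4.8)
The plan is to read the entire statement off Theorem~\ref{thm1}, by specialising its two equations to the univariate normalisation $v=\frac{y}{y+1}$, and then to recover the vector field from the orbit equation (\ref{w-eq}). The starting observation is that $v=\frac{y}{y+1}$ has vector-field coordinate $\varrho(x,y)=-y^2$, so in dehomogenised form $\varrho(x)=\varrho(x,1)=-1$; consequently the constant $f\equiv-1$ solves the fundamental ODE (\ref{f-eq}), since $f\varrho+f'(x\varrho-\varpi)=(-1)(-1)+0=1$. This is exactly the situation of the Proposition immediately preceding the statement, and it is what makes the normalisation compatible with the system (\ref{impl}).

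First I would show that, with $f\equiv-1$, the second equation of (\ref{impl}) is an identity equivalent to $v=\frac{y}{y+1}$: substituting $f\equiv-1$ turns $\frac1v f(\frac uv)=\frac1y f(\frac xy)-1$ into $-\frac1v=-\frac1y-1$, that is $\frac1v=\frac{y+1}{y}$. Thus the only content left in (\ref{impl}) is its first equation. Writing the $1$-homogeneous orbit function as $\mathscr{W}^{1/N}$ (so that the degree-$N$ rational $\mathscr{W}$ of the statement is its $N$-th power), the first equation $\mathscr{W}^{1/N}(u,v)=\mathscr{W}^{1/N}(x,y)$ is, along the branch through the identity, equivalent to $\mathscr{W}(u,\frac{y}{y+1})=\mathscr{W}(x,y)$, i.e.\ to (\ref{eq-w}) with $u=\mathcal{U}$. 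Since Theorem~\ref{thm1} together with the boundary conditions (\ref{bound}) yields the flow, the pair $\mathcal{U}\bl\frac{y}{y+1}$, with $\mathcal{U}$ the branch of (\ref{eq-w}) satisfying $\lim_{z\to0}z^{-1}\mathcal{U}(xz,yz)=x$, is precisely that flow; and algebraicity of $\mathcal{U}$ is immediate, because $\mathscr{W}$ and $\frac{y}{y+1}$ are rational, so (\ref{eq-w}) is a genuine polynomial equation for $\mathcal{U}$ over $\mathbb{R}(x,y)$.

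Next I would compute the vector field from (\ref{w-eq}) rather than by differentiating $\mathcal{U}$ directly. With $\varrho=-y^2$ and orbit function $\mathscr{W}^{1/N}$, using $(\mathscr{W}^{1/N})_x=\frac1N\mathscr{W}^{1/N-1}\mathscr{W}_x$ and clearing the common factor $N\mathscr{W}^{(N-1)/N}$, equation (\ref{w-eq}) becomes $N\mathscr{W}\varrho+\mathscr{W}_x(y\varpi-x\varrho)=0$; solving for $\varpi$ gives $\varpi=\frac{Ny\mathscr{W}}{\mathscr{W}_x}-xy$, a $2$-homogeneous rational function, which is the asserted first coordinate (the second being $-y^2$). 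The same computation yields $x\varrho-y\varpi=-\frac{Ny^2\mathscr{W}}{\mathscr{W}_x}\not\equiv0$, so the hypothesis $x\varrho-y\varpi\neq0$ of Theorem~\ref{thm1} is genuinely met, provided $\mathscr{W}_x\not\equiv0$, i.e.\ that $\mathscr{W}$ really depends on $x$. For the level, $(\mathscr{W}^{1/N})^N=\mathscr{W}$ is rational; if $(\mathscr{W}^{1/N})^M=\mathscr{W}^{M/N}$ were rational for some $0<M<N$, then writing $M/N=p/q$ in lowest terms with $q\ge2$ and using a B\'ezout relation $ap+bq=1$ would force $\mathscr{W}^{1/q}=(\mathscr{W}^{p/q})^a\mathscr{W}^b$ to be rational, contradicting the primitivity hypothesis that $\mathscr{W}$ is not a perfect power $\mathscr{V}^m$ with $m\ge2$. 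Hence the level is exactly $N$ in the sense of Definition~\ref{defin-ab}.

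The step I expect to be the main obstacle is the converse direction underlying the phrase that (\ref{impl}) together with the boundary conditions ``yields the flow'': Theorem~\ref{thm1} is stated as a necessary condition, so one must be sure that the branch of (\ref{eq-w}) singled out by the boundary condition is single-valued and returns a genuine flow, rather than merely a solution of the algebraic system. This is exactly the ramification issue flagged after Theorem~\ref{thm1}; the safe route is to work with the rescaled pair $(u^z,v^z)$ and the equation (\ref{z}) for small $z$, where the branch through the identity is unambiguous, and to confirm by the derivative-and-induction argument given there that the resulting $(u^z,v^z)$ satisfies the defining system (\ref{PDE-u}). For rational $\mathscr{W}$ there is of course no ramification and this last verification is purely formal.
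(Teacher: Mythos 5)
Your proposal is correct in substance, but it takes a genuinely different route from the paper's own justification. You read the whole statement off Theorem \ref{thm1}: specialise to the univariate normalisation $\varrho=-y^{2}$, note that $f\equiv-1$ solves (\ref{f-eq}) and makes the second equation of (\ref{impl}) an identity, identify the first equation (with orbit function $\mathscr{W}^{1/N}$) with (\ref{eq-w}), then recover $\varpi=\frac{Ny\mathscr{W}}{\mathscr{W}_{x}}-xy$ from (\ref{w-eq}) and the level from the B\'ezout argument. The paper instead verifies the translation equation \emph{directly}: writing $\phi^{z}=z^{-1}\phi(\m{x}z)=X\bl Y$, it uses only the $N$-homogeneity of $\mathscr{W}$ and (\ref{eq-w}) to show that $\mathscr{W}$ takes the same value on the coordinates of $\phi^{w}\circ\phi^{z}$ and of $\phi^{z+w}$; since the second coordinates visibly coincide ($\frac{Y}{Yw+1}=\frac{y}{y(z+w)+1}$), the first coordinates coincide for small $z,w$ with the correct choice of branch, which is exactly $\phi^{w}\circ\phi^{z}=\phi^{z+w}$. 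That computation is self-contained, never invokes Theorem \ref{thm1}, and works for \emph{any} $N$-homogeneous $\mathscr{W}$ (not only rational ones) --- a point the paper makes explicitly and then generalises to higher dimensions in Theorem \ref{thm2}. Your route extracts the vector field and the level more systematically, but the flow property itself rests on the converse of Theorem \ref{thm1} (that (\ref{impl}) plus the boundary conditions \emph{yields} a flow), which you rightly flag as the weak point; be aware that the ``derivative-and-induction'' argument you appeal to runs from (\ref{PDE-u}) to (\ref{z}), so to close your version you would either have to reverse it --- differentiate the two equations of (\ref{impl}) in $z$ and solve the resulting $2\times 2$ system for $\frac{\d}{\d z}u^{z}$ and $\frac{\d}{\d z}v^{z}$, recovering the autonomous ODE system and then invoking uniqueness of its solution through $(x,y)$ --- or simply fall back on the paper's direct cocycle computation.
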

It seems a fascinating result that the above algebraic equation yields a solution to (\ref{funk}). This result eluded us while working on \cite{alkauskas, alkauskas-un, alkauskas-ab}. It is essentially used in \cite{alkauskas-comm} where commutative projective plane flows are given a very transparent classification. Proposition \ref{prop-char} now can be double-verified directly as follows. Indeed, let $\phi=\mathcal{U}(x,y)\bl\frac{y}{y+1}$, and $\phi^{z}=z^{-1}\phi(\m{x}z)=X\bl Y$. Then
\begin{eqnarray*}
X=\frac{\mathcal{U}(xz,yz)}{z},\quad Y=\frac{y}{yz+1}.
\end{eqnarray*}
Because $\mathscr{W}$ is $N$-homogeneous and because of (\ref{eq-w}), we have:
\begin{eqnarray*}
\mathscr{W}\Big{(}\frac{\mathcal{U}(Xw,Yw)}{w},\frac{Y}{Yw+1}\Big{)}&=&
\frac{1}{w^{N}}\mathscr{W}\Big{(}\mathcal{U}(Xw,Yw),\frac{Yw}{Yw+1}\Big{)}=
\frac{1}{w^{N}}\mathscr{W}(Xw,Yw)\\
&=&\mathscr{W}(X,Y)
=\frac{1}{z^{N}}\mathscr{W}\Big{(}\mathcal{U}(xz,yz),\frac{yz}{yz+1}\Big{)}\\
&=&\frac{1}{z^{N}}\mathscr{W}(xz,yz)=\mathscr{W}(x,y)\\
&=&\mathscr{W}\Bigg{(}\frac{\mathcal{U}\big{(}x(z+w),y(z+w)\big{)}}{z+w},\frac{y}{y(z+w)+1}\Bigg{)}.
\end{eqnarray*} 
Now, it is enough to note that
\begin{eqnarray*}
\frac{Y}{Yw+1}=\frac{y}{y(z+w)+1}.
\end{eqnarray*}
Thus, for $z,w$ small enough, with a correct choice of a branch, we have
\begin{eqnarray*}
\frac{\mathcal{U}(Xw,Yw)}{w}=\frac{\mathcal{U}\big{(}(z+w)x,(z+w)y\big{)}}{z+w}.
\end{eqnarray*}
And this is just the first coordinate of the identity (\ref{funk}), if written as (for ramified flows this is preferable)
\begin{eqnarray*}
\phi^{w}\circ\phi^{z}=\phi^{z+w},\quad \phi^{z}(\m{x})=z^{-1}\phi(z\m{x}).
\end{eqnarray*}
This works for any $N$-homogeneous function $\mathscr{W}$ - if $\mathcal{U}$ is defined from (\ref{eq-w}), $\mathcal{U}(x,y)\bl\frac{y}{y+1}$ is a projective flow. However, only for algebraic flows we are in the domain of algebraic geometry we are primarily concerned with - birational transformations and rational vector fields.

\begin{Example}
Let $\mathscr{W}(x,y)=x+\sigma y$. This gives
\begin{eqnarray*}
\mathcal{U}(x,y)=\frac{\sigma y^2+xy+x}{y+1}.
\end{eqnarray*}
This function is exactly denoted by  $\mathcal{W}^{(1)}_{\sigma,0}(x,y)$ in (\cite{alkauskas}, top of p. 287).
\end{Example}
Now, suppose that we have a rational flow $\mathcal{U}(x,y)\bl\frac{y}{y+1}$ in an univariate form. This means that the equation (\ref{eq-w}) is a first degree equation in $\mathcal{U}$. One of the possibilites is 
\begin{eqnarray*}
\mathscr{W}(x,y)=\frac{xy^{N}+ay^{N+1}}{x+by},\quad a,b\in\mathbb{R},\quad a\neq b.
\end{eqnarray*}  
Solving (\ref{eq-w}), we obtain
\begin{eqnarray*}
\mathcal{U}(x,y)=\frac{b(x+ay)(y+1)^{N}-a(x+by)}{-(x+ay)(y+1)^{N}+(x+by)}\cdot\frac{y}{y+1}.
\end{eqnarray*}
For $a=\frac{\sigma}{N-\sigma\tau}$, $b=-\frac{1}{\tau}$, we get exactly the flow, which is denoted by $\mathcal{W}^{(N)}_{\sigma,\tau}$ in (\cite{alkauskas}, p. 289, Eq. (28)). Thus, not only steps II. and III. are superfluous in proving Theorem \ref{mthm}, but also Step IV. (see \cite{alkauskas}, Sect. 4.4), though it sheds some light into the problem, is also unnecessary! Calculations in (\cite{alkauskas}, p. 289-290) show that any rational flow in an univariate form is $1$-BIR conjugate to canonical solution $\phi_{N}$ of the same level $N$. 
 \begin{Example} Consider the vector field
 \begin{eqnarray*}
-\frac{x(3x^5+x^3y^2+2y^5)}{3(x^2+y^2)^2}\bl-\frac{y(3y^5+y^3x^2+2x^5)}{3(x^2+y^2)^2}.
\end{eqnarray*}
The basic ODE (\ref{f-eq}) has the solution
\begin{eqnarray*}
-\frac{x^2+1}{x^3}+\sigma\frac{x^5+x^3-x^2-1}{x^3}.
\end{eqnarray*}
Thus, we can find the flow itself from the equations, as given by (\ref{impl}). That, is,
\begin{eqnarray*}
\frac{u^3v^3}{u^5+u^3v^2-u^2v^3-v^5}=\frac{x^3y^3}{x^5+x^3y^2-x^2y^3-y^5},\quad 
\frac{u^2+v^2}{u^3}=\frac{x^2+y^2}{x^3}+1.
\end{eqnarray*}
Solving with MAPLE, we obtain the flow $u(x,y)\bl v(x,y)=u(x,y)\bl u(y,x)$, where
\begin{eqnarray*}
u(x,y)=\frac{x^3}{x^3+x^2+y^2}+\frac{xy^2}{(x^3+x^2+y^2)^{1/3}(y^3+x^2+y^2)^{2/3}}.
\end{eqnarray*}
Thus we described exactly the algebraic flow given as Example (9) in \cite{alkauskas-0}, p. 338  (this is repeated as Example 3 in \cite{alkauskas-ab}), but from the other end, starting from the vector field and using Theorem \ref{thm1}.\\

Now we will transform the flow $u(x,y)\bl u(y,x)$ into an univariate form. Let $A(x,y)=\frac{x^2y+y^3}{x^3}$; so, again, we choose now the right hand side of (\ref{f-eq}) to be $-1$. Calculating the vector field $\varpi'\bl\varrho'$ according to Proposition \ref{prop1}, gives
\begin{eqnarray*}
\varpi'=-\frac{4}{3}xy+\frac{1}{3}\frac{y^4}{x^2},\quad \varrho'=-y^2.
\end{eqnarray*} 
Now, the solution to (\ref{w-eq}) in this case is equal to $\frac{y^4}{x^3-y^3}$. This gives
\begin{eqnarray*}
\frac{\frac{y^4}{(y+1)^{4}}}{\mathcal{U}^{3}-\frac{y^3}{(y+1)^3}}=\frac{y^4}{x^3-y^3}\Longrightarrow \mathcal{U}(x,y)=\frac{(x^3+y^4)^{1/3}}{(y+1)^{4/3}}.
\end{eqnarray*}
 \end{Example}
 In \cite{alkauskas-comm} we explore projective flows with rational vector fields such that they commute. It appears that this can happen, apart from level $0$ rational flows which always commute, only for pairs of special algebraic flows of level $1$.\\
 
 Case I of Theorem 2 in \cite{alkauskas-ab} describes all abelian flows whose vector field is a pair of quadratic forms. Of course, now we see that the variety of abelian flows is far richer. And so Diagram in Figure 1 can be interpreted just as arithmetic classification of vector fields given by a pair of quadratic forms.\\
 
In \cite{alkauskas-ab}, we asked for classification of algebraic and abelian flows with rational vector fields in any dimension; see Problem 9 and Problem 10 (\cite{alkauskas-ab}, p. 737). The question when orbits are algebraic curves in higher dimensions may be very hard. For example, Jouanolou \cite{jouanolou} showed that the vector field $y^2\bl z^2\bl x^2$ does not have a rational first integral, so \emph{a posteriori} it does not have two integrals, and the flow is not abelian. Of course, classifying $3$-dimensional $2$-homogeneous rational vector fields with $2$ independent algebraic first integrals might be easier, though, as a contrast, the question on finding all algebraic flows might be even simpler with the results of the current paper in disposition, and this is demonstrated in Section \ref{induct}.\\  
 
There is a small mistake in the formulation of (\cite{alkauskas-ab}, Proposition 10). Of course in order the flow with a vector field given by a pair of integral quadratic forms $\varpi\bl\varrho$ to be non-abelian, it is not enough that $y\varpi-x\varrho$ has multiple roots, as the example $\varpi\bl\varrho=2xy\bl y^2$, $\phi=\frac{x}{(1-y)^2}\bl\frac{y}{1-y}$ shows. It is important that $\frac{\varrho}{y\varpi-x\varrho}$, after a possible reduction, still has a denominator with multiple roots. The correct formulation should read as follows.
\begin{itemize}
\item[$\star$]{\it Let $\varpi(x,y)\bl\varrho(x,y)$ be a pair of quadratic forms with integral (or rational, the conclusion is the same) coefficients that gives rise to a non-abelian flow. Then there exists a linear change such that...}
\end{itemize}  
 and the rest of the claim is as presented, with $\lambda\in\mathbb{Q}$. \\
 
 Indeed, if $y\varpi-x\varrho$ is a cube of a linear polynomial, we can linearly conjugate to achieve this cube to be equal to (a scalar multiple of) $y^3$. Thus, we may suppose
\begin{eqnarray*}
\varpi\bl\varrho=ax^2+bxy+cy^2\bl axy+by^2,\quad c\neq 0.
\end{eqnarray*}
Note that $a=0$ cannot happen, since then $\frac{\varrho}{y\varpi-x\varrho}$ has no mulitple roots in the denominator. The case $b\neq 0$ gives, after a linear conjugation, the item i) of Proposition 10, and the case $b=0$ - the item ii). Now, suppose $y\varpi-x\varrho$ factors into linear factors as $\ell^{2}k$, $\frac{\ell}{k}$ not a constant. Then after a linear conjugation we can achieve that $\varpi\bl\varrho=ax^2+bxy\bl cxy+dy^2$ (\cite{alkauskas}, Subsection 4.3). Then $y\varpi-x\varrho=xy((a-c)x+(b-d)y)$. This double root cannot be $y^2$, the root of $\varrho$, since then, once again, $\frac{\varrho}{y\varpi-x\varrho}$ does not contain a double root in the denominator. So this double root is $x$, and this implies $b=d$. This, after a linear conjugation, gives an item iii) in Proposition 10.\\

 To finish comments on \cite{alkauskas-ab}, note that neither in \cite{alkauskas-ab}, nor here we give a complete characterization of algebraic flows up to $1$-BIR equivalence, contrary to rational case described by Theorem \ref{mthm}. This problem, among other aspects of algebraic and abelian flows - solenoidal, symmetric algebraic flows, flows in an univariate form in a given algebraic function field, algebraic flows which share orbits (see the beginning of Subsection \ref{orbits-share}), addition formulas for abelian functions on orbits, elliptic abelian flows - is treated in \cite{alkauskas-ab2}. 

\subsection{The projective translation equation and unramified $2$-dimensional flows with rational vector fields}
Theorem 1 in \cite{alkauskas-un} remains valid, if we replace the phrase \emph{``is a pair of $2$-homogenic rational functions"} with \emph{``is a pair of quadratic forms"}.\\

Moreover, we can add more. Let $\phi$ be a projective flow with a rational vector field. There exists a $1$-BIR $\ell$ such that $\ell^{-1}\circ\phi\circ\ell$ has a vector field whose second coordinate vanish if and only if $\phi$ is an abelian flow of level $1$. This fact was of utmost importance in finding all commuting projective flows in \cite{alkauskas-comm}. Indeed, as is clear from Proposition \ref{prop1}, $\varrho'=0$ (prime, not derivative) for a certain rational $0$-homogeneous $A$ if and only if $1$-homogeneous solution to (\ref{w-eq}) is rational, and this happens exactly for abelian flows of level $1$. This case is treated in \cite{alkauskas-un}, Subsection 4.4. So, the phrase which is being inserted can read as ``is a pair of quadratic forms, or $\phi$ is an abelian flow of level $1$". \\ 

 At the moment it is not clear how one should approach the question of ramification of projective flows with general $2$-homogeneous vector fields, except in special examples. The very notion of ``unramified flow" seems to be new and important \cite{conlon}; for example, the notion of time parameter to vary over $\mathbb{C}$ as opposed to $\mathbb{R}$ is also non-standard practice in differential geometry, and revealing new phenomena, like ramification. For example, the vector field $x^2-xy\bl y^2-2xy$, briefly treated in \cite{alkauskas-un}, Subsecion 4.3, was shown to be indeed unramified in the Appendix to \cite{alkauskas-ab}. The proof of this involves rational addition formulas for elliptic functions with hexagonal period lattice, also birational transformations, and basic algebra of the number field $\mathbb{Q}(\sqrt{3})$. Thus, the proof is not trivial at all! The discovery of unramified non-rational flows is the second most important positive side-result of the flaw. This question is treated in \cite{alkauskas-un2}. For example, if $n$-dimensional vector field is given by a collection of  quadratic forms, we can act in a similar way as we did in \cite{alkauskas}, Subsection 4.3. The theory of unramified flows is in a way dual to the theory of algebraic flows (see Section \ref{induct}), since the intersection of the set of unramified flows and the set of algebraic flows is exactly the set of rational flows.\\

We also add what is now known concerning various problems described in \cite{alkauskas-un}, Section 5.\\

\underline{\emph{Problem 2}} was solved in case $N=3$ in \cite{alkauskas-super1}. The explicit formulas involve Jacobi elliptic functions. Also, there exists an irreducible representation of $\Sigma_{N+1}\oplus\mathbb{Z}_{2}$ of dimension $N+1$, and this leads to the superflow, whose symmetry group is of order $2(N+1)!$. In \cite{alkauskas-super2} we pose Problem 13 asking whether this is in fact the smallest possible symmetry group of a superflow in dimension $(N+1)$, $N\geq 2$.\\

\underline{\emph{Problem 3}} was solved, as mentioned, for groups $O(2)$, $O(3)$, and is under development for $U(2)$; see \cite{alkauskas-super1,alkauskas-super2,alkauskas-super3} (the corresponding dimensions of Lie algebras are, respectively, $1$, $3$ and $4$). There is a tiny slip in the terminology in \cite{alkauskas-un}, p. 906. $\Sigma_{4}$ and $\Sigma_{4}'$ are not contragradient representations, but one is obtained from the other tensoring with a $1$-dimensional non-trivial (determinant) representation.\\

 Next, I would like the reader to pay attention to p. 906. One of the excerpts reads as follows.
\begin{itemize}
\item[$\bigstar$]{\it One can check directly that the vector field
\begin{eqnarray*}
\mathbf{Q}(\m{x})=
\frac{y^3z-yz^3}{x^2+y^2+z^2}\bl\frac{z^3x-zx^3}{x^2+y^2+z^2}\bl\frac{x^3y-xy^3}{x^2+y^2+z^2}
\end{eqnarray*}
is invariant under conjugation with all matrices from $\Sigma'_{4}$. Up to the constant factor, this is the only vector field whose common denominator} (a typo in \cite{alkauskas-un}, not numerator) {\it is of degree at most $2$; so, integrating this vector field (solving the same PDE with different boundary conditions), we get a $\Sigma'_{4}$-superflow, essentially different from the $\Sigma_{4}$-superflow. One might wonder what new special functions occur in the analytic formulas. We leave this very promising side of investigations of the projective translation equation for the future.}
\end{itemize}
This problem was solved in \cite{alkauskas-super1}. It appears that, generically, the orbits of this superflow are space curves of genus $9$, given by $\{x^2+y^2+z^2=a, x^4+y^4+z^4=b, a,b\in\mathbb{R}\}$. Via a reduction of the differential system, these curves and abelian functions on them can be reduced to genus $2$, and yet with another reduction (this works like a miracle) the genus can be lowered to $1$, and thus this flow can be explicitly integarted in terms of Weierstrass elliptic functions, but in a rather complicated way. For example, let $\xi=\frac{b}{a^2}$. Then this elliptic function has a square period lattice only in two cases
\begin{eqnarray*}
\xi=\frac{5}{9},\text{ or }\xi=\frac{1}{18}\sqrt[3]{16\sqrt{2}+13}-\frac{1}{18}\sqrt[3]{16\sqrt{2}-13}+\frac{7}{18}=0.4535087845_{+}.
\end{eqnarray*}     

\emph{Problem 5} is treated in \cite{alkauskas-dim3} (see the current paper, Section \ref{induct}). \emph{Problem 6} is treated in \cite{alkauskas-un2}, and \emph{Problem 7} - in \cite{alkauskas-quasi}. \emph{Problem 4} and \emph{Problem 8} are left for the future.   
\section{Inductive construction of algebraic or rational projective flows}
\label{induct}
Next, we continue with additions to \cite{alkauskas, alkauskas-ab}. In this section we show that the results of this paper put investigations of higher dimensional rational and algebraic projective flows in a much more optimistic perspective, as compared to the one described in (\cite{alkauskas}, Subsection 5.6).\\

As a generalization to calculations just after Proposition \ref{prop-char}, we prove the following second most important (after Theorem \ref{thm1}) result of this paper. Now we choose three ``space" variables to be $x,y,z$, and ``time" parameters to be $s,t$. 
\begin{thm}
\label{thm2}
Suppose, $\phi(\m{x})=u(x,y)\bl v(x,y)$ is a $2$-dimensional projective flow. Let $N\in\mathbb{N}$, and suppose $\mathscr{W}$ is a $N$-homogeneous function in $3$ variables. Let us define the function $\mathcal{T}(x,y,z)$ by the equation
\begin{eqnarray}
\mathscr{W}\big{(}u(x,y),v(x,y),\mathcal{T}(x,y,z)\big{)}=\mathscr{W}(x,y,z)
\label{constr}
\end{eqnarray}
and the boundary condition $\lim\limits_{t\rightarrow 0}\frac{\mathcal{T}(xt,yt,zt)}{t}=z$. Then
\begin{eqnarray*}
\Phi(x,y,z)=u(x,y)\bl v(x,y)\bl \mathcal{T}(x,y,z)
\end{eqnarray*}
is a projective flow. If $\phi$ is an algebraic flow with rational vector field and $\mathscr{W}$ is a rational function, then $\Phi$ is algebraic flow (with rational vector field). If $\phi$ is a rational flow, $\mathscr{W}$ is rational, and, moreover, $\mathscr{W}$ is a linear-fractional expression for $z$, then $\Phi$ is a rational flow.
\end{thm}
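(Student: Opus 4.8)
The plan is to mimic, in one dimension higher, the direct verification carried out just after Proposition~\ref{prop-char}, reducing everything to the composition form $\Phi^{s}\circ\Phi^{t}=\Phi^{s+t}$ of the translation equation, where $\Phi^{t}(\m{x})=t^{-1}\Phi(t\m{x})$. Since the first two coordinates of $\Phi$ are the given flow $\phi=u\bl v$, which depends only on $(x,y)$ and already satisfies $\phi^{s}\circ\phi^{t}=\phi^{s+t}$, the only thing to check is that the third coordinate $\mathcal{T}$ composes correctly. Write $u^{t}(x,y)=t^{-1}u(tx,ty)$, and likewise $v^{t}$, $\mathcal{T}^{t}$. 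The engine of the whole argument is that $\mathscr{W}$ is a first integral of the $t$-flow: applying $N$-homogeneity to (\ref{constr}) after the substitution $(x,y,z)\mapsto(tx,ty,tz)$ and dividing by $t^{N}$ gives
\begin{eqnarray}
\mathscr{W}\big{(}u^{t}(x,y),v^{t}(x,y),\mathcal{T}^{t}(x,y,z)\big{)}=\mathscr{W}(x,y,z). \label{inv-star}
\end{eqnarray}

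First I would set $(X,Y,Z)=\Phi^{t}(x,y,z)$ and compute the third coordinate of $\Phi^{s}(X,Y,Z)$, namely $\mathcal{T}^{s}(X,Y,Z)$. Using (\ref{inv-star}) at time $s$ for the point $(X,Y,Z)$, then the flow property of $u\bl v$ to replace $u^{s}(X,Y),v^{s}(X,Y)$ by $u^{s+t}(x,y),v^{s+t}(x,y)$, and finally (\ref{inv-star}) at time $t$ for $(x,y,z)$, I obtain
\begin{eqnarray*}
\mathscr{W}\big{(}u^{s+t}(x,y),v^{s+t}(x,y),\mathcal{T}^{s}(X,Y,Z)\big{)}=\mathscr{W}(x,y,z).
\end{eqnarray*}
On the other hand, (\ref{inv-star}) at time $s+t$ says the same identity holds with $\mathcal{T}^{s}(X,Y,Z)$ replaced by $\mathcal{T}^{s+t}(x,y,z)$. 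Thus both quantities solve the single scalar equation $\mathscr{W}\big{(}u^{s+t}(x,y),v^{s+t}(x,y),w\big{)}=\mathscr{W}(x,y,z)$ in the unknown $w$, with the first two arguments of $\mathscr{W}$ fixed.

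The hard part will be concluding $\mathcal{T}^{s}(X,Y,Z)=\mathcal{T}^{s+t}(x,y,z)$ from this, i.e.\ the branch/uniqueness issue, which is exactly where the boundary condition is needed. Since $\mathscr{W}$ genuinely involves its third slot (otherwise $\mathcal{T}$ is not defined), $\partial_{3}\mathscr{W}\not\equiv 0$, so near the diagonal $w\mapsto\mathscr{W}(\,\cdot\,,\cdot\,,w)$ is locally invertible by the implicit function theorem and the scalar equation has a unique solution in the relevant branch. Both candidate solutions are continuous in $s$ and agree at $s=0$, where $\Phi^{0}=\mathrm{id}$ forces $\mathcal{T}^{0}(X,Y,Z)=Z=\mathcal{T}^{t}(x,y,z)=\mathcal{T}^{0+t}(x,y,z)$; the given boundary condition $\lim_{t\to0}t^{-1}\mathcal{T}(tx,ty,tz)=z$ is precisely what pins $\mathcal{T}^{0}=z$ and selects this branch. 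Hence the two solutions coincide for $s,t$ small, and then everywhere by continuation, proving $\Phi$ is a flow.

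Finally, for the structural claims I would read them off the defining equation (\ref{constr}) and its differential. If $\phi$ is algebraic and $\mathscr{W}$ rational, then (\ref{constr}) is a polynomial relation for $\mathcal{T}$ with coefficients algebraic over $\mathbb{R}(x,y,z)$, so $\mathcal{T}$ is algebraic; and differentiating (\ref{inv-star}) at $t=0$ gives the third vector-field component $\dot{\mathcal{T}}=-(\mathscr{W}_{1}\varpi+\mathscr{W}_{2}\varrho)/\mathscr{W}_{3}$, a $2$-homogeneous rational function since $\varpi,\varrho$ and the partials $\mathscr{W}_{i}$ are rational, so the vector field is rational. If moreover $\phi$ is rational and $\mathscr{W}$ is linear-fractional in $z$, then (\ref{constr}) becomes a linear-fractional (hence, after clearing denominators, linear) equation in $\mathcal{T}$ with coefficients rational in $(x,y)$, whose solution $\mathcal{T}$ is rational in $(x,y,z)$; together with rational $u,v$ this makes $\Phi$ rational.
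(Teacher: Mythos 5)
Your proposal is correct and follows essentially the same route as the paper: both derive the invariance of $\mathscr{W}$ under the scaled flow from $N$-homogeneity and the defining equation, chain it through times $s$, $t$, and $s+t$, and use the boundary condition to pick the branch for which the third coordinates agree for small $s,t$; your computation of the third vector-field component from differentiating at $t=0$ is also exactly the paper's argument for rationality of the vector field. The only difference is cosmetic: you spell out the branch-uniqueness step via the implicit function theorem, where the paper simply appeals to ``the correct choice of the branch.''
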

\begin{proof}The proof mimics the calculations after Proposition \ref{prop-char}. Indeed, let
\begin{eqnarray*}
X=\frac{u(xt,yt)}{t},\quad Y=\frac{v(xt,yt)}{t},\quad 
Z=\frac{\mathcal{T}(xt,yt,zt)}{t}.
\end{eqnarray*} 
Because $\mathscr{W}$ is $N$-homogeneous and because of (\ref{constr}), we have:
\begin{eqnarray*}
& &\mathscr{W}\Big{(}\frac{u(Xs,Ys)}{s},\frac{v(Xs,Ys)}{s},\frac{\mathcal{T}(Xs,Ys,Zs)}{s}\Big{)}\\
&=&
\frac{1}{s^{N}}\mathscr{W}\Big{(}u(Xs,Ys),v(Xs,Ys),\mathcal{T}(Xs,Ys,Zs)\Big{)}=
\frac{1}{s^{N}}\mathscr{W}(Xs,Ys,Zs)\\
&=&\mathscr{W}(X,Y,Z)
=\frac{1}{t^{N}}\mathscr{W}\Big{(}u(xt,yt),v(xt,yt),\mathcal{T}(xt,yt,zt)\Big{)}\\
&=&\frac{1}{t^{N}}\mathscr{W}(xt,yt,zt)=\mathscr{W}(x,y,z)\\
&=&\mathscr{W}\Bigg{(}\frac{u\big{(}x(s+t),y(s+t)\big{)}}{s+t},\frac{v\big{(}x(s+t),y(s+t)\big{)}}{s+t},\frac{\mathcal{T}\big{(}x(s+t),y(s+t),z(s+t)\big{)}}{s+t}\Bigg{)}.
\end{eqnarray*} 
Now, in the above, the first term and the last term have the first and the second arguments that coincide, since $\phi$ is a projective flow. For $s,t$ small enough, due to the boundary condition, with the correct choice of the branch the third argument then will coincide, too. This shows that $u\bl v\bl\mathcal{T}$ is a projective flow. The rest of the statements of the Theorem are immediate, though very important! For example, suppose $\phi$ has a rational vector field $\varpi(x,y)\bl\varrho(x,y)$. Lets us differentiate
\begin{eqnarray*}
\mathscr{W}(u^{t}(x,y),v^{t}(x,y),\mathcal{T}^{t}(x,y,z))=\mathscr{W}(x,y,z) 
\end{eqnarray*} 
with respect to $t$, and put $t=0$. Let
\begin{eqnarray*}
\frac{\d}{\d t}\frac{\mathcal{T}(xt,yt,zt)}{t}\Big{|}_{t=0}=\sigma(x,y,z).
\end{eqnarray*}
This gives
\begin{eqnarray*}
\mathscr{W}_{x}\varpi+\mathscr{W}_{y}\varrho+\mathscr{W}_{z}\sigma=0.
\end{eqnarray*}
Of course, we know this in advance, since $\mathscr{W}=\mathrm{const.}$ is the first integral. This shows that $\sigma$ is rational and $2$-homogeneous, if $\mathscr{W}$ is rational.
\end{proof}
Now, let $\ell$ be a $1$-homogeneous birational transformation of $\mathbb{C}^{3}$. Then, as usual, $\ell^{-1}\circ(u\bl v\bl\mathcal{T})\circ\ell(x,y,z)$ is an algebraic projective flow with rational vector field, if $u\bl v\bl\mathcal{T}$ is such. Now, when we have constructed an algebraic flow $u(x,y,z)\bl v(x,y,z)\bl w(x,y,z)$, we can extend it to dimension $4$ with a help of homogeneous function $\mathscr{V}$ in $4$ variables, and so on. \\

To continue presenting open problems about projective flows, started in \cite{alkauskas-un}, and continued in \cite{alkauskas-ab, alkauskas-comm,alkauskas-super1,alkauskas-super2,alkauskas-super3}, we are lead to the following. But first, let $n\in\mathbb{N}$, $\m{x}=(x_{1},\ldots,x_{n})$, and let $J(\m{x})$ be any rational $1$-homogeneous function. A vector field $x_{1}J(\m{x})\bl x_{2}J(\m{x})\bl\cdots\bl x_{n}J(\m{x})$ produces a flow
\begin{eqnarray*}
\frac{x_{1}}{1-J(\m{x})}\bl\cdots\bl\frac{x_{n}}{1-J(\m{x})}.
\end{eqnarray*}
This is a direct analogue of rational flow of level $0$ in any dimension. If an algebraic flow of dimension $n$ has orbits as curves $\mathscr{W}_{1}=\mathrm{const.},\ldots,\mathscr{W}_{n-1}=\mathrm{const}.$, where $\mathscr{W}_{i}$ are homogeneous functions,  we can assume that either all the homogeneity degrees are $0$ (then we have the above case), or all of them are non-zero. Indeed, otherwise, if the homogeneity degree of $\mathscr{W}_{2}$ is $0$ and that of $\mathscr{W}_{1}$ is not, we replace $\mathscr{W}_{2}$ with $\mathscr{W}_{1}\cdot\mathscr{W}_{2}$. However, construction of Theorem \ref{thm2} works even in the case $N=0$. So, we ask the following.
\begin{prob14}Does the construction of Theorem \ref{thm2} and a subsequent conjugation with a $1$-BIR produce all algebraic projective flows with rational vector fields in dimension $3$, and, inductively, in any dimension?
\end{prob14}
This question is treated in detail \cite{alkauskas-dim3}. It is hopeful to solve it even without knowing the structure of the Cremona group of $P^{n-1}(\mathbb{K})$ (where $\mathbb{K}$ is $\mathbb{C}$ or $\mathbb{R}$) in higher dimensions. Indeed, $1$-homogeneous birational transformations of $\mathbb{K}^{n}$ are described in terms of Cremona group of $P^{n-1}(\mathbb{K})$. So, when $n=3$, this Cremona group (over $\mathbb{C}$) contains not only linear-fractional transformations, but also the so called \emph{quadratic transformations}, and the solution to the above question might depend which field we are working with.\\

Let now $n\in\mathbb{N}$. If the answer to the above question is affirmative, this will imply the following very strong corollary. Let $\Phi$ be an algebraic projective flow with a rational vector field. Then there exists a $1$-BIR $\ell$ such that $\Psi=\ell^{-1}\circ\Phi\circ\ell$ is an algebraic flow of the following form. Let $\m{x}_{i}=(x_{1},x_{2},\ldots,x_{i})$. Then
\begin{eqnarray*}
\Psi=u_{1}(\m{x}_{1})\bl u_{2}(\m{x}_{2})\bl\cdots\bl u_{n}(\m{x}_{n}).
\end{eqnarray*}
Since the flow $\Phi$ is algebraic, it has $(n-1)$-independent rational homogeneous first integrals $\mathscr{W}_{i}$, $1\leq i\leq n-1$. However, we have $(n-1)$ homogeneous equations for $n$ functions $u_{i}$, $1\leq i\leq n$; for example, such is the first equation of (\ref{impl}). So, one more non-homogeneous equation is needed, an analogue of the second equation of (\ref{impl}), to ensure that we have a projective flow. More importantly, this will characterise only abelian flows, so some deeper arithmetic is needed the claim that these equations produce not only abelian, but an algebraic flow.\\

However, the situation with a flow $\Psi$ is much simpler. Since $\m{x}_{1}=x_{1}$, then $u_{1}(x_{1})$ is a one dimensional projective flow with a rational vector field, and so
\begin{eqnarray*}
u_{1}(x_{1})=\frac{x_{1}}{1-cx_{1}}\text{ for a certain }c\in\mathbb{R}.
\end{eqnarray*}
Since $u_{2}(\m{x}_{2})=u_{2}(x_{1},x_{2})$, let $\mathscr{V}_{1}$ be the integral of the flow $u_{1}\bl u_{2}$. Then
\begin{eqnarray*}
\mathscr{V}_{1}\Big{(}\frac{x_{1}}{1-cx_{1}},u_{2}\Big{)}=\mathscr{V}_{1}(x_{1},x_{2}).
\end{eqnarray*}
Thus, this gives an algebraic equation for $u_{2}$, and the correct branch, compatible with a boundary condition, gives $u_{2}$. We proceed in the same way to get explicit equation for $u_{i}$, $2\leq i\leq n$. This demonstrates the strong consequences the solution to Problem 14 might have.   
\begin{Example} Consider the canonical flow of level $N$, only after conjugating with a linear involution $i_{0}(x,y)=(y,x)$. So, let $\widehat{\phi}_
{N}=u\bl v=\frac{x}{x+1}\bl y(x+1)^{N-1}$. Let $\mathscr{W}(x,y,z)=z(x^2+xy)$. Form the above construction (Theorem \ref{thm2}), we get a $3$-dimensional rational flow
\begin{eqnarray*}
\Phi_{N}=u(x,y)\bl v(x,y)\bl\mathcal{T}(x,y,z)=\frac{x}{x+1}\bl y(x+1)^{N-1}\bl
\frac{z(x+y)(x+1)^{2}}{x+(x+1)^{N}y}.
\end{eqnarray*}
Let $\ell(x,y,z)$ be a $1$-BIR
\begin{eqnarray*}
\ell(x,y,z)=x\bl y\bl\frac{yz}{x+y},\quad \ell^{-1}(x,y,z)=x\bl y\bl\frac{(x+y)z}{y}.
\end{eqnarray*}
By a direct calculation,
\begin{eqnarray*}
\ell^{-1}\circ\Phi_{N}\circ\ell(\m{x})=\frac{x}{x+1}\bl y(x+1)^{N-1}\circ z(x+1)^{2-N}.
\end{eqnarray*}
Now, $\mathrm{g.c.d.}(N,1-N)=1$. We have seen (\cite{alkauskas}, p. 325) that with the help of $1$-BIR involutions this flow can be transformed into the flow
\begin{eqnarray*}
\phi_{1}(x,y,z)=x\bl\frac{y}{z+1}\bl\frac{z}{z+1}.
\end{eqnarray*}  
So, as far as this example is concerned, the answer to question in Problem 5 in \cite{alkauskas-un} is affirmative.  
\end{Example}
We see that flows $\Phi_{N}$ are all $1$-BIR (in dimension $3$) conjugate, though flows $\phi_{N}$ (in dimension $2$) are not. Related to this, it is apt to state the result, first formulated in \cite{alkauskas-super2}, which shows that projective flows in dimensional $n$ might be considered as general flows in dimension $n-\frac{1}{2}$, and via (\ref{funk}) \emph{all} flows in $\mathbb{R}^{n-1}$, satisfying the translation equation $F(F(\m{x},s),t)=F(\m{x},s+t)$ might be described.
\begin{prop}
\label{prop-ner}
Any flow in $\mathbb{R}^{n}$ is a section on a hyperplane of a $(n+1)$-dimensional projective flow. In other words, for any flow $F(\m{x},t):\mathbb{R}^{n}\times\mathbb{R}\mapsto\mathbb{R}^{n}$, there exists a projective flow $\phi:\mathbb{R}^{n+1}\mapsto\mathbb{R}^{n+1}$, such that
\begin{eqnarray*}
F(\m{x},t)=\frac{1}{t}\phi\Big{(}(\m{x},1)t\Big{)}=\frac{1}{t}\phi\big{(}\m{x}t,t\big{)}.
\end{eqnarray*}
\end{prop}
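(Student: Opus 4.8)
The plan is to recast the projective translation equation in the composition form already recorded at the end of Subsection~\ref{cor-alg} and then read off the embedding. Recall that for $\phi^{z}(\m{x})=z^{-1}\phi(z\m{x})$ the functional equation (\ref{funk}) together with the boundary conditions (\ref{bound}) is equivalent to the one-parameter group law $\phi^{s}\circ\phi^{z}=\phi^{s+z}$ with $\phi^{0}=\mathrm{id}$, and that the very definition of $\phi^{z}$ forces the homogeneity $\phi^{t}(\lambda\m{x})=\lambda\,\phi^{t\lambda}(\m{x})$. So the idea is to build such a family $\{\phi^{t}\}$ on $\mathbb{R}^{n+1}$ directly from $F$, choosing the extra coordinate so that it is simply transported unchanged; then $\phi:=\phi^{1}$ is the desired projective flow.

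Concretely, writing a point of $\mathbb{R}^{n+1}$ as $(\m{y},w)$ with $\m{y}\in\mathbb{R}^{n}$ and $w\neq 0$, I would set
\[
\phi(\m{y},w)=w\,F\Big{(}\frac{\m{y}}{w},w\Big{)}\bl w,\qquad\text{equivalently}\qquad \phi^{t}(\m{y},w)=w\,F\Big{(}\frac{\m{y}}{w},tw\Big{)}\bl w.
\]
The section formula is then immediate: $\frac{1}{t}\phi(\m{x}t,t)=\frac{1}{t}\big{(}t\,F(\m{x},t)\bl t\big{)}=F(\m{x},t)\bl 1$, whose first $n$ coordinates are exactly $F(\m{x},t)$, as claimed.

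It then remains to check the three structural properties. Homogeneity is automatic, since $\phi^{t}(\m{y},w)=t^{-1}\phi(t\m{y},tw)$ by construction. The boundary condition reduces to $F(\cdot,0)=\mathrm{id}$: the last coordinate of $z^{-1}\phi(z\m{y},zw)$ is identically $w$, while the first $n$ read $w\,F(\m{y}/w,zw)\to w\cdot(\m{y}/w)=\m{y}$ as $z\to 0$. Finally, the group law is precisely the translation equation for $F$, for computing
\[
\phi^{s}\big{(}\phi^{t}(\m{y},w)\big{)}=\phi^{s}\Big{(}w\,F(\tfrac{\m{y}}{w},tw)\bl w\Big{)}=w\,F\Big{(}F(\tfrac{\m{y}}{w},tw),sw\Big{)}\bl w,
\]
and $F(F(\m{z},tw),sw)=F(\m{z},(s+t)w)$ turns the right-hand side into $w\,F(\m{y}/w,(s+t)w)\bl w=\phi^{s+t}(\m{y},w)$.

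These computations are entirely routine; the only genuine content is the translation equation of $F$, everything else being bookkeeping of the homogeneity. The one step that requires a moment's thought — and which I regard as the crux — is the decision to hold the auxiliary coordinate fixed at $w$: this converts the projective rescaling $\m{x}\mapsto\lambda\m{x}$ into the time reparametrization $t\mapsto tw$, so that the extra degree of homogeneity of an $(n+1)$-dimensional projective flow is spent exactly on restoring the time variable of an ordinary flow in dimension $n$ (the informal ``dimension $n-\tfrac12$''). The only point needing care is regularity: the formula is defined on $w\neq 0$, so $\phi$ lives on the two half-spaces away from $\{w=0\}$, which is all the section at $\{w=1\}$ requires; no claim is made about extending $\phi$ across $w=0$, and none is needed.
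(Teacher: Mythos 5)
Your proof is correct, but it takes a genuinely different route from the paper's. The paper disposes of the proposition in a single sentence at the infinitesimal level: any vector field on $\mathbb{R}^{n}$ is the section on the hyperplane $x_{n+1}=1$ of a $2$-homogeneous vector field on $\mathbb{R}^{n+1}$ (homogenize the components to $w^{2}v(\m{y}/w)$ and append a zero last coordinate), and integrating that homogeneous field produces the desired projective flow. You instead work at the level of the flow itself, writing the explicit closed formula
\[
\phi(\m{y},w)=w\,F\Big(\tfrac{\m{y}}{w},w\Big)\bl w
\]
and verifying the group law $\phi^{s}\circ\phi^{t}=\phi^{s+t}$ directly from the translation equation of $F$. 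The two constructions agree --- your $\phi$ is exactly the integral of the paper's homogenized vector field, as one sees by computing $\frac{\d}{\d t}\phi^{t}(\m{y},w)\big|_{t=0}=w^{2}v(\m{y}/w)\bl 0$ --- but they buy different things. The paper's argument is shorter and sits naturally in its vector-field-centric framework, at the cost of implicitly assuming that $F$ has a vector field and that the homogenized field integrates to a flow. Your argument needs no differentiability at all: it uses only $F(F(\m{x},s),t)=F(\m{x},s+t)$ and $F(\cdot,0)=\mathrm{id}$, so it covers merely continuous flows and delivers the projective flow in closed form rather than as the solution of an ODE. Your closing remark about the domain $w\neq 0$ is a genuine point of care that the paper's one-liner glosses over, and your identification of the mechanism --- the extra degree of homogeneity being spent on the reparametrization $t\mapsto tw$ --- is precisely the ``dimension $n-\tfrac12$'' heuristic the paper states without proof.
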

This is just a consequence of the fact that any vector field in $\mathbb{R}^{n}$ is a section on the hyperplane $x_{n+1}=1$ of a $2$-homogeneous vector field in $\mathbb{R}^{n+1}$. Thus, (\ref{funk}) is not in this sense a special example of a translation equation, but rather a reduction of a dimension by $``\frac{1}{2}"$.  \\

We can say even more. To describe ``affine" flows in dimension $n$ we need $n$ space variables and $1$ time variable, $(n+1)$ variables in total. To describe an affine flow as a section of a projective flow on a hyperplane $x_{n+1}=1$, we also need $(n+1)$ variables. Thus, projective flows which are needed to describe affine flows are of the same level of complexity; in the special case when vector field is given by a collection of $2$-homogeneous rational functions, this complexity is even lower. The conclusion is that investigation of projective flows is a more efficient way to study affine flows! Moreover, in projective flow case time parameter is integrated into a collection of space parameters, and so the questions of algebraic, rational flows is unambiguous, whereas the same questions for affine flows are ambiguous. For example, what is the definition of ``affine rational flow" $F(x,t)$ in dimension $1$? Both variables are of a very different nature, so we may define rational flows as follows: for any fixed $t$, $F(x,t)$ is a rational function in $x$. According to this definition a $1$-dimensional vector field $\varpi=x$ produces a flow $F(x,t)=xe^{t}$, which, based on the above definition, is a rational flow. On the other hand, we may define affine flow with a rational vector field to be \emph{rational} or \emph{algebraic}, if it is a section of a projective flow which, respectively, is rational or algebraic. This definition is not the only possible, but it is unambiguous. 

\section{Symmetric rational flows}
The last three Sections are all in the spirit of (\cite{alkauskas}, Sect. 5.3) and describe three further various aspects of rational plane flows.\\

Here we repeat the definition from (\cite{alkauskas}, Sect. 5.3).
\begin{defin}
Let $i$ be a $1$-BIR involution. We say that a projective flows $\phi(\m{x})$ is \emph{$i$-symmetric}, if $\phi$ is invariant under conjugation with $i$: $i\circ\phi\circ i=\phi$.
\end{defin}

In \cite{alkauskas}, Appendix A.2 it is proved that there are four types of involutions. In Subsection 5.3 we found all rational flows symmetric with respect to a linear involution $i_{0}(x,y)=(y,x)$. Note that this subject has ramified significantly: in \cite{alkauskas-super1,alkauskas-super2,alkauskas-super3} we investigate flows symmetric with respect to bigger linear groups. If these flows satisfy the conditions of minimality and uniqueness, they are called \emph{superflows}. These all turn to be abelian flows. Some of them (reducible $2$-dimensional flows over $\mathbb{C}$) turn out to be algebraic, or even rational. For example, the  flow
\begin{eqnarray}
\phi_{\mathrm{sph},\infty}(x,y)=(x-y)^2+x\bl (x-y)^2+y
\label{sph}
\end{eqnarray}
(see \cite{alkauskas-0}, p. 338, and \cite{alkauskas}, p. 286) has an infinite group of symmetries, and is the superflow for a cyclic group of order $6$ generated by the matrix $\gamma=\begin{pmatrix}
\zeta & 0\\
\zeta+\zeta^{-1}&-\zeta^{-1}
\end{pmatrix}$, where $\zeta=e^{\frac{2\pi i}{3}}$ \cite{alkauskas-super3}. The fact that this superflow is reducible is non-trivial - it amounts to showing that its group of symmetries, whose Lie algebra is isomorphic to a unique $2$-dimensional non-commutative Lie algebra, has no finite non-commutative subgroups. Moreover, this rational flows $\phi_{\mathrm{sph},\infty}$ is also solenoidal; see Section \ref{solenoidal} of this paper.\\

In this Subsection we lay foundations for investigation of projective flows with rational vector fields whose group of symmetries are finite subgroups of a group of $1$-homogeneous birational transformations of $\mathbb{R}^{N}$, where $N$ is the dimension of a flow. We denote this group as in \cite{alkauskas}; that is, $Bir_{\mathrm{h}}(\mathbb{R}^{N})$.\\

So, the task of this Subsection is to find all rational flows symmetric with respect to a non-linear $1$-homogeneous involution $i(x\bl y)=\frac{y^2}{x}\bl y$, and henceforth $i$ has this fixed meaning. This involutions plays an important role in describing rational flows: for example, see \cite{alkauskas}, p. 289. \\

First, we will find the condition on the vector field so that it is invariant under conjugation with $i$. Note that $i=j\circ i_{0}$, where 
\begin{eqnarray*}
i_{0}(x\bl y)=y\bl x,\quad j(x\bl y)=\frac{x^2}{y}\bl x.
\end{eqnarray*} 
Now, $j$ is a $1$-BIR of the form (\ref{bir}). Suppose, a flow $\phi$ is invariant under conjugation with $i$. Let its vector field $\varpi\bl \varrho$. We get
\begin{eqnarray*}
i_{0}\circ j^{-1}\circ(\varpi\bl\varrho)\circ j\circ i_{0}=\varpi\bl \varrho.
\end{eqnarray*}
The vector field $j^{-1}\circ(\varpi\bl\varrho)\circ j$ can be calculated using The formula from Proposition \ref{conjug}. In our case, $A=\frac{x}{y}$. This gives
\begin{eqnarray*}
j^{-1}\circ(\varpi\bl\varrho)\circ j=\frac{x^2}{y^2}\varrho\bl\frac{2x}{y}\varrho-\varpi=\varpi'\bl\varrho'. 
\end{eqnarray*}
So, 
\begin{eqnarray*}
i_{0}\circ(\varpi'\bl\varrho')\circ i_{0}=\frac{2y}{x}\varrho(y,x)-\varpi(y,x)\bl\frac{y^2}{x^2}\varrho(y,x).
\end{eqnarray*}
Suppose $i\circ\phi\circ i=\phi$. Since the above is the vector field for the flow $i\circ\phi\circ i$, we get that
\begin{eqnarray*}
\frac{2y}{x}\varrho(y,x)-\varpi(y,x)\bl\frac{y^2}{x^2}\varrho(y,x)
=\varpi(x,y)\bl\varrho(x,y).
\end{eqnarray*}
Since $\varpi$ and $\varrho$ are $2$-homogeneous functions, these equalities can be rewritten as 
\begin{eqnarray*}
2xy\varrho\Big{(}\frac{y}{x},1\Big{)}=\varpi(x,y)+\varpi(y,x),\quad
\varrho\Big{(}\frac{y}{x},1\Big{)}=\varrho\Big{(}\frac{x}{y},1\Big{)}.
\end{eqnarray*}
Now we see that if the first equality is satisfied, the second is satisfied automatically. This situation is completely analogous (though more complicated) to the situation investigated in \cite{alkauskas}, Subsection 5.3. Namely, if the flow $\phi$ is invariant under conjugation with $i_{0}$, then this gives two equations $\varpi(x,y)=\varrho(y,x)$, $\varpi(y,x)=\varrho(x,y)$. So, in the latter case, any one of the two equations is the consequence of the other - this is rather a tautology.\\

 Therefore, in order the vector field $\varpi\bl\varrho$ to be invariant under conjugation with $i$, it is necessary and sufficient that  
\begin{eqnarray}
\frac{2x}{y}\varrho(x,y)=\varpi(x,y)+\varpi(y,x).
\label{symm}
\end{eqnarray}
\subsection{Rational flows of level $0$} Let $J(x,y)$ be any $1$-homogeneous rational function. We know that the vector field of the rational flow $\frac{x}{1-J(x,y)}\bl \frac{y}{1-J(x,y)}$ of level $0$ is given by $xJ(x,y)\bl y J(x,y)$. If this is $i$-invariant, the condition (\ref{symm}) gives $xJ(x,y)=yJ(y,x)$. Therefore, we obtain the following characterization. 
\begin{prop}All level $0$ rational $i$-symmetric flows can be given by the following construction. Let $r(t)$ be any rational function with the property $r(t)=r(\frac{1}{t})$. Then
\begin{eqnarray*}
\phi=\frac{x}{1-J(x,y)}\bl\frac{y}{1-J(x,y)},
\end{eqnarray*}
where $J(x,y)=yr(\frac{x}{y})$. 
\end{prop}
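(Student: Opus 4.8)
The plan is to combine the known shape of level $0$ rational flows with the symmetry criterion (\ref{symm}), and then collapse everything to a one-variable condition by exploiting $1$-homogeneity. First I would recall that a level $0$ rational flow has the form $\frac{x}{1-J(x,y)}\bl\frac{y}{1-J(x,y)}$ with $J$ a $1$-homogeneous rational function, and that its vector field is $xJ(x,y)\bl yJ(x,y)$; thus $\varpi=xJ$ and $\varrho=yJ$. Substituting these into the invariance condition (\ref{symm}) gives
\[
\frac{2x}{y}\cdot yJ(x,y)=xJ(x,y)+yJ(y,x),
\]
which simplifies immediately to the single functional equation $xJ(x,y)=yJ(y,x)$, exactly as already recorded in the paragraph preceding the statement.

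The second step is to pass from this homogeneous two-variable identity to an ordinary one-variable condition. Since $J$ is $1$-homogeneous and rational, I would set $r(t):=J(t,1)$, so that $J(x,y)=y\,r(\frac{x}{y})$ and, by the same token, $J(y,x)=x\,r(\frac{y}{x})$. Inserting these into $xJ(x,y)=yJ(y,x)$ yields
\[
xy\,r\Big(\frac{x}{y}\Big)=xy\,r\Big(\frac{y}{x}\Big),
\]
and after cancelling the common factor $xy$ (legitimate in the field of rational functions) and writing $t=\frac{x}{y}$, this is precisely $r(t)=r(\frac{1}{t})$. This single computation handles both directions at once: an $i$-symmetric level $0$ rational flow produces through $r=J(\,\cdot\,,1)$ a rational function fixed by $t\mapsto\frac{1}{t}$, and conversely any such $r$ defines $J(x,y)=y\,r(\frac{x}{y})$, a $1$-homogeneous rational function whose level $0$ flow satisfies (\ref{symm}) and hence is $i$-symmetric.

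I expect no genuine obstacle here; the argument is essentially a short calculation, and the main work is bookkeeping. The two points deserving a little care are verifying that the reduction of (\ref{symm}) really collapses to $xJ(x,y)=yJ(y,x)$ with no stray factors, and confirming that the correspondence $J\leftrightarrow r$ is a genuine bijection between $1$-homogeneous rational functions $J$ and rational functions $r$, so that the parametrization captures \emph{all} such flows rather than merely a subfamily. Both are immediate: the first from the cancellation $2xJ-xJ=xJ$, and the second from $1$-homogeneity, which lets one recover $J$ from its restriction $r=J(\,\cdot\,,1)$ and back.
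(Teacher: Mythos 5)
Your proposal is correct and follows essentially the same route as the paper: substitute the level $0$ vector field $xJ\bullet yJ$ into the criterion (\ref{symm}) to get $xJ(x,y)=yJ(y,x)$, then use $1$-homogeneity to rewrite this as $r(t)=r(\frac{1}{t})$ for $r(t)=J(t,1)$. The paper treats this as an immediate consequence (adding only the remark that $i\circ\phi\circ i=\phi$ can also be verified directly on the flow itself), and your bookkeeping of the two directions matches it.
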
 
We can verify directly that $i\circ\phi\circ i(\m{x})=\phi(\m{x})$.
\subsection{Rational flows of level $N\geq 1$} Now we can act exactly the same way as in \cite{alkauskas}, Subsection 5.3. However, this all turns out to be unnecessary, since $i$-symmetric rational flows can be obtain from $i_{0}$-symmetric flows as follows.\\

Suppose, rational flow $\phi$ is $i_{0}$-symmetric: $i_{0}\circ\phi\circ i_{0}=\phi$. Let us conjugate this identity with respect to some $1$-BIR $\ell$. We get
\begin{eqnarray}
\ell^{-1}\circ i_{0}\circ\phi\circ i_{0}\circ\ell=\ell^{-1}\circ\phi\circ\ell.
\label{focus}
\end{eqnarray}   
  
Now, suppose there exists a $1$-BIR $\ell$ such that
\begin{eqnarray}
i_{0}\circ\ell=\ell\circ i\Longrightarrow \ell^{-1}\circ i_{0}=i\circ\ell^{-1}.
\label{cond}
\end{eqnarray}
Thus, then (\ref{focus}) turns out to be 
\begin{eqnarray*}
i\circ\ell^{-1}\circ\phi\circ\ell\circ i=\ell^{-1}\circ\phi\circ\ell.
\end{eqnarray*} 
Thus, if $\phi$ is $i_{0}$-symmetric rational flow, $\ell^{-1}\circ\phi\circ\ell$ turns out to be $i$-symmetric. And vica versa - if $\phi$ is $i$-symmetric, then $\ell\circ\phi\circ\ell^{-1}$ is $i_{0}$-symmetric. Let $\ell(\m{x})=a(x,y)\bl b(x,y)$ be a $1$-BIR. The identity (\ref{cond}) is tantamount to 
\begin{eqnarray}
a\Big{(}\frac{y^2}{x},y\Big{)}=b(x,y).
\label{ab}
\end{eqnarray}
There are many solutions to this equation, but the crucial property we need is that $a\bl b$ is a $1$-BIR. We look for $a(x,y)\bl b(x,y)$ to be of the form (\ref{bir}), that is, $a\bl b=xA\bl yA$, where $A$ is $0$-homogeneous. Let $A(x,y)=r(\frac{x}{y})$. Then (\ref{ab}) is equivalent to $tr(t)=r(\frac{1}{t})$. We can choose $r(t)=\frac{1}{t+1}$. So, one of the $1$-BIR solutions to (\ref{cond}) is given by
\begin{eqnarray*}
\ell_{0}(\m{x})=\frac{xy}{x+y}\bl\frac{y^2}{x+y},\quad 
\ell_{0}^{-1}(\m{x})=\frac{x(x+y)}{y}\bl(x+y).
\end{eqnarray*}
Now, let $s$ by a $1$-BIR given by $xA\bl yA$, where $A(x,y)=A(y,x)$. Then we can verify that if $\phi$ is $i_{0}$-symmetric, then $s^{-1}\circ\ell_{0}^{-1}\circ\phi\circ\ell_{0}\circ s$ is $i$-symmetric. Indeed, any two $1$-BIR's of the form (\ref{bir}) commute (see \cite{alkauskas}, Proposition 15), so
\begin{eqnarray*}
\chi=s^{-1}\circ\ell_{0}^{-1}\circ\phi\circ\ell_{0}\circ s=
\ell_{0}^{-1}\circ s^{-1}\circ\phi\circ s\circ\ell_{0}.
\end{eqnarray*}
Since $\phi$ is $i_{0}$-symmetric, so is $s^{-1}\circ\phi\circ s$, and we have seen that the flow $\chi$ is then $i$-symmetric. Thus, we can directly formulate analogues of Proposition 13 and Proposition 14 in \cite{alkauskas}. For level $N\geq 2$, we need no additional calculations. Let us recall again the flows, given in \cite{alkauskas}, p. 321:
\begin{eqnarray}
\psi_{N}(\m{x})&=&\frac{(y+1)^{N}(x+y)+(x-y)}{(y+1)^{N}(x+y)-(x-y)}\cdot\frac{y}{y+1}\bl\frac{y}{y+1},\text{ and}\nonumber\\
\psi'_{N}(\m{x})&=&\frac{(y+1)^{N}(x-y)+(x+y)}{-(y+1)^{N}(x-y)+(x+y)}\cdot\frac{y}{y+1}\bl\frac{y}{y+1}.
\label{psi}
\end{eqnarray}
\begin{prop}
Let $N\geq 2$. Then the only $i$-symmetric level $N$ flows are given by
\begin{eqnarray*}
\phi(\m{x})=\ell^{-1}\circ\psi_{N}\circ\ell,\text{ or }\phi(\m{x})=\ell^{-1}\circ\psi'_{N}\circ\ell,
\end{eqnarray*}
where $\ell$ is given by $xA\bl yA$ for any symmetric, $0$-homogeneous $A$. 
\end{prop}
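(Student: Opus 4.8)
The plan is to obtain the $i$-symmetric classification as the image, under the conjugation bridge constructed just above, of the $i_0$-symmetric classification already proved in \cite{alkauskas} (Propositions 13 and 14), rather than repeating the reduction argument of \cite{alkauskas}, Subsection 5.3. The cornerstone is that conjugation by the $1$-BIR $\ell_0$ of (\ref{cond}) is a bijection between $i_0$-symmetric flows and $i$-symmetric flows: the assignment $\phi\mapsto\ell_0^{-1}\circ\phi\circ\ell_0$ sends $i_0$-symmetric flows to $i$-symmetric ones, and $\chi\mapsto\ell_0\circ\chi\circ\ell_0^{-1}$ is its inverse. Since conjugation by any $1$-BIR leaves the level $N$ unchanged, this bijection respects level, so it suffices to transport the level-$N$ representatives.

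For the existence half I would first check that $\psi_N$ and $\psi'_N$ are themselves $i$-symmetric. A short computation gives the vector field of $\psi_N$ as $\varpi\bl(-y^2)$ with $\varpi(x,y)+\varpi(y,x)=-2xy$, so the criterion (\ref{symm}), which here reads $\tfrac{2x}{y}(-y^2)=\varpi(x,y)+\varpi(y,x)$, holds at once; the case of $\psi'_N$ is identical. Conceptually these flows are just the $\ell_0$-images of the $i_0$-symmetric representatives of \cite{alkauskas}, which is why no genuinely new work is needed. I would then verify that conjugation by a symmetric $\ell=xA\bl yA$, $A(x,y)=A(y,x)$, preserves $i$-symmetry. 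The key is that such $\ell$ commutes with $i$: writing $i=j\circ i_0$ with $j$ of the form (\ref{bir}), one has $j\circ\ell=\ell\circ j$ because any two $1$-BIRs of the form (\ref{bir}) commute (\cite{alkauskas}, Proposition 15), while $i_0\circ\ell=\ell\circ i_0$ precisely because $A$ is symmetric; hence $i\circ\ell=\ell\circ i$ and therefore $i\circ(\ell^{-1}\circ\psi_N\circ\ell)\circ i=\ell^{-1}\circ(i\circ\psi_N\circ i)\circ\ell=\ell^{-1}\circ\psi_N\circ\ell$. This produces the whole stated family of $i$-symmetric level-$N$ flows.

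For the completeness half I would run the bridge backwards. Given an arbitrary $i$-symmetric level-$N$ flow $\chi$, the flow $\phi=\ell_0\circ\chi\circ\ell_0^{-1}$ is $i_0$-symmetric of the same level, so by \cite{alkauskas}, Propositions 13 and 14, it equals $s^{-1}\circ\Theta\circ s$ for some symmetric $1$-BIR $s$ and some $i_0$-symmetric representative $\Theta$, where $\ell_0^{-1}\circ\Theta\circ\ell_0$ is $\psi_N$ or $\psi'_N$. Conjugating back and using that $s$ commutes with $\ell_0$ (both of the form (\ref{bir})), $\chi=\ell_0^{-1}\circ\phi\circ\ell_0=s^{-1}\circ(\ell_0^{-1}\circ\Theta\circ\ell_0)\circ s$ is $s^{-1}\circ\psi_N\circ s$ or $s^{-1}\circ\psi'_N\circ s$, i.e. the claimed form with $\ell=s$. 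The delicate point, and the only place the symmetry of $A$ is genuinely forced, is exactly this last step: the $1$-BIR $\ell_0$ relating the two involutions is \emph{not} symmetric, so one might worry that transporting the classification drags in an extra non-symmetric conjugator. What rescues the argument is the commutativity of all $1$-BIRs of the form (\ref{bir}), which lets $\ell_0$ slide past $s$ and cancel against $\ell_0^{-1}$; the residual freedom is then precisely the symmetric freedom $s$ inherited from the $i_0$-classification — neither more nor less — and the two families $\psi_N,\psi'_N$ remain distinct for $N\geq2$ by the corresponding distinctness in \cite{alkauskas}.
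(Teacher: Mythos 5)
Your proof is correct and follows essentially the same route as the paper: both arguments transport the $i_{0}$-symmetric classification of \cite{alkauskas} (Propositions 13 and 14) through conjugation by the bridge $\ell_{0}$ of (\ref{cond}), using the commutativity of $1$-BIRs of the form (\ref{bir}) to slide $\ell_{0}$ past the symmetric conjugator $s$. The paper merely compresses this into the observation that $\Phi_{N}$ and $\Phi_{N}'$ from \cite{alkauskas} are exactly the $\ell_{0}^{-1}$-conjugates of $\psi_{N}$ and $\psi_{N}'$; your additional direct check of (\ref{symm}) for the vector field of $\psi_{N}$ is a harmless extra verification.
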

\begin{proof}Indeed, $\Phi_{N}$ and $\Phi'_{N}$, as given in \cite{alkauskas}, p. 322, are obtained from $\psi_{N}$ and $\psi'_{N}$ via conjugation with $xA\bl y A$, where $A(x,y)=\frac{x+y}{y}$. But this $1-$BIR is just equal to $\ell^{-1}$.
\end{proof}

For level $1$ flows, we have the following.
\begin{prop}There exist 4 basic $i$-symmetric level $1$ rational flows:
\begin{eqnarray*}
\phi_{1}(\m{x})&=&\frac{(2xy^2+x^2+2xy+y^2)x}{(xy+x+y)^2}
\bl\frac{(2xy^2+x^2+2xy+y^2)y}{(y^2+x+y)(xy+x+y)},\\
\psi_{1}(\m{x})&=&\frac{xy+y^2+2x}{(2+x+y)(y+1)}\bl\frac{y}{y+1},\\
\psi'_{1}(\m{x})&=&\frac{xy-y^2+2x}{(2-x+y)(y+1)}\bl\frac{y}{y+1},\\
\phi'_{1}(\m{x})&=&\frac{(2xy^2+x^2-2xy+y^2)x}{(xy-x+y)^2}
\bl\frac{(2xy^2+x^2-2xy+y^2)y}{(y^2+x-y)(-xy+x-y)}.
\end{eqnarray*}
All other $i$-symmetric level $1$ flows are obtained from these via conjugation with $1$-BIR $xA\bl yA$, where $A$ is $0$-homogeneous and symmetric. 
\end{prop}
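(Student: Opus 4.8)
The plan is to avoid any direct attack on the defining condition (\ref{symm}) for level $1$ and instead transport the already-known classification of $i_0$-symmetric level $1$ flows through the correspondence $\phi\mapsto\ell_0^{-1}\circ\phi\circ\ell_0$, exactly as was done for level $N\ge 2$ in the preceding Proposition. The engine is the conjugation relation (\ref{cond}): it shows that $\phi\mapsto\ell_0^{-1}\circ\phi\circ\ell_0$ carries $i_0$-symmetric rational flows bijectively onto $i$-symmetric ones. Moreover this map intertwines the two symmetric-conjugation actions, because any $s=xA\bl yA$ with $A$ symmetric commutes with $i_0$ (a one-line check using $A(x,y)=A(y,x)$) and, being of the form (\ref{bir}), commutes with $\ell_0$ as well (\cite{alkauskas}, Proposition~15), so that $\ell_0^{-1}\circ(s^{-1}\circ\phi\circ s)\circ\ell_0=s^{-1}\circ(\ell_0^{-1}\circ\phi\circ\ell_0)\circ s$, cf.\ (\ref{focus}). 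Hence a complete set of representatives of $i_0$-symmetric level $1$ flows modulo symmetric conjugation maps onto a complete set for the $i$-symmetric ones, preserving their number.

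First I would invoke the level $1$ case of the $i_0$-classification, the analogue of (\cite{alkauskas}, Proposition~14) alluded to just before the statement. In contrast with level $N\ge 2$, where the two families $\psi_N,\psi'_N$ exhaust everything, the level $1$ case yields \emph{four} basic $i_0$-symmetric flows: the two $N=1$ specializations of the general families, together with two flows peculiar to level $1$. Their existence reflects the degeneracy of the canonical flow $\phi_1=x\bl\frac{y}{y+1}$, whose first coordinate is the identity, so that the univariate reduction is no longer rigid and additional symmetric conjugacy classes appear. By the bijection of the previous paragraph, proving that there are exactly four basic $i$-symmetric level $1$ flows is equivalent to this count of four on the $i_0$-side.

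Next I would compute the four images under $\ell_0^{-1}\circ(\cdot)\circ\ell_0$ explicitly, using $\ell_0=\frac{xy}{x+y}\bl\frac{y^2}{x+y}$ and $\ell_0^{-1}=\frac{x(x+y)}{y}\bl(x+y)$. Two of them are essentially free: applied to the two general basic $i_0$-flows the correspondence returns precisely $\psi_1$ and $\psi'_1$, which are nothing but the $N=1$ members of (\ref{psi}) and are therefore already $i$-symmetric by the preceding Proposition (one may also check $i\circ\psi_1\circ i=\psi_1$ by a short direct substitution). The remaining two images, coming from the two exceptional level $1$ $i_0$-flows that are not in univariate form, are to be simplified to the stated expressions $\phi_1$ and $\phi'_1$; this rational-function bookkeeping — composing $\ell_0^{-1}$, the flow, and $\ell_0$, then clearing denominators — is the substantive computational step.

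Completeness then follows formally, since the four listed flows are the images of a complete set of symmetric-conjugacy representatives under a bijection intertwining the two actions: every $i$-symmetric level $1$ flow is a conjugate of exactly one of $\phi_1,\psi_1,\psi'_1,\phi'_1$ by some $s=xA\bl yA$ with $A$ symmetric and $0$-homogeneous. I expect the main obstacle to be twofold. First, one must confirm that the underlying level $1$ $i_0$-classification really has four basic flows and not fewer — equivalently, that the two exceptional flows are inequivalent to $\psi_1,\psi'_1$ under symmetric conjugation — which rests on (\cite{alkauskas}, Subsection~5.3) rather than on anything proved here. Second, unlike the case $N\ge 2$, where the $i$-symmetric representatives were simply inherited from the known families, the flows $\phi_1,\phi'_1$ must be produced by explicit non-univariate conjugation, so the genuine risk is algebraic error in that simplification.
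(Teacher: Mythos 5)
Your proposal is correct and follows essentially the same route as the paper: the paper gives no separate proof for the level $1$ case, relying precisely on the transport $\phi\mapsto\ell_{0}^{-1}\circ\phi\circ\ell_{0}$ of the four basic $i_{0}$-symmetric level $1$ flows from (\cite{alkauskas}, Subsection 5.3, Proposition 13), together with the observation that conjugation by $s=xA\bl yA$ with $A$ symmetric commutes with this transport. Your identification of the two genuine burdens — the external count of four on the $i_{0}$-side and the explicit rational computation producing $\phi_{1},\phi'_{1}$ — matches what the paper implicitly defers to the earlier reference.
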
  
The notation $\psi_{N}$ and $\psi'_{N}$ is the same as in (\ref{psi}). For $ N=1$ it has the form given in the above Proposition. Note that the first coordinate of $\psi_{1}$ is equal to $\mathcal{U}_{\frac{1}{2},1}$ ,the and first coordinate of $\psi'_{1}$ is equal to $\mathcal{U}_{\frac{1}{2},-1}$ (see \cite{alkauskas}, p. 286).\\

One more interesting fact is that $\psi_{1}$ and $\psi'_{1}$ are linearly conjugate with respect to an involution $(x,y)\mapsto (-x,y)$. So are $\phi_{1}$ and $\phi'_{1}$ with respect to an involution $(x,y)\mapsto (x,-y)$. 

\section{Solenoidal rational flows}
\label{solenoidal}
\subsection{Motivation}
Two dimensional vector field $\varpi\bl\varrho$ is called \emph{solenoidal} if
\begin{eqnarray*}
\varpi_{x}+\varrho_{y}=0.
\end{eqnarray*}
This is also knwon as \emph{incompressible} or \emph{divergence-free} vector field. Such a flow preserves areas. In this section we will find all solenoidal rational vector fields in dimension $2$.
\begin{Example} The flow $\phi_{\mathrm{sph},\infty}(\m{x})=x+(x-y)^2\bl y+(x-y)^2$, as given by (\ref{sph}), is solenoidal, since its vector field is $\varpi\bl\varrho=(x-y)^2\bl(x-y)^2$. Let $\mathcal{A}$ be any compact set in $\mathbb{R}^{2}$ with a smooth boundary and connected complement. The fact that the flows $\phi_{\mathrm{sph},\infty}$ preserves areas follows easily from the formula
\begin{eqnarray*}
|\mathcal{A}|=\int\limits_{-\infty}^{\infty}{A_t}\d t,
\end{eqnarray*} 
where $A_{t}$ is the length of the intersection of the line $x-y=t$ with $\mathcal{A}$. This length does not change under the flow $\phi_{\mathrm{sph},\infty}$.
\label{example2} 
\end{Example}
\begin{Example}
Let $\phi_{3}$, as already defined by Theorem \ref{mthm}, be the canonic flow of level $3$, as given by (\ref{can}): $\phi_{3}(x,y)=x(y+1)^{2}\bl\frac{y}{y+1}$. Its vector field is given by $2xy\bl(-y^2)$, and so is also solenoidal. We will check that the area of the unit circle, or any other compact domain bounded by a smooth curve, is preserved under this flow. First,
\begin{eqnarray*}
u^{z}(x,y)\bl v^{z}(x,y)=x(zy+1)^{2}\bl\frac{y}{zy+1}:=A\bl B.
\end{eqnarray*} 
If $x^2+y^2=1$, then $(A,B)$ lies on the algebraic genus $0$ curve $\mathscr{I}_{z}$, given by
\begin{eqnarray*}
A^2(1-zB)^4+\frac{B^2}{(1-zB)^2}=1.
\end{eqnarray*}
Figure \ref{sole} shows these curves for $z=\frac{i}{7}$, $i=0,\ldots,4$.  
\begin{figure}
\epsfig{file=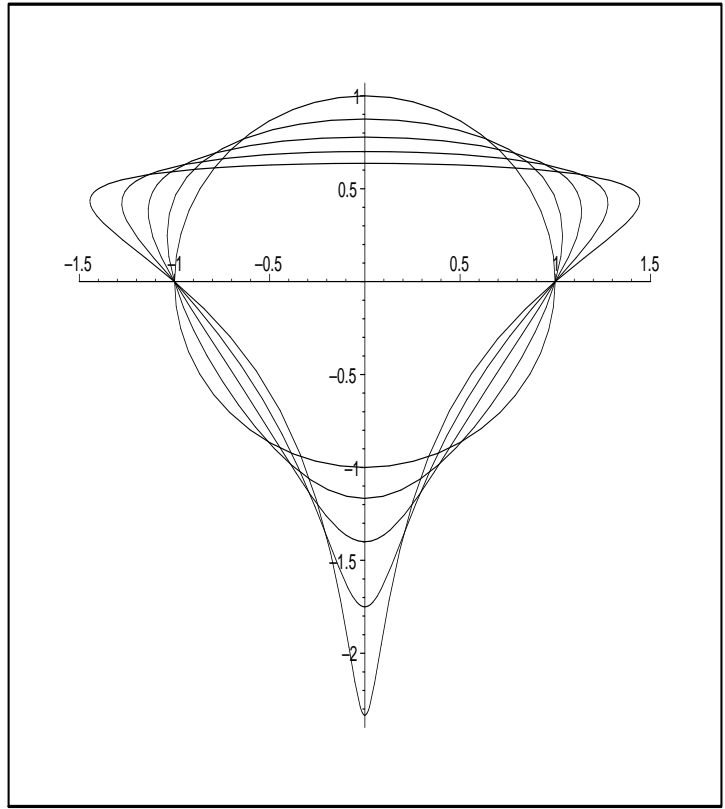,width=305pt,height=270pt,angle=-90}
\caption{The deformation of the unit circle under the solenoidal flow $\phi_{3}(\m{x})$}
\label{sole}
\end{figure}
Let $|z|<1$. We will show that the area inside $\mathscr{I}_{z}$ is always equal to $\pi$. In fact, let $\mathbf{X}(\theta)=(x(\theta),y(\theta))$, $\theta\in[0,2\pi]$, parametrizes any smooth curve $\mathscr{X}_{0}$, $\mathbf{X}(0)=\mathbf{X}(2\pi)$, $\mathbf{X}(\theta)\neq\mathbf{X}(\varphi)$ for $0\leq\theta<\varphi<2\pi$. Consider the curve $\mathscr{X}_{z}$, obtain from $\mathscr{X}_{0}$ under the flow $\phi_{3}$ after $z$ units of time. Thus, this curve is parametrized by
\begin{eqnarray*}
A(\theta)=x(\theta)\big{(}1+zy(\theta)\big{)}^2,\quad B(\theta)=\frac{y(\theta)}{1+zy(\theta)}.
\end{eqnarray*}
According to the Green's formula, the area inside $\mathscr{X}_{z}$ is equal to 
\begin{eqnarray*}
\oint\limits_{\mathscr{X}_{z}} A\d B=\int\limits_{0}^{2\pi} A(\theta)B'(\theta)\d \theta=\int\limits_{0}^{2\pi}x(\theta)y'(\theta)\d \theta=\oint\limits_{\mathscr{X}_{0}} x\d y.
\end{eqnarray*}
So, as expected, $\phi_{3}$ is an area-preserving flow.
\label{example3}
\end{Example}
As we will shortly see, these two examples is essentially the complete list (excluding the trivial flow $\phi_{\mathrm{id}}(x,y)=x\bl y$) of solenoidal rational flows.  
\subsection{All solenoidal flows}
Suppose, a level $N=0$ rational flow is solenoidal. Let its vector field is given by $xJ(x,y)\bl y J(x,y)$, where $J(x,y)$ is a non-zero rational $1$-homogeneous function. The property of solenoidality gives
\begin{eqnarray*}
J(x,y)+xJ_{x}(x,y)+J(x,y)+yJ_{y}(x,y)=3J(x,y)=0.
\end{eqnarray*} 
Thus, we get a contradiction, and there exist no level $0$ solenoidal flows.\\

 Assume $N\geq 1$. We will act similarly as we did in \cite{alkauskas}, Section 5.3. \\
 
 Let $\varpi(x,y)=Ux^2+Vxy+Wy^2=Q(x,y)-xy$, $\varrho(x,y)=-y^2$, $(V+1)^2-4UW=N^2$. Let this flow be $\phi$. If $Q=0$, this gives the vector field $(-xy)\bl(-y^2)$, this and all its conjugates are flows of level $0$, and we excluded this possibility. Therefore, assume $Q\neq 0$. Let $1$-BIR $\ell$ be given by $xA\bl yA$. The flow $\ell^{-1}\circ\phi\circ\ell$ has a vector field $\varpi'\bl\varrho'$, given by (\ref{vecconj}); that is,
\begin{eqnarray}
A(x,y)(Q(x,y)-xy)+yA_{y}(x,y)Q(x,y)&=&\varpi'(x,y),\nonumber\\
-A(x,y)y^2-yA_{x}(x,y)Q(x,y)&=&\varrho'(x,y).\label{nuo-cia}
\end{eqnarray}
Any vector field of a level $N$ rational flow can be obtained this way. We solve these equations for $A$ and $Q$ with the assumption that $\varpi'_{x}+\varrho'_{y}=0$. This gives 
\begin{eqnarray*}
A_{x}(Q-xy)+A(Q_{x}-y)+yA_{xy}Q+yA_{y}Q_{x}-A_{y}y^2-2Ay-A_{x}Q-yA_{xy}Q-yA_{x}Q_{y}=0.
\end{eqnarray*}
The mixed second derivatives vanish. Now, recall that $xQ_{x}+yQ_{y}=2Q$, and $xA_{x}+yA_{y}=0$. Thus, the above can be simplified as
\begin{eqnarray*}
A(Q_{x}-3y)-2A_{x}Q=0\Longrightarrow\frac{A_{x}}{A}=\frac{Q_{x}-3y}{2Q}.
\end{eqnarray*}
Now, $Q_{x}-3y=2Ux+(V-2)y$. We thus get the condition that
\begin{eqnarray}
A(x,1)=\exp\Bigg{(}\int\frac{2Ux+(V-2)}{2(Ux^2+(V+1)x+W)}\d x\Bigg{)}
\label{rat}
\end{eqnarray}
is a rational function.
\subsection{Level $N=1$} For level $N=1$, we know that the representation of level $1$ flows in an univariate form is not unique. In the simplest form, the first coordinate of the vector field is either $\tau x^{2}$, $\tau\in\mathbb{R}$, or $-2xy$. Suppose, it is the first case. So, in (\ref{rat}) we may suppose that $U=\tau$, and $V=W=0$. Then 
\begin{eqnarray*}
R(x)=\frac{2\tau x-2}{2\tau x^2+2x}=\frac{\tau x-1}{\tau x^2+x}=-\frac{1}{x}+\frac{2}{x+\frac{1}{\tau}}.
\end{eqnarray*} 
So,
\begin{eqnarray*}
\exp\Big{(}\int R(x)\d x\Big{)}=C\frac{(\tau x+1)^2}{x}\Longrightarrow A(x,y)=\frac{(\tau x+y)^2}{xy},
\end{eqnarray*}
and is always a rational function. Thus, we get a solenoidal flow of level $1$ by conjugating $\phi=\frac{x}{1-\tau x}\bl\frac{y}{y+1}$ with $1$-BIR $xA\bl y A$. We get a flow
\begin{eqnarray*}
x+(\tau x+y)^{2}\bl y-\tau(\tau x+y)^2.
\end{eqnarray*}
This flow is linearly conjugate to $\phi_{\mathrm{sph},\infty}$.
\\

Now, assume we have a vector field $-2xy\bl(-y^2)$, and so $U=W=0$, $V=-2$.
Then
\begin{eqnarray*}
\exp\Big{(}\int\frac{-4}{-2x}\d x\Big{)}=x^2.
\end{eqnarray*}
So, $A(x,y)=\frac{x^2}{y^2}$, and $\phi(\m{x})=\frac{x}{(y+1)^2}\bl\frac{y}{y+1}$. Conjugating $\phi$ with $xA\bl yA$, we get the solenoidal flow
\begin{eqnarray*}
x\bl x^{2}+y,\text{ with the vector field }0\bl x^2.
\end{eqnarray*}
This flow is also, however, linearly conjugate to $\phi_{\mathrm{sph},\infty}$. Thus we get the following claim.
\begin{prop}
The flow $\phi_{\mathrm{sph},\infty}=x+(x-y)^2\bl y+(x-y)^2$ as given by Example \ref{example2}, is the unique, up to linear conjugation, solenoidal rational projective flow of level $1$.
\end{prop}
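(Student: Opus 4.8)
The plan is to combine the reduction of a level~$1$ flow to univariate form with the vector-field transformation formula of Proposition~\ref{prop1}, and then to impose solenoidality as a constraint on the conjugating map. First I would recall that every rational flow of level~$1$ is $1$-BIR conjugate to one in univariate form $\mathcal{U}(x,y)\bl\frac{y}{y+1}$, with vector field $\varpi\bl(-y^2)$ and $\varpi$ a quadratic form; crucially, for level~$1$ this representative is not unique, and the first coordinate of the vector field can be normalised to be either $\tau x^2$ (with $\tau\in\mathbb{R}$) or $-2xy$. Since solenoidality is \emph{not} preserved under $1$-BIR conjugation, the logic must run forward: I would write a general level~$1$ flow as $\ell^{-1}\circ\psi\circ\ell$ with $\psi$ in one of the two normal forms and $\ell=xA\bl yA$ for a $0$-homogeneous rational $A$, use (\ref{nuo-cia}) with $\varpi=Q-xy$, $\varrho=-y^2$ to write down the transformed field $\varpi'\bl\varrho'$, and treat $A$ as the unknown.

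Next I would impose $\varpi'_x+\varrho'_y=0$. Differentiating (\ref{nuo-cia}) and simplifying — the mixed derivatives $A_{xy}$ cancel, while Euler's identity $xA_x+yA_y=0$ together with the homogeneity relation $xQ_x+yQ_y=2Q$ kills the remaining $A_y$ terms — collapses the condition to the separable equation $\frac{A_x}{A}=\frac{Q_x-3y}{2Q}$. Dehomogenising at $y=1$ gives (\ref{rat}), so $A$ is pinned down uniquely, up to a multiplicative constant, by the chosen normal form. The step I expect to be the main obstacle is precisely here: for $\ell$ to be a genuine $1$-BIR the solution $A$ must be \emph{rational}, which is an integrality condition on the residues of the integrand and is a priori not guaranteed. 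Establishing that this condition is actually met — so that a solenoidal rational level~$1$ flow exists at all — is the heart of the argument.

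Then I would run the two cases explicitly. For $(U,V,W)=(\tau,0,0)$ the integrand decomposes as $-\frac{1}{x}+\frac{2}{x+\frac{1}{\tau}}$, whose exponential is rational, yielding $A(x,y)=\frac{(\tau x+y)^2}{xy}$; conjugating $\frac{x}{1-\tau x}\bl\frac{y}{y+1}$ by $xA\bl yA$ produces $x+(\tau x+y)^2\bl y-\tau(\tau x+y)^2$. For $(U,V,W)=(0,-2,0)$ the equation gives $A(x,y)=\frac{x^2}{y^2}$ and the flow $x\bl x^2+y$. In both cases $A$ turns out rational, so the residue condition is met and, up to the harmless scaling freedom in $A$, the solenoidal flow is unique within each normal form.

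Finally I would check that each resulting flow is linearly conjugate to $\phi_{\mathrm{sph},\infty}$: taking $\tau=-1$ in the first family returns $x+(x-y)^2\bl y+(x-y)^2$ verbatim, and an explicit linear change of variables handles the remaining values of $\tau$ as well as the flow $x\bl x^2+y$ of the second family. Since every level~$1$ rational flow reduces to one of the two normal forms and solenoidality determines $A$ up to a constant in each, this yields both existence and uniqueness, proving that $\phi_{\mathrm{sph},\infty}$ is, up to linear conjugation, the only solenoidal rational projective flow of level~$1$.
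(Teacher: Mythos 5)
Your proposal is correct and follows essentially the same route as the paper: reduce a level-$1$ flow to one of the two univariate normal forms ($\tau x^2$ or $-2xy$ as first coordinate of the vector field), impose $\varpi'_x+\varrho'_y=0$ on the $xA\bl yA$-conjugate to get $\frac{A_x}{A}=\frac{Q_x-3y}{2Q}$, verify that $A$ is rational in both cases (giving $\frac{(\tau x+y)^2}{xy}$ and $\frac{x^2}{y^2}$), and check that the resulting flows $x+(\tau x+y)^2\bl y-\tau(\tau x+y)^2$ and $x\bl x^2+y$ are linearly conjugate to $\phi_{\mathrm{sph},\infty}$. No gaps; this matches the paper's argument step for step.
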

\subsection{Level $N\geq 2$}Suppose now, $U=0$. Then
\begin{eqnarray*}
R(x)=\frac{2Ux+(V-2)}{2(Ux^2+(V+1)x+W)}=\frac{V-2}{2(V+1)x+2W}.
\end{eqnarray*}
So,
\begin{eqnarray}
\exp\Big{(}\int R(x)\d x\Big{)}=\Big{(}x+\frac{W}{V+1}\Big{)}^{\frac{V-2}{2(V+1)}}.
\label{a-show}
\end{eqnarray}
This is rational if $\frac{V-2}{2(V+1)}\in\mathbb{Z}$. But the property $(V+1)^2-4UW=(V+1)^2=N^2$ gives $V=\pm N-1$, and so
\begin{eqnarray}
\frac{N-3}{2N}\in\mathbb{Z},\text{ or }\frac{N+3}{2N}\in\mathbb{Z}.
\label{nn}
\end{eqnarray}
Since $N$ is positive, this gives $N=1$ (the case investigated in the previous subsection), or $N=3$. Suppose $N=3$, $V=2$. Then $R(x)=0$, $A(x,y)=1$, and we get the flow with the vector field $2xy+Wy^2\bl y^{2}$, exactly linearly conjugate to the flow in Example \ref{example3}, the canonical rational flow of level $3$. Suppose now $V=-4$. We have a flow $\phi$ with vector field $-4xy+Wy^2\bl (-y^2)$ of rational flow of level $3$. Next, (\ref{a-show}) shows that
\begin{eqnarray*}
A(x,y)=\frac{x}{y}-\frac{W}{3}.
\end{eqnarray*}
Conjugating $\phi$ with $1$-BIR of the form $xA\bl yA$, we according to Proposition \ref{prop1}, get a vector field
\begin{eqnarray*}
-x^2+\frac{4}{3}Wxy-\frac{1}{3}W^2y^2\bl2xy-\frac{2}{3}Wy^2.
\end{eqnarray*}
This is a solenoidal vector field, linearly conjugate to $2xy\bl(-y^2)$. Indeed, conjugating this with  a linear  map $(x,y)\mapsto (x,3y/W)$, we get a vector field
\begin{eqnarray*}
-x^2+4xy-3y^2\bl 2xy-2y^2=P\bl Q.
\end{eqnarray*}
Now, $yP-xQ=-3y(x-y)^2$. We will now use the formulas from \cite{alkauskas}, p. 308, which allow to perform a linear change and transform a pair of quadratic forms into the simplest form. Thus, let us consider the linear change $(x,y)\mapsto (x,x+y)$. This gives the vector field $-x^2\bl 2xy$, and after another conjugation with $(x,y)\mapsto (y,x)$, this again yields the flow $\phi_{3}$.\\

Now, assume $U\neq 0$. The function on the right of (\ref{rat}) is rational if and only if
\begin{eqnarray*}
R(x)=\frac{2Ux+(V-2)}{2(Ux^2+(V+1)x+W)}=\frac{a}{x-\xi_{1}}+\frac{b}{x-\xi_{2}},\quad a,b,\in\mathbb{Z};
\end{eqnarray*} 
here $\xi_{1}\xi_{2}=\frac{W}{U}$, $\xi_{1}+\xi_{2}=-\frac{V+1}{U}$. This gives
\begin{eqnarray*}
2Ua(x-\xi_{2})+2Ub(x-\xi_{1})=2Ux+(V-2).
\end{eqnarray*}
This, on its turn, gives two linear equations for $a$ and $b$. Solving we obtain
\begin{eqnarray*}
a=\frac{2U\xi_{1}-2+V}{2U(\xi_{1}-\xi_{2})},\quad b=\frac{2-V-2U\xi_{2}}{2U(\xi_{1}-\xi_{2})}.
\end{eqnarray*}
Since $a+b=1$, this gives the single condition
\begin{eqnarray*}
\frac{2U\xi_{1}-2+V}{2U(\xi_{1}-\xi_{2})}\in\mathbb{Z}.
\end{eqnarray*}
But we have
\begin{eqnarray*}
a=\frac{2U\xi_{1}-2+V}{2U(\xi_{1}-\xi_{2})}=\frac{2U\xi_{1}-3-U(\xi_{1}+\xi_{2})}{2U(\xi_{1}-\xi_{2})}=\frac{1}{2}-\frac{3}{2U(\xi_{1}-\xi_{2})}=M\in\mathbb{Z}.
\end{eqnarray*}
Further,
\begin{eqnarray*}
U(\xi_{1}-\xi_{2})=\sqrt{(V+1)^2-4UW}=\pm N.
\end{eqnarray*}
So,
\begin{eqnarray*}
\frac{1}{2}\pm\frac{3}{2N}\in\mathbb{Z}.
\end{eqnarray*}
Note that this is completely identical to (\ref{nn}). Since $N\geq 2$, this again gives $N=3$, $(a,b)=(1,0)$ or $(0,1)$. In both cases, we have
\begin{eqnarray*}
R(x)=\frac{1}{x+\frac{V+1+N}{2U}}=\frac{1}{x+\frac{V+4}{2U}}.
\end{eqnarray*}
Consider, thus, a vector field $Ux^2+Vxy+Wy^2\bl(-y^2)$, and let us again calculate $\varpi'\bl\varrho'$ for $A=\frac{x}{y}+\frac{V+4}{2U}$. We verify once again that $\varpi'\bl\varrho'$ is a pair of quadratic forms, linearly conjugate to $2xy\bl (-y^2)$.  Thus, we have proved
\begin{prop}For level $N\geq 2$, solenoidal rational flows exist only for level $N=3$. For level $N=3$, the flow $\phi_{3}(\m{x})=x(y+1)^2\bl\frac{y}{y+1}$ as given by Example \ref{example3}, is the unique, up to linear conjugation, solenoidal rational projective flow of level $3$.
\end{prop}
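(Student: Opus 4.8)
The plan is to reduce the classification to an explicit arithmetic constraint on the level $N$, obtained by asking when the $1$-BIR conjugating function can be chosen rational. Since solenoidality is not preserved under $1$-BIR conjugation, I would not impose it on a normal form; instead I would start from a vector field already in the convenient shape $\varpi\bl\varrho=(Q(x,y)-xy)\bl(-y^2)$, with $Q=Ux^2+Vxy+Wy^2$ and $(V+1)^2-4UW=N^2$, and ask which $1$-BIR's $\ell=xA\bl yA$ turn this into a solenoidal field $\varpi'\bl\varrho'$. Every level-$N$ rational vector field arises this way, so this captures all cases, and the level-$0$ possibility $Q\equiv 0$ is excluded at the outset.

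The second step is to write the solenoidality condition $\varpi'_{x}+\varrho'_{y}=0$ using the conjugation formula (\ref{vecconj}) of Proposition \ref{prop1}. After differentiating and cancelling the mixed second derivatives of $A$, I would invoke the two Euler identities $xQ_{x}+yQ_{y}=2Q$ (for the quadratic form $Q$) and $xA_{x}+yA_{y}=0$ (for the $0$-homogeneous $A$) to collapse the expression to the separable relation $\frac{A_{x}}{A}=\frac{Q_{x}-3y}{2Q}$. Setting $A(x,1)=\exp\big(\int R(x)\d x\big)$ with $R(x)=\frac{2Ux+(V-2)}{2(Ux^2+(V+1)x+W)}$ then reduces the entire problem to a single question: for which $N$ is this exponential of an integral of a rational function itself rational?

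The heart of the argument, and the step I expect to be the main obstacle, is extracting an integrality condition on $N$ from this rationality requirement while ruling out degenerate configurations. I would split into $U=0$ and $U\neq0$. When $U=0$ the antiderivative is a single power $(x+\tfrac{W}{V+1})^{(V-2)/(2(V+1))}$, rational exactly when the exponent is an integer; combining this with $(V+1)^2=N^2$, i.e. $V=\pm N-1$, forces $\frac{N\mp3}{2N}\in\mathbb{Z}$. When $U\neq0$, I would decompose $R$ into partial fractions $\frac{a}{x-\xi_{1}}+\frac{b}{x-\xi_{2}}$, which is the exponential's logarithmic derivative and is rational iff $a,b\in\mathbb{Z}$; computing the residue $a=\tfrac12-\tfrac{3}{2U(\xi_{1}-\xi_{2})}$ and using $U(\xi_{1}-\xi_{2})=\pm\sqrt{(V+1)^2-4UW}=\pm N$ yields the very same condition $\tfrac12\pm\tfrac{3}{2N}\in\mathbb{Z}$. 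The delicate point in both branches is that rationality of $A$ is equivalent to \emph{integrality} of the relevant exponents or residues, so one must be sure no cancellation, repeated root of the denominator, or the already-excluded $Q\equiv0$ slips past. Solving the integrality condition for $N\geq2$ then leaves only $N=3$, which establishes the first assertion.

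Finally, for $N=3$ I would prove uniqueness up to linear conjugation. In each admissible sub-case ($U=0$ with $V=2$ or $V=-4$, and $U\neq0$ with residue pair $(1,0)$ or $(0,1)$) the explicit $A$ is rational, and computing $\varpi'\bl\varrho'$ via (\ref{vecconj}) always produces a \emph{pair of quadratic forms}. I would then apply the standard reduction of a pair of quadratic forms to simplest form (\cite{alkauskas}, p.~308): since $yP-xQ$ turns out to be a scalar multiple of $y(x-y)^2$, a short sequence of linear changes brings the field to $2xy\bl(-y^2)$, the vector field of the canonical level-$3$ flow $\phi_{3}$ of Example \ref{example3}. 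This identifies every $N=3$ solenoidal rational flow with $\phi_{3}$ up to linear conjugation, completing the proof.
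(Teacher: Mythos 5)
Your proposal is correct and follows essentially the same route as the paper: the same normal form $\varpi\bl\varrho=(Q-xy)\bl(-y^2)$ with $(V+1)^2-4UW=N^2$, the same reduction of solenoidality to $\frac{A_x}{A}=\frac{Q_x-3y}{2Q}$ via the Euler identities, the same $U=0$ versus $U\neq0$ case split leading to $\frac{1}{2}\pm\frac{3}{2N}\in\mathbb{Z}$, hence $N=3$, and the same identification of each admissible subcase with $2xy\bl(-y^2)$ by linear reduction of the resulting pair of quadratic forms. No substantive differences to report.
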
 

\subsection{Higher dimensional rational solenoidal flows}The flow 
$\phi_{\mathrm{sph},\infty}(\m{x})$ can be generalized to any dimension. In fact, this generalization was given in our first paper on the projective translation equation \cite{alkauskas-0}.\\

Let $N\in\mathbb{N}$, $N\geq 2$. Let us choose a non-zero linear form $L(\m{x})=\sum_{i=1}^{N}a_{i}x_{i}$ in $N$ variables, and let $\m{c}=(c_{1},\ldots,c_{N})\in\mathbb{R}^{N}$ be such that $\m{c}\neq\m{0}$, $L(\m{c})=0$. Then
\begin{eqnarray*}
\phi_{\m{c},L}(\m{x})=\m{c}L^{2}(\m{x})+\m{x}
\end{eqnarray*}  
is a rational projective flow with a vector field $\varpi_{1}\bl\cdots\bl\varpi_{N}=\m{c}L^{2}(\m{x})=c_{1}L^{2}(\m{x})\bl\cdots\bl c_{N}L^{2}(\m{x})$. It is solenoidal, since
\begin{eqnarray*}
\sum\limits_{i=1}^{N}\frac{\p}{\p x_{i}}\varpi_{i}=2L(\m{x})\sum\limits_{i=1}^{N}a_{i}c_{i}=0.
\end{eqnarray*}
This facts nicely complements the theory of superflows, since all discovered real superflows in dimension $N\geq 3$ turn out to be solenoidal \cite{alkauskas-super2}.\\

Further, let $n,m\in\mathbb{Z}$, and consider rational flow in dimension $N=3$, given in \cite{alkauskas}, p. 324:
\begin{eqnarray*}
\psi_{n,m}(x,y,w)=x(w+1)^{n-1}\bl y(w+1)^{m-1}\bl\frac{w}{w+1}.
\end{eqnarray*} 
Its vector field is equal to $(n-1)xw\bl(m-1)yw\bl(-w^2)$, and so is solenoidal if and only if $n+m=4$.\\

 In particular, the rational flow  
$\psi(\m{x})=x(w+1)\bl y(w+1)\bl\frac{w}{w+1}$ preserves volumes. Consider the unit sphere $x^2+y^2+w^2=1$. After $z$ units of time, the point $(x,y,w)$ under the flow $\psi$ is carried into the point $x(zw+1)\bl y(zw+1)\bl\frac{w}{zw+1}$, and so the unit sphere - into the genus $0$ surface
\begin{eqnarray*}
A^2(1-zC)^2+B^2(1-zC)^2+\frac{C^2}{(1-zC)^2}=1.
\end{eqnarray*}
Similarly as in Example \ref{example3}, let 
\begin{eqnarray*}
(x^{0},y^{0},z^{0})=\big{(}x(\theta,\varphi),y(\theta,\varphi),w(\theta,\varphi)\big{)}, 
\quad (\theta,\varphi)\in\Delta,
\end{eqnarray*}
is a parametrization of a smooth, compact, oriented, closed and not self-intersecting surface $\mathcal{S}_{0}$. After $z$ units of time, this surface is carried onto a surface, parametrized by $(x^{z},y^{z},w^{z})=\big{(}x(1+zw),y(1+zw),\frac{w}{1+zw}\big{)}$. According to Gauss' formula, the volume inside this surface is equal to
\begin{eqnarray*}
V_{z}=\iint\limits_{\mathcal{S}_{z}}x^{z}\d y^{z}\d w^{z}.
\end{eqnarray*}
But
\begin{eqnarray*}
x_{z}\frac{D(y^{z},w^{z})}{D(\theta,\varphi)}=x(1+zw)\begin{vmatrix}
y_{\theta}(1+zw)+zyw_{\theta}   &\frac{w_{\theta}}{(1+zw)^2}\\
y_{\varphi}(1+zw)+zyw_{\varphi} &\frac{w_{\varphi}}{(1+zw)^2}\end{vmatrix}=
x\begin{vmatrix}
y_{\theta} &w_{\theta}\\
y_{\varphi}&w_{\varphi}
\end{vmatrix}.
\end{eqnarray*} 
So, as expected, $V_{z}=V_{0}$.\\
 
Note that flows $\psi_{n,m}$ and $\psi_{m,n}$ are linearly conjugate.\\

One more solenoidal flow is obtained from Theorem \ref{thm2}. Let now $x,y,z$ be three space variables, and let $\phi=\frac{x}{x+1}\bl\frac{y}{y+1}$, $\mathscr{W}=zx^Ay^{B}$, $A,B\in\mathbb{Z}$. This gives a rational $3$-dimensional flow
\begin{eqnarray*}
\Phi_{A,B}=\frac{x}{x+1}\bl\frac{y}{y+1}\bl z(x+1)^{A}(y+1)^B.
\end{eqnarray*}
Consider the involution $\ell(\m{x})=x\bl y\bl \frac{x^2}{z}$. Then, by a direct calculation, $\ell^{-1}\circ\Phi_{A,B}\circ\ell=\Phi_{2-A,-B}$. Thus, flows with pairs $(A,B)$ and $(-2-A,-B)$ are $1$-BIR conjugate, as well as flows with pairs $(A,B)$ and $(-A,-2-B)$. Consider an involution $\ell(\m{x})=x\bl y\bl\frac{xy}{z}$. Tis gives $1$-BIR conjugate pairs $(A,B)$ and $(-1-A,-1-B)$. Combining $(-2-A,-B)$ and $(-1-A,-1-B)$, gives $(A+1,B-1)$. The iteration of this gives a pair $(A+B,0)$.\\

A flow $\Phi_{A,B}$ is solenoidal only if $A=B=2$, and thus we have another example
\begin{eqnarray*}
\frac{x}{x+1}\bl\frac{y}{y+1}\bl z(x+1)^2(y+1)^2.
\end{eqnarray*}
  Continuing the list of open problems related to the projective translation equation begun in \cite{alkauskas-un} and continued in \cite{alkauskas-comm, alkauskas-super1, alkauskas-super2}, we pose one more problem. 
\begin{prob14}
Find all $3$-dimensional rational projective and solenoidal flows.
\end{prob14}
Among the flows $\psi_{n,4-n}$ there are some which are $\ell$-conjugate, but not linearly conjugate. For example, as is clear from (\cite{alkauskas}, Subsection 5.6), rational flows $\psi_{3,1}$ and $\psi_{5,-1}$ are both $1$-BIR conjugate to $\psi_{1,0}$, but they are not linearly conjugate. This corresponds to the fact that Cremona group of $P(\mathbb{C}^{2})$ contains not only M\"{o}bius transformations, but also quadratic maps. \\

Also, note that in the light or results of the current paper, the task to classify all rational projective flows in dimension $3$ seems much more optimistic than it seemed before. We do not need the algorithm of reduction of vector fields; some analogue of Theorem \ref{thm1} should hold, and we can obtain the existence of $1$-BIR, which carries rational flow into a canonic form,  by some ``bootstrapping" argument. 

\section{Rational flows which share orbits}
\label{orbits-share} 
 In (\cite{alkauskas-ab}, Subection 4.3) we have seen that two algebraic flows
\begin{eqnarray*}
\phi^{(2)}_{3}(x,y)&=&\frac{(y^2+x)^3}{(x+2xy+y^3)^2}\bl\frac{y(y^2+x)}{(x+2xy+y^3)},\\
\tilde{\varpi}\bl\tilde{\varrho}&=&(-4xy+3y^2)\bl(-2xy^2+y^3)x^{-1},
\end{eqnarray*}
and
\begin{eqnarray*}
\Phi(\m{x})&=&\frac{y\sqrt{4x+(y-1)^2}+y^2+2x-y}{8x+2(y-1)^2}\bl \frac{y}{\sqrt{4x+(y-1)^2}},\\ 
\varpi\bl\varrho&=&(-4x^2+3xy)\bl(-2xy+y^2),
\end{eqnarray*}
share the same orbits $\mathscr{W}=\frac{y^4}{x(x-y)}=\mathrm{const}.$, since
\begin{eqnarray*}
\frac{\varpi(x,y)}{\varrho(x,y)}=\frac{\tilde{\varpi}(x,y)}{\tilde{\varrho}(x,y)}.
\end{eqnarray*}
In this Section we find when this happens for rational flows. Because of the definition of the level, only flows of the same level can share orbits.\\

It is obvious, that if $\ell$ is a $1$-BIR, and rational flows $\phi$ and $\psi$ share the orbits, so do $\ell^{-1}\circ\phi\circ\ell$ and $\ell^{-1}\circ\psi\circ\ell$. This is clear from Proposition \ref{prop1} - vector field $\varpi\bl\varrho$ and $\alpha\bl\beta$ are proportional (by some $0$-homogeneous function, which in the above case is $\frac{x}{y}$), so are $\varpi'\bl\varrho'$ and $\alpha'\bl\beta'$. And, of course, linear conjugation preserves the property of coinciding orbits.  

\subsection{Level $0$} Since for any rational $1$-homogeneous function $J(x,y)$ the flow of level $0$ $\phi=\frac{x}{1-J(x,y)}\bl\frac{y}{1-J(x,y)}$ has orbits $\frac{x}{y}=\mathrm{const.}$, all such flows share the same orbits.  
\subsection{Level $1$}Let two flows $\phi$ and $\psi$ of level $1$ share orbits. This level is exceptional because there exists a $1$-BIR such that a flow $\ell^{-1}\circ\phi\circ\ell$ has a vector field with the second coordinate vanishing, so the orbits are given by $y=\mathrm{const}$. So, the orbits of  $\ell^{-1}\circ\psi\circ\ell$ are given by the same equation, and so its second coordinate of the vector field also vanish. Thus, we need to find all  flows of level $1$ with a vector field of the form $\varpi\bl 0$.\\

This was done in Subsection 4.2 in \cite{alkauskas}, only one tiny mistake is left in the formulation of Proposition 10. Since we (see \cite{alkauskas}, p. 307) use conjugation with a linear map $L(x,y)=(x+py,y)$, the correct formulation should read as follows:
\begin{prop}Assume that $\phi(\m{x})$ is a rational real flow, and that $\varrho(x,y)\equiv 0$. Then $\varpi(x,y)=(ax+by)^2$ for certain $a,b\in\mathbb{R}$.
\end{prop}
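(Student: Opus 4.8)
The plan is to exploit the fact that $\varrho\equiv 0$ collapses the second coordinate of the flow, reducing everything to a one-parameter family of rational maps in the first coordinate. First I would pass to the genuine one-parameter flow $\phi^{z}(\m{x})=z^{-1}\phi(z\m{x})=(X(z),Y(z))$, which satisfies the translation equation $\phi^{w}\circ\phi^{z}=\phi^{z+w}$ and whose infinitesimal generator at $z=0$ is exactly $\varpi\bl\varrho$, as recorded in the expansions in the proof of Proposition \ref{prop1}. Since $\dot{Y}=\varrho(X,Y)\equiv 0$ and $Y(0)=y$, the second coordinate is constant along the flow, so $z^{-1}v(xz,yz)=y$ for all small $z$; setting $z=1$ gives the identity $v(x,y)=y$. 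Thus the whole problem reduces to the first coordinate alone.

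With $Y\equiv y$ now a parameter, the first coordinate solves the autonomous ODE $\dot{X}=\varpi(X,y)$ with $X(0)=x$, and crucially $X(z)=z^{-1}u(xz,yz)$ is a \emph{rational} function of $z$ (a rational function evaluated along the line $(xz,yz)$, divided by $z$). Separating variables gives $\int \frac{\d X}{\varpi(X,y)}=z+C(x,y)$, so rationality in $z$ forces the antiderivative $R(X):=\int \frac{\d X}{\varpi(X,y)}$ to be a rational function of $X$ admitting a rational inverse. I would argue this in two steps. If the partial-fraction expansion of $1/\varpi(X,y)$ contained a simple-pole term with nonzero residue, then $R$ would carry a logarithm and its inverse $X(z)=R^{-1}(z+C)$ would be transcendental in $z$, a contradiction; hence $1/\varpi$ has a rational antiderivative $R$. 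Then, since $X(z)=R^{-1}(z+C)$ must be single-valued and rational, $R$ must have degree $1$, i.e. $R(X)=\frac{\alpha X+\beta}{\gamma X+\delta}$ with $\alpha\delta-\beta\gamma\neq 0$.

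Differentiating, $\frac{1}{\varpi(X,y)}=R'(X)=\frac{\alpha\delta-\beta\gamma}{(\gamma X+\delta)^{2}}$, whence $\varpi(X,y)=U\,(X-ry)^{2}$ for suitable $U,r$ (the coefficients $\alpha,\beta,\gamma,\delta$ may depend on $y$, but $2$-homogeneity of $\varpi$ pins the dependence down to the displayed shape and makes $U,r$ constants). This is exactly the asserted perfect square $\varpi(x,y)=(ax+by)^{2}$ with $a=\sqrt{U}$, $b=-r\sqrt{U}$. Finally I would record the role of the linear map $L(x,y)=(x+py,y)$ that motivates the correction: by Proposition \ref{prop1} conjugation by $L$ sends $\varpi(x,y)$ to $\varpi(x+py,y)$ while preserving $\varrho\equiv 0$, and $(ax+by)^{2}\mapsto (ax+(ap+b)y)^{2}$. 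This shift of $b$ is precisely the freedom that a pure-square formulation such as $a^{2}x^{2}$ suppresses, so the invariant statement must allow the general square $(ax+by)^{2}$.

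The main obstacle is the middle step: ruling out every rational $\varpi$ other than a perfect square. The delicate points are (i) excluding logarithmic terms in $R$, equivalently showing that a genuine pole of $1/\varpi$ forces transcendence of the $z$-dependence, and (ii) excluding higher-degree rational $R$, where one needs that a rational map of degree $\ge 2$ has no rational inverse, so that $R^{-1}(z+C)$ cannot be rational in $z$. Both become transparent once phrased as the statement that $z\mapsto\big(x\mapsto X(z)\big)$ is a one-parameter group of birational self-maps of $\mathbb{P}^{1}$, i.e. a $\mathbb{G}_{a}$-action, which must be unipotent (parabolic) and hence generated by a vector field with a single double zero, again the perfect square.
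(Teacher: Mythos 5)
Your argument is correct in substance but takes a genuinely different route from the one the paper relies on. The paper does not reprove this Proposition here: it points back to Subsection 4.2 of the first paper, where the mechanism is that $v=y$ makes $\phi$ preserve the pencil of horizontal lines, birationality of $\phi$ then forces $u$ to be a Jonqui\`eres transformation $\frac{a(y)x+b(y)}{c(y)x+d(y)}$, the vector field of such a rationally parametrized family of M\"obius maps is a quadratic form in $x$, and the whole point of the present correction is only that the normalizing conjugation $L(x,y)=(x+py,y)$ turns the square $a^2x^2$ into the general square $(ax+by)^2$. You instead integrate the one-dimensional ODE $\dot X=\varpi(X,y)$ line by line and use only the rationality of $z\mapsto z^{-1}u(xz,yz)$: from $R(X(z))=z+R(x)$ with $R=\int\d X/\varpi(X,y)$ and multiplicativity of degrees under composition of rational self-maps of $\mathbb{P}^1$ you get $\deg R=1$, hence $1/\varpi=R'$ has a single double pole and $\varpi$ is the square of a linear form; homogeneity then fixes the $y$-dependence. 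This is more elementary and self-contained (no appeal to the Cremona/Jonqui\`eres structure), and your closing reformulation --- a one-parameter group on $\mathbb{P}^1$ that is rational in the time parameter is unipotent, so its generator has one double zero --- is exactly the right invariant way to see it.

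Two caveats. First, your step (i) (exclusion of logarithmic terms) is asserted rather than proved; the honest argument is a residue computation: if $R=\sum_jc_j\log(X-\xi_j)+(\text{rational})$ and $R(X(z))=z+C$ with $X(z)$ rational and nonconstant, then comparing residues of the $z$-derivative of both sides at a pole of $X(z)$ and at a preimage of each $\xi_j$ forces every $c_j=0$. Second, your computation actually yields $\varpi=U(x-ry)^2$ with $U\in\mathbb{R}$ of either sign, and writing $a=\sqrt{U}$ silently discards the case $U<0$: the rational flow $\frac{x}{x+1}\bl y$ has $\varpi=-x^2$, which is not of the form $(ax+by)^2$ over $\mathbb{R}$. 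This is as much a defect of the literal statement as of your proof --- the surrounding text absorbs the sign into ``conjugation with a homothety'', i.e.\ replacing $\phi$ by $z^{-1}\phi(z\m{x})$ with $z=-1$ --- but your write-up should either carry the factor $U$ or invoke that normalization explicitly.
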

Thus, over reals, up to conjugation with a homothety, these are the vector fields: $\varpi_{a}(x,y)=(x+ay)^2\bl 0$, and $y^2\bl 0$. And so, these are the flows:
\begin{eqnarray*}
\phi_{a}(\m{x})=\frac{x+ayx+a^2y^2}{1-x-ay}\bl y,\quad a\in\mathbb{R},\text{ and }\psi_{0}(\m{x})=x+y^2\bl y.
\end{eqnarray*} 
Thus, we have proved
\begin{prop}Suppose, rational flows $\phi$ and $\psi$ of level $1$ share orbits. Then there exists a $1$-BIR $\ell$ such that two flows $\ell^{-1}\circ\phi\circ\ell$ and $\ell^{-1}\circ\psi\circ\ell$, belong, up to conjugation with a homothety, to the set $\{\phi_{a},a\in\mathbb{R}\}\cup\{\psi_{0}\}$.
\end{prop}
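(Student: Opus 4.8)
The plan is to reduce both flows with a single $1$-BIR so that their common orbits become the horizontal lines $y=\mathrm{const}$, and then to quote the classification of level $1$ flows whose vector field has vanishing second coordinate.

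First I would exploit that $\phi$, being a rational flow of level $1$, is an abelian flow of level $1$; by the characterization recalled in the unramified-flows subsection there is then a $1$-BIR $\ell$ with $\ell^{-1}\circ\phi\circ\ell$ having a vector field of the form $\varpi'\bl 0$. For such a field the orbit equation (\ref{w-eq}) collapses to $\mathscr{W}_{x}\,y\varpi'=0$; since $\varpi'\not\equiv 0$ (otherwise the flow is trivial) this forces $\mathscr{W}_{x}=0$, so the $1$-homogeneous orbit function must be $\mathscr{W}=cy$ and the orbits of $\ell^{-1}\circ\phi\circ\ell$ are exactly the lines $y=\mathrm{const}$.

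Next I would observe that the very $\ell$ chosen to reduce $\phi$ also reduces $\psi$, and this is where the hypothesis enters. Because $\phi$ and $\psi$ share orbits, the invariance of orbit-sharing under $1$-BIR conjugation (noted at the start of this section) shows that $\ell^{-1}\circ\phi\circ\ell$ and $\ell^{-1}\circ\psi\circ\ell$ share orbits as well. Hence the orbits of $\ell^{-1}\circ\psi\circ\ell$ are again the lines $y=\mathrm{const}$, its orbit function is once more $cy$, and substituting $\mathscr{W}=y$ into (\ref{w-eq}) gives $y\varrho=0$, i.e. the second coordinate of the vector field of $\ell^{-1}\circ\psi\circ\ell$ vanishes too.

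Finally, both reduced flows carry vector fields of the form $\varpi\bl 0$, so by the Proposition just proved each such $\varpi$ equals $(ax+by)^{2}$ for some real $a,b$. Conjugating by a homothety rescales a $2$-homogeneous vector field by a constant factor (immediate from Proposition \ref{prop1}), so after this normalization $\varpi$ becomes either $(x+ay)^{2}$, giving $\phi_{a}$, or $y^{2}$, giving $\psi_{0}$; thus each of $\ell^{-1}\circ\phi\circ\ell$ and $\ell^{-1}\circ\psi\circ\ell$ lies, up to homothety, in $\{\phi_{a}:a\in\mathbb{R}\}\cup\{\psi_{0}\}$, which is the claim. I expect the only real care to be in the reduction step for $\psi$: verifying that the single $\ell$ built from $\phi$ genuinely annihilates the second coordinate of the field of $\psi$, which is precisely what the shared-orbit hypothesis delivers through (\ref{w-eq}); the homothety bookkeeping is routine.
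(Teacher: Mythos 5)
Your proposal is correct and follows essentially the same route as the paper: reduce $\phi$ by a $1$-BIR to a flow with vanishing second vector-field coordinate (possible exactly for abelian flows of level $1$), note that the shared orbits $y=\mathrm{const}$ force the same vanishing for the conjugate of $\psi$ via (\ref{w-eq}), and then invoke the classification $\varpi=(ax+by)^2$ together with a homothety normalization. The only difference is that you spell out the two directions of the orbit-equation argument explicitly, which the paper leaves implicit.
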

By ``conjugation with a homothety" we mean, as always, a flow $z^{-1}\phi(z\m{x})$.   
\subsection{Level $N\geq 2$} Let $\mathscr{W}$ be equations for the orbit of a rational flow of level $N\geq 2$. We know (\cite{alkauskas}, Corollary 1) that there exists two homogeneous polynomials $P$ and $Q$ of the same degree and a non-degenerate linear map $L$ such that
\begin{eqnarray}
\mathscr{W}(x,y)=\frac{P^{N}(x,y)}{Q^{N}(x,y)}xy^{N-1}\circ L(x,y).
\label{eq-orbits}
\end{eqnarray}
Let, again, $\phi$ and $\psi$ be two flows that share the same orbits. We can choose $1$-BIR $\ell$ such that $\ell^{-1}\circ\phi\circ\ell$ is a flow in a canonic form (\ref{can}). Let the orbits of $\ell^{-1}\circ\phi\circ\ell$ be then given by (\ref{eq-orbits}). Thus, we have
\begin{eqnarray*}
xy^{N-1}=\frac{P^{N}}{Q^{N}}TL^{N-1}\Longrightarrow \frac{xy^{N-1}}{TL^{N-1}}=\frac{P^{N}}{Q^{N}}.
\end{eqnarray*} 
where $T,L$ are certain linear forms, $\frac{T}{L}\neq\mathrm{const.}$ There are two cases two consider.\\

 If $N\geq 3$, then the above implies that $T=cx$, $L=dy$, $P,Q$ are constants, and the flows, up to conjuagtion with a diagonal linear map, coincide.\\

If $N=2$, we can also have $T=cy$, $L=dx$, $P,Q$ are constants, and the flow $\ell^{-1}\circ\psi\circ\ell$ is obtained from $\ell^{-1}\circ\phi\circ\ell$  by conjugation with an involution $i_{0}(x,y)=(y,x)$. Thus, we have proved

\begin{prop}If flows $\phi$ and $\psi$ of level $N \geq 3$ have the same orbits, then there exists a $1$-BIR $\ell$ such that both $\ell^{-1}\circ\phi\circ\ell$ and $\ell^{-1}\circ\psi\circ\ell$ are flows of the form
\begin{eqnarray*}
\phi^{c}_{N}(\m{x})=x(cy+1)^{N-1}\bl\frac{y}{cy+1},\quad c\in\mathbb{R}\setminus\{0\}.
\end{eqnarray*}
  If flows of level $2$ have the same orbits, there exists a $1$-BIR $\ell$ such that both $\ell^{-1}\circ\phi\circ\ell$  and $\ell^{-1}\circ\psi\circ\ell$ are flows of the form
\begin{eqnarray*}
\text{either }\phi^{c}_{2}(\m{x})=x(cy+1)\bl\frac{y}{cy+1}\quad\text{or }\widehat{\phi^{d}_{2}}(\m{x})=\frac{x}{dx+1}\bl y(dx+1),\quad c,d\in\mathbb{R}\setminus\{0\}.
\end{eqnarray*}   
\end{prop}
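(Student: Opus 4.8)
The plan is to reduce the whole statement to a single divisibility condition on $\mathbb{P}^{1}$. Both the level and the relation ``$\phi$ and $\psi$ share orbits'' are invariant under conjugation by a $1$-BIR and by a linear map (as noted at the start of the section, this follows from Proposition \ref{prop1}), so I may replace $(\phi,\psi)$ by $(\ell^{-1}\circ\phi\circ\ell,\ell^{-1}\circ\psi\circ\ell)$ at will. First I would use Theorem \ref{mthm} to pick $\ell$ with $\ell^{-1}\circ\phi\circ\ell=\phi_{N}$, the canonical flow (\ref{can}); substituting its vector field $(N-1)xy\bl -y^{2}$ into (\ref{w-eq}) shows its orbits are $xy^{N-1}=\mathrm{const}$. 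By Theorem \ref{mthm} the second flow is $\ell^{-1}\circ\psi\circ\ell=m^{-1}\circ\phi_{N}\circ m$ for some $1$-BIR $m$, and since two $1$-homogeneous orbit functions with the same level sets are proportional (a one-line homogeneity argument), the shared-orbit hypothesis becomes the single equation $xy^{N-1}\circ m=c\cdot xy^{N-1}$.

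I would then invoke \cite{alkauskas}, Corollary 1, which expresses the orbit function of a level-$N$ flow in the shape (\ref{eq-orbits}); with the two independent linear forms it produces written as $T$ and $L$ (so $T/L\neq\mathrm{const}$), the previous equation is rewritten as
\[
\frac{xy^{N-1}}{TL^{N-1}}=\Big(\frac{P}{Q}\Big)^{N},
\]
i.e.\ the left-hand side must be an \emph{exact $N$-th power} of a rational function. The core of the proof is to decide when this happens. I would read the left-hand side as a degree-$0$ rational function on $\mathbb{P}^{1}$ with slope coordinate $[x:y]$; its divisor is $[x{=}0]+(N-1)[y{=}0]-[T{=}0]-(N-1)[L{=}0]$, of total degree $0$, and being an $N$-th power forces every multiplicity, after coincident points are merged, to be divisible by $N$. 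Since $x,y$ are independent and $T,L$ are independent, only finitely many coincidence patterns occur, and I would tabulate them.

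This tabulation is where the two regimes separate, and it is the step I expect to be the real obstacle; everything around it is formal. The simple zero at $[x{=}0]$ can be neutralised only by a coincident pole, and the order-$(N-1)$ zero at $[y{=}0]$ only by a matching pole of order $N-1$; running through the cases, for $N\ge 3$ the unique admissible pattern is $T\propto x$, $L\propto y$ with $P/Q$ constant, whereas the competing pattern $T\propto y$, $L\propto x$ demands $N\mid 2$ and so is available only for $N=2$. Translated back through $m$, the first pattern says $m$ is a diagonal linear map and the second (for $N=2$) that $m$ is the swap $i_{0}$ up to a diagonal map. It remains to apply these to $\phi_{N}$: a direct computation gives $D^{-1}\circ\phi^{c}_{N}\circ D=\phi^{c\mu}_{N}$ for $D=\mathrm{diag}(\lambda,\mu)$, so a diagonal $m$ keeps $\ell^{-1}\circ\psi\circ\ell$ inside the family $\{\phi^{c}_{N}\}$, while $i_{0}\circ\phi^{c}_{2}\circ i_{0}=\widehat{\phi^{c}_{2}}$ produces the alternative form $\widehat{\phi^{d}_{2}}$ in the level-$2$ case. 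Assembling the two regimes gives the dichotomy, the single $\ell$ of the first reduction serving for both $\phi$ and $\psi$.
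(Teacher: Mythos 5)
Your proposal is correct and takes essentially the same route as the paper: reduce $\phi$ to the canonical form $\phi_{N}$ via Theorem \ref{mthm}, express the orbit function of the conjugated $\psi$ in the shape (\ref{eq-orbits}), and match it against $xy^{N-1}$. Your divisor tabulation on $\mathbb{P}^{1}$ merely makes explicit the step the paper asserts without detail (that $T\propto x$, $L\propto y$ with $P/Q$ constant is forced for $N\geq 3$, the swapped pattern surviving only when $N\mid 2$), and your closing diagonal-conjugation and $i_{0}$-conjugation computations coincide with the paper's conclusion.
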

The flows $\phi^{c}_{2}$ and $\widehat{\phi^{d}_{2}}$ both have hyperbolas $xy=\mathrm{const}.$ as their orbits.
\begin{Example} Let $\ell(x,y)=i(x,y)=\frac{y^2}{x}\bl y$. The flows $i\circ\phi^{2}_{N}\circ i$, $i\circ\phi_{N}\circ i$ and $i\circ\widehat{\phi_{N}}\circ i$ are given by
\begin{eqnarray*}
\frac{x}{(y+1)^{3}}\bl\frac{y}{y+1},\quad \frac{x}{(2y+1)^{3}}\bl\frac{y}{2y+1},\quad \frac{(y^2+x)^{3}}{x^2}\bl \frac{y(y^2+x)}{x}
\end{eqnarray*}
(the last flow is denoted by $\phi^{(2)}_{1}(\m{x})$ in \cite{alkauskas}, see p. 288 and Table 1), and all have orbits $\frac{y^3}{x}=\mathrm{const}$.
\end{Example}
Example given in the beginning of this subsection shows that there are much more interesting phenomena among algebraic flows. This question, also question of solenoidal, symmetric algebraic flows, among others,  will be treated in \cite{alkauskas-ab2}. \\

In a similar analysis we could give all rational plane flows with orthogonal orbits. We do not give the complete solution, but rather confine to two examples. 

\begin{Example}
Two rational flows of level $1$, with vector fields $\varpi\bl 0$ and $0\bl\varrho$ have, of course, orthogonal orbits. For example, such two flows are $x+y^2\bl y$ and $x\bl\frac{y}{y+1}$.
\end{Example}
\begin{Example} Consider the flow (see \cite{alkauskas-0}, also \cite{alkauskas}, Subsection 2.3, and further Table 1 on p. 290)
\begin{eqnarray*}
\phi_{\mathrm{sph},1}=\frac{x^2+y^2+2x}{(x+1)^2+(y+1)^2}\bl\frac{x^2+y^2+2y}{(x+1)^2+(y+1)^2} 
\end{eqnarray*}
with the vector field $\varpi\bl\varrho=-\frac{1}{2}x^2+\frac{1}{2}y^2-xy\bl\frac{1}{2}x^2-\frac{1}{2}y^2-xy$, and orbits $\frac{x^2+y^2}{x-y}=\mathrm{const}.$. These are shown in \cite{alkauskas}, Figure 1. These orbits are just circles $(x-a)^2+(y+a)^2=2a^2$.\\

Consider the vector field $\alpha\bl\beta=\frac{1}{2}x^2-\frac{1}{2}y^2-xy\bl \frac{1}{2}x^2-\frac{1}{2}y^2+xy$. Let $i_{1}$ be an involution $i_{1}(x,y)=(-x,y)$. Since $\alpha\bl\beta=i_{1}\circ(\varpi\bl\varrho)\circ i_{1}$, the integral of $\alpha\bl\beta$ is the flow
\begin{eqnarray*}
\frac{-x^2-y^2+2x}{(x-1)^2+(y+1)^2}\bl\frac{x^2+y^2+2y}{(x-1)^2+(y+1)^2} 
\end{eqnarray*}
with orbits $\frac{x^2+y^2}{x+y}=\mathrm{const}.$; these are just circles $(x-a)^2+(y-a)^2=2a^2$. Thus, these two flows have mutually orthogonal orbits.
\end{Example}

\end{document}